\numberwithin{equation}{section}
\theoremstyle{plain}
\newtheorem{thm}{Theorem}[section]
\newtheorem{theorem}[thm]{Theorem}
\newtheorem{lemma}[thm]{Lemma}
\newtheorem{prop}[thm]{Proposition}
\newtheorem{cor}[thm]{Corollary}
\newcommand\theoref{Theorem~\ref}
\newcommand\lemref{Lemma~\ref}
\newcommand\propref{Proposition~\ref}
\newcommand\corref{Corollary~\ref}
\newcommand\defref{Definition~\ref}
\newcommand\Zp{\Z_p}
\theoremstyle{definition}
\newtheorem{definition}[thm]{Definition}
\newtheorem{rem}[thm]{Remark}
\newtheorem{remark}[thm]{Remark}
\newtheorem{ex}[thm]{Example}
\newtheorem{question}[thm]{Question}
\DeclareMathOperator{\sys}{{\rm sys}}
\DeclareMathOperator{\sysh}{{\rm sysh}}
\DeclareMathOperator{\syscat}{{\rm cat_{sys}}}
\DeclareMathOperator{\cat}{{\mbox{\rm cat$_{\rm LS}$}}}
\def\Im{\protect\operatorname{Im}}
\def\Hom{\protect\operatorname{Hom}}
\def\wgt{\protect\operatorname{wgt}}
\def\cwgt{\protect\operatorname{cwgt}}
\def\swgt{\protect\operatorname{swgt}}
\def\cd{\protect\operatorname{cd}}
\newcommand \pa[2]{\frac{\partial #1}{\partial #2}}
\def\gf{\varphi}
\def\ga{\alpha}
\def\gb{\beta}
\def\scr{\mathcal}
\def\A{{\scr A}}
\def\B{{\scr B}}
\def\O{{\scr O}}
\def\C{{\mathbb C}}
\def\Z{{\mathbb Z}}
\def\R{{\mathbb R}}
\def\N{{\mathbb N}}
\def\ra{{\R A}}
\def\zp{\Z[\pi]}
\def\1{\hbox{\rm\rlap {1}\hskip.03in{\rom I}}}
\def\Bbbone{{\rm1\mathchoice{\kern-0.25em}{\kern-0.25em}
{\kern-0.2em}{\kern-0.2em}I}}
\def\pa{\partial}
\def\wt{\widetilde}
\def\m{\medskip}
\def\ov{\overline}
\def\la{\langle}
\def\ra{\rangle}
\long\def\forget#1\forgotten{} %
\newcommand{\gmetric}{{\mathcal G}}
\newcommand{\regular}{{B}}
\DeclareMathOperator{\stsys}{{\rm stsys}}
\DeclareMathOperator{\pisys}{{\rm sys}\pi}
\DeclareMathOperator{\vol}{{\rm vol}}
\newcommand\ver[1]{\marginpar{\tiny Changed in Ver \VER}}
\newcommand\T {{\mathbb T}}
\newcommand\AJ {{\mathcal A}}
\DeclareMathOperator{\fmanifold}{{[\overline{\it F}_{\it \! M}]}}
\newcommand\manbar {{\overline{{M}}}}
\newcommand{\surface}{\Sigma}
\date{\today}
\begin{document}

\title[Small values of LS and systolic categories for manifolds]{Small
values of Lusternik-Schnirelmann and systolic categories for
manifolds}

\author[A.~Dranishnikov]{Alexander N. Dranishnikov$^{1}$} %
\thanks{$^{1}$Supported by NSF, grant DMS-0604494}

\author[M.~Katz]{Mikhail G. Katz$^{2}$} %
\thanks{$^{2}$Supported by the Israel Science Foundation (grants
no.~84/03 and 1294/06)}

\author[Yu.~Rudyak]{Yuli B. Rudyak$^{3}$}%
\thanks{$^{3}$Supported by NSF, grant 0406311}

\address{Alexander N. Dranishnikov, Department of Mathematics, University
of Florida, 358 Little Hall, Gainesville, FL 32611-8105, USA}
\email{dranish@math.ufl.edu}

\address{Mikhail G. Katz, Department of Mathematics, Bar Ilan University,
Ramat Gan 52900 Israel}
\email{katzmik@math.biu.ac.il}

\address{Yuli B. Rudyak, Department of Mathematics, University
of Florida, 358 Little Hall, Gainesville, FL 32611-8105, USA}
\email{rudyak@math.ufl.edu}

\subjclass[2000]
{Primary 53C23;  
Secondary  55M30, 
57N65  
}

\keywords{Category weight, cohomological dimension, detecting element,
essential manifolds, free fundamental group, Lusternik-Schnirelmann
category, Massey product, self-linking class, systolic category}

\begin{abstract}
We prove that manifolds of Lusternik-Schnirelmann category~$2$
necessarily have free fundamental group.  We thus settle a 1992
conjecture of Gomez-Larra\~naga and Gonzalez-Acu\~na, by generalizing
their result in dimension~$3$, to all higher dimensions.  We examine
its ramifications in systolic topology, and provide a sufficient
condition for ensuring a lower bound of~$3$ for systolic category.
\end{abstract}

\maketitle
\tableofcontents

\section{Introduction}

We follow the normalization of the Lusternik-Schnirelmann category (LS
category) used in the recent monograph \cite{CLOT} (see
Section~\ref{three} for a definition).  Spaces of LS category~$0$ are
contractible, while a closed manifold of LS category~$1$ is homotopy
equivalent (and hence homeomorphic) to a sphere.

The characterization of closed manifolds of LS category~$2$ was
initiated in 1992 by J.~Gomez-Larra\~naga and F.~Gonzalez-Acu\~na
\cite{GoGo} (see also~\cite{OR}), who proved the following result on
closed manifolds~$M$ of dimension~$3$.  Namely, the fundamental group
of~$M$ is free and nontrivial if and only if its LS category is~$2$.
Furthermore, they conjectured that the fundamental group of every
closed~$n$-manifold,~$n\ge 3$, of LS category~$2$ is necessarily
free~\cite[Remark, p.~797]{GoGo}.  Our interest in this natural
problem was stimulated in part by recent work on the comparison of the
LS category and the systolic category \cite{KR1, KR2, SGT}, which was
inspired, in turn, by M.~Gromov's systolic inequalities \cite{Gr1,
Gr2, Gr3, Gr4}.

In the present text we prove this 1992 conjecture.  Recall that all
closed surfaces different from~$S^2$ are of LS category 2.

\begin{theorem}
\label{11}
A closed connected manifold of LS category~$2$ either is a surface, or
has free fundamental group.
\end{theorem}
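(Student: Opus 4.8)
The plan is to dispose of low dimensions first and then argue by contradiction. If $\dim M\le 2$, then since a point, $S^1$ and $S^2$ have $\cat$ equal to $0,1,1$ respectively, the hypothesis $\cat M=2$ forces $\dim M=2$ with $M\ne S^2$, so $M$ is a closed surface and the first alternative holds. Assume therefore $n:=\dim M\ge 3$ and suppose, for contradiction, that $\pi:=\pi_1(M)$ is \emph{not} free. By the Stallings--Swan theorem $\cd_{\Z}\pi\ge 2$, and since non-freeness of a (finitely presented) group is already detected by its cohomological dimension over $\mathbb{F}_2$ (Dunwoody), one obtains an $\mathbb{F}_2[\pi]$-module $V$ together with a class $0\ne\beta\in H^2(\pi;V)=H^2(B\pi;V)$.

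Next I would transplant $\beta$ to $M$ and measure its category weight. The classifying map $c\colon M\to B\pi$ has homotopy fibre the universal cover $\wt M$, which is simply connected, so $c$ is $2$-connected and $c^*\colon H^2(B\pi;V)\to H^2(M;V)$ is injective; set $u:=c^*\beta\ne 0$. The crucial estimate is $\swgt(u)\ge 2$: any CW space $A$ with $\cat A\le 1$ is a co-$H$-space, hence has free fundamental group, so every map $A\to B\pi$ factors up to homotopy through $B\pi_1(A)$, which is a graph and thus has $H^{\ge 2}(\,\cdot\,;-)=0$; applying this to the composites $A\to M\xrightarrow{\,c\,}B\pi$ shows that $u$ restricts to zero on every subspace of $\cat$ at most $1$, i.e.\ $\swgt(u)\ge 2$.

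To conclude I would promote $u$ to a cup product of weight at least $3$ by Poincaré duality, working with $\mathbb{F}_2$ coefficients so that $M$ is automatically orientable. The fundamental class $[M]\in H_n(M;\mathbb{F}_2)$ furnishes a non-degenerate pairing $H^2(M;V)\otimes H^{n-2}(M;V^*)\to H^n(M;\mathbb{F}_2)=\mathbb{F}_2$, with $V^*=\Hom_{\mathbb{F}_2}(V,\mathbb{F}_2)$: Poincaré duality identifies $H^2(M;V)$ with $H_{n-2}(M;V)$, and over the field $\mathbb{F}_2$ the universal coefficient theorem identifies $H^{n-2}(M;V^*)$ with the dual vector space of $H_{n-2}(M;V)$. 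Hence there is $v\in H^{n-2}(M;V^*)$ with $u\cup v\ne 0$; since $n\ge 3$, the class $v$ has positive degree, so $\swgt(v)\ge 1$. By superadditivity of strict category weight under cup products and the fact that $\swgt$ bounds $\cat$ from below, $\cat M\ge\swgt(u\cup v)\ge\swgt(u)+\swgt(v)\ge 3$, contradicting $\cat M=2$. Therefore $\pi_1(M)$ is free.

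I expect the two delicate points to be, first, the weight estimate $\swgt(u)\ge 2$, which rests on the classical fact that connected co-$H$-spaces have free fundamental group together with the standard but non-formal properties of strict category weight (homotopy invariance, superadditivity, and the lower bound for $\cat$); and second, arranging the coefficients so that Poincaré duality delivers an honest non-degenerate pairing detecting $u$, which is precisely why one passes from $\Z$ to $\mathbb{F}_2$ coefficients. The hypothesis $n\ge 3$ is indispensable: for a surface the would-be complementary class $v$ would live in degree $0$, where $\swgt$ may vanish, and the argument correctly breaks down — surfaces other than $S^2$ do have $\cat=2$ with non-free $\pi_1$.
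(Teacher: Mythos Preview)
Your overall architecture matches the paper's: pull back a degree~$2$ class from $K(\pi,1)$, show it has category weight~$\ge 2$, and cup with a complementary class produced by Poincar\'e duality. But the passage to $\mathbb{F}_2$ coefficients is a genuine error. The assertion ``non-freeness of a finitely presented group is detected by $\cd_{\mathbb{F}_2}$'' is false: by Dunwoody's theorem, $\cd_{\mathbb{F}_2}\pi\le 1$ holds precisely when $\pi$ is the fundamental group of a graph of finite groups of \emph{odd} order, so for instance $\pi=\Z/3$ (or $\Z/3*\Z/5$, etc.) has $\cd_{\mathbb{F}_2}\pi=0$ yet is not free. For such $\pi$ there is no nonzero $\beta\in H^2(\pi;V)$ with $V$ an $\mathbb{F}_2[\pi]$-module, and your argument stalls at the first step. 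Stallings--Swan gives only $\cd_{\Z}\pi\ge 2$, so the coefficient module $A$ one obtains is a $\Z[\pi]$-module, not an $\mathbb{F}_2[\pi]$-module.

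This is exactly the reason the paper does \emph{not} pass to a field. Working over $\Z$, the clean pairing you invoke is unavailable: with integral local coefficients there is no universal-coefficient identification of $H^{n-2}(M;A^*)$ with the dual of $H_{n-2}(M;A)$, and Ext obstructions can intervene. The paper circumvents this with \propref{l:evaluation}, which constructs, for any nonzero class $a\in H_{n-2}(M;\mathcal{A})$, a local system $\mathcal{B}$ (with stalk $C_{n-2}(\widetilde M)/\Im\partial$) and a class $v\in H^{n-2}(M;\mathcal{B})$ with $a\cap v\ne 0$. That construction is the missing ingredient once you are forced back to $\Z[\pi]$-coefficients.

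A smaller point: your weight estimate argues via spaces $A$ with $\cat A\le 1$, but the strict category weight in \defref{def:swgt} is defined through maps $\varphi$ with $\cat(\varphi)\le 1$, which is a weaker hypothesis on the domain. The paper instead invokes \propref{prop:swgtprops}(4), namely $\wgt(u)\ge s$ for any $u\in H^s(K(\pi,1);\mathcal{A})$, together with naturality under $f^*$; this gives $\wgt(f^*u)\ge 2$ directly and avoids the co-$H$-space detour.
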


\begin{cor}
\label{12b}
Every manifold~$M^n, n\geq 3$, with non-free fundamental group
satisfies~$\cat(M)\geq 3$.
\end{cor}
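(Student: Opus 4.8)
The plan is to read off Corollary~\ref{12b} directly from Theorem~\ref{11} together with the classification of manifolds of LS category~$0$ and~$1$ recalled in the introduction; essentially no new argument is needed. Here $M=M^n$ is taken to be closed connected, exactly as in Theorem~\ref{11}. This hypothesis cannot be dropped: for instance $\surface\times\R$, with $\surface$ a closed aspherical surface, is an open $3$-manifold with non-free fundamental group and LS category~$2$, so some restriction on $M$ at infinity is genuinely needed.

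I would argue by contradiction. Assume $\pi_1(M)$ is not free and $\cat(M)\le 2$, and split into the three cases $\cat(M)=0,1,2$. If $\cat(M)=0$ then $M$ is contractible, so $\pi_1(M)=1$ is free. If $\cat(M)=1$ then, as recalled above, $M$ is homotopy equivalent to $S^n$; since $n\ge 3$ this gives $\pi_1(M)=1$, again free. If $\cat(M)=2$ then Theorem~\ref{11} forces $M$ to be either a surface or to have free fundamental group, and since $n\ge 3$ the first alternative is excluded, so $\pi_1(M)$ is free. Each case contradicts the assumption, whence $\cat(M)\ge 3$.

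The only step carrying any content is the invocation of Theorem~\ref{11}; the rest is bookkeeping over the small values of the category. Consequently there is no real obstacle in the proof of the corollary itself — the entire difficulty sits upstream, in Theorem~\ref{11}. The one point to keep straight is the standing assumption that $M$ be closed, so that both Theorem~\ref{11} and the $\cat\le 1$ classification apply.
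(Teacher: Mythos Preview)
Your derivation is correct: once Theorem~\ref{11} is granted, the cases $\cat(M)=0,1,2$ are disposed of exactly as you say, and the standing hypothesis that $M$ be closed is indeed what makes the $\cat\le 1$ classification and Theorem~\ref{11} applicable.

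One point of comparison with the paper's internal logic is worth noting. Although the introduction presents this statement as a corollary of Theorem~\ref{11}, in the body of the paper the implication actually runs the other way: the statement you are proving is restated verbatim as \theoref{th:main} and given a direct proof (non-free $\pi_1$ $\Rightarrow$ a nontrivial degree-$2$ class of category weight~$2$, then cup with a Poincar\'e-dual class to force $\cat\ge 3$), and Theorem~\ref{11} is then a repackaging of that result together with the surface case. So your route---reducing to Theorem~\ref{11} and handling $\cat\le 1$ separately---is logically sound but mildly circular relative to the paper's actual order of proof; the paper never needs the $\cat=0,1$ case split because \theoref{th:main} yields $\cat\ge 3$ outright.
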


\begin{theorem}
\label{t:realization}
Given a finitely presented group~$\pi$ and non-negative integer
numbers~$k, l$, there exists a closed manifold~$M$ such
that~$\pi_1(M)=\pi$, while~$\cat M=3+k$ and~$\dim M=5+2k+l$.
Furthermore, if~$\pi$ is not free than~$M$ can be chosen
$4$-dimensional with~$\cat M=3$.
\end{theorem}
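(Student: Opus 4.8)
The plan is to obtain $M$ as a product $M=P\times\C P^{k}$, where $P=P(\pi,l)$ is a conveniently chosen closed $(5+l)$--manifold with $\pi_{1}(P)=\pi$ and $\cat P=3$. Then $\pi_{1}(M)=\pi$ and $\dim M=(5+l)+2k=5+2k+l$, and since always $\cat M\le\cat P+k=3+k$ it remains only to see $\cat M\ge3+k$. So the theorem reduces to the following local assertion: for every finitely presented $\pi$ and every $m\ge5$ there is a closed $m$--manifold $P$ with $\pi_{1}(P)=\pi$, with $\cat P=3$, carrying moreover a cohomological ``detecting gadget'' (a cohomology class with possibly twisted coefficients, or a triple Massey product of one--dimensional classes) of strict category weight $3$ which stays nonzero after multiplication by the generator $t$ of $H^{2}(\C P^{k})$; granting this, the product inequality for strict category weight gives $\cat M\ge3+k$, hence $=3+k$.

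When $\pi=F_{g}$ is free one writes $P$ down explicitly, e.g. $P=\bigl(\#^{g}(S^{1}\times S^{a})\bigr)\times S^{b}$ with $a,b\ge1$ and $a+b=m-1$ (and a product of three spheres when $\pi$ is trivial): a nonzero triple cup product of classes in degrees $1,a,b$ shows $\cat P\ge3$, hence $=3$, and it survives cupping with $t^{k}$, giving $\cat(P\times\C P^{k})\ge3+k$. When $\pi$ is not free, the Stallings--Swan theorem gives $\cd\pi\ge2$. I would take a finite presentation $2$--complex $L$ of $\pi$, embed it in $S^{m+1}$, let $W$ be a regular neighbourhood, and set $P=\partial W$. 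Since $L$ has codimension $m-1\ge4\ge3$ in $W$, the inclusion induces an isomorphism $\pi_{1}(P)\xrightarrow{\cong}\pi_{1}(W)=\pi$, and since $\dim P=m\ge3$ and $\pi$ is not free, \corref{12b} already yields $\cat P\ge3$.

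The crucial step --- and the one I expect to be the main obstacle --- is the reverse inequality $\cat(\partial W)\le3$. Here one uses the spine picture $\partial W\simeq W\smallsetminus L$ together with the complementary region $C=S^{m+1}\smallsetminus\operatorname{int}W$: by Alexander duality $C$ is $(m-3)$--connected with the homotopy type of a complex having cells only in dimensions $0$, $m-2$, $m-1$, so $\cat C\le1$; playing off the handle decomposition of $W$ (handles of index $\le2$, as $W\searrow L$) against the dual handle decomposition of $C$, one should assemble a cover of $\partial W$ by $\cat L+1\le3$ open sets that are null--homotopic in $\partial W$. What is really wanted is the lemma that the boundary of a regular neighbourhood, in a high--dimensional sphere, of a finite complex $K$ has $\cat\le\cat K+1$. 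The difficulty is that the naive argument --- restrict a categorical cover of $K$ to a collar of $\partial W$ --- already fails in low codimension: the Mazur manifold is a contractible $4$--manifold whose boundary is a homology $3$--sphere with non--free fundamental group, hence of category $3$. So the large codimension must be used essentially, compressing the contracting homotopies and killing the resulting obstructions, which live in degrees $\ge m-2$ with values in the highly connected links of the cells of $L$, by a dimension count.

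It remains to produce the detecting gadget. Because $L\hookrightarrow B\pi$ is $2$--connected and $\partial W\to W$ is $(m-2)$--connected, the classifying map $f\colon P\to B\pi$ induces an isomorphism $H^{2}(B\pi;A)\to H^{2}(P;A)$ for every coefficient module; hence the square $\beta_{P}^{2}=f^{*}\beta_{B\pi}^{2}$ of the Berstein--Schwarz class is nonzero (as $\cd\pi\ge2$) and of strict category weight $\ge2$. To raise this to weight $3$ one arranges, by putting enough $2$--cells into $L$, that a class $\gamma$ Poincar\'e dual to a $2$--cycle of $P$ coming from $\pi_{2}(L)$ satisfies $\beta_{P}^{2}\cup\gamma\ne0$; the product has weight $\ge3$, and cupping with $t^{k}$ keeps it nonzero by the K\"unneth formula and raises the weight to $3+k$. (When $H^{1}$ of $\pi$ is too small for this, e.g. $\pi$ perfect, one argues with a suitable twisted--coefficient Massey product of one--dimensional classes, or with the self--linking class, in the same spirit; this case needs extra care.) Finally, the addendum is the easy case: for $\pi$ not free, build a closed $4$--manifold $P$ with $\pi_{1}(P)=\pi$ bounding a compact $5$--manifold $W$ with $\pi_{1}(W)\cong\pi$ induced by the inclusion (start from $\natural^{g}(S^{1}\times D^{4})$ and attach a $2$--handle along a framed loop representing each relator, stabilising to absorb the framing obstruction when necessary); then $P$ is inessential, so $\cat P\le3$ because a closed $4$--manifold of category $4$ is essential, while \corref{12b} gives $\cat P\ge3$, whence $\cat P=3$.
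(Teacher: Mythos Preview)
Your overall plan --- build a closed $(5+l)$-manifold $P$ with $\pi_1(P)=\pi$, $\cat P=3$, and a detecting element of weight $3$, then take a product with a $2k$-dimensional space of category $k$ --- is exactly the paper's strategy (the paper uses $(S^2)^k$ rather than $\C P^k$, which is immaterial). Moreover the manifold you propose, $\partial W$ for $W$ a regular neighbourhood of a presentation $2$-complex $L\subset S^{m+1}$, is the \emph{same} manifold the paper constructs: the regular neighbourhood of the $1$-skeleton has boundary $\#^s(S^1\times S^{m-1})$, and adding the $2$-cells of $L$ performs surgery on the relator circles, which is precisely the paper's description. So the construction coincides; the divergence is in how $\cat P\le 3$ is proved.

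You correctly flag this upper bound as the main obstacle, and your cover-assembly sketch is left unfinished. The paper bypasses a direct covering argument entirely. It works with the surgery trace $X$ (the cobordism from $M'=\#^s(S^1\times S^{m-1})$ to $P$), observes that $X$ is $M'\times I$ with $2$-handles and hence $\cat X\le \cat M'+1\le 3$, and then shows that the Ganea--\v{S}varc fibration $p_3^P:G_3(P)\to P$ has a section by comparing it with $p_3^X$ pulled back along $P\hookrightarrow X$. The point is that dually $X$ is $P\times I$ with $(m-1)$-handles, so $P\hookrightarrow X$ is highly connected; hence $\Omega P\to\Omega X$ is highly connected, and the fourfold join of fibres $(\Omega P)^{\ast 4}\to(\Omega X)^{\ast 4}$ is $(m-1)$-connected by a join/connectivity count (\propref{p:join}). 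A Five-Lemma and obstruction argument then lifts the section of $p_3^X$ (which exists because $\cat X\le 3$) to one of $p_3^P$. This is how the paper circumvents the geometric difficulty you describe.

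There is a second gap in your detecting-element step. Your assertion that $\beta_\pi^2\ne 0$ whenever $\cd_\Z\pi\ge 2$ is exactly what the paper singles out as unknown in the remark following \lemref{l:cd}: the Eilenberg--Ganea argument gives $\beta_\pi^n\ne 0$ only for $n=\cd_\Z\pi\ge 3$. The paper avoids this by taking as detecting element the class $f^*u\cup v$ from the proof of \theoref{th:main}, where $u\in H^2(K(\pi,1);\A)$ is supplied by \lemref{l:cd} for \emph{some} coefficient module and $v$ by \propref{l:evaluation}; this has weight $\ge 3$ for every non-free $\pi$, and one then iterates $\cat(N\times S^2)=\cat N+1$. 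Your treatment of the $4$-dimensional addendum is correct and agrees with \propref{p:nonfree}.
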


\begin{rem}
Our Theorem~\ref{t:realization} indicates that Theorem~\ref{11},
namely our generalization of \cite{GoGo}, in fact optimally clarifies
their result, to the extent that we identify the precise nature of the
effect of the fundamental group on the LS category of manifolds.
\end{rem}

The above results lead to the following questions:

\begin{question}
If~$M^4$ has free fundamental group, then we have the bound~$\cat M
\leq 3$.  Is it true that~$\cat M \leq 2$?
\end{question}

On the side of systolic category (see below), the following result is
immediate from the structure of the spaces $K(\Z,1)$ and $K(\Z,2)$,
cf.~Theorem~\ref{64}.

\begin{theorem}
An $n$-manifold with free fundamental group and torsion-free $H_2$ is
of systolic category at most $n-2$.
\end{theorem}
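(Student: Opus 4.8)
The plan is to read this bound off from Theorem~\ref{64} below, whose hypotheses hold here once one unwinds the homotopy types of $K(\Z,1)$ and $K(\Z,2)$. Write $\pi=\pi_1(M)$. Since $\pi$ is free, $H_1(M;\Z)\cong\pi_{\mathrm{ab}}$ is a finitely generated free abelian group, so $\operatorname{Ext}(H_1(M;\Z),\Z)=0$ and the universal coefficient theorem gives $H^2(M;\Z)\cong\Hom(H_2(M;\Z),\Z)$; with $H_2(M;\Z)$ torsion-free, it follows that $H^1(M;\Z)$ and $H^2(M;\Z)$ are free abelian of finite rank $r$ and $s$, with no $p$-torsion in cohomology in degrees $\le 2$. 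Combining the classifying map $M\to K(\pi,1)$ with maps $M\to K(\Z,2)$ realizing a basis of $H^2(M;\Z)$, I obtain
$$
\Phi : M \longrightarrow K(\pi,1)\times\prod_{j=1}^{s}K(\Z,2),
$$
a map whose target is built solely from copies of $K(\Z,1)$ and $K(\Z,2)$ and whose first factor is one-dimensional; this is the situation to which Theorem~\ref{64} applies, the two hypotheses on $M$ having been used precisely to make $K(\pi,1)$ one-dimensional and to make the degree-$2$ classes of $M$ visible through $K(\Z,2)=\C P^\infty$ rather than through any $K(\Z/p,2)$.

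Applying Theorem~\ref{64} to $\Phi$, the structure of the two Eilenberg--MacLane spaces now does the work. A factor $K(\Z,1)=S^1$ is a closed $1$-manifold, and since $H^{\ge 2}$ of a wedge of circles vanishes, no two degree-$1$ cohomology classes of $M$ have a nonzero product; so the $K(\pi,1)$-direction of $\Phi$ contributes at most one systolic factor, of dimension $1$. The space $K(\Z,2)=\C P^\infty$ has torsion-free cohomology, namely the polynomial ring on a single degree-$2$ generator, with finite skeleta the $2m$-dimensional projective spaces $\C P^{m}$ of systolic category $m$; hence the $K(\Z,2)$-direction of $\Phi$ feeds into $\syscat M$ only through stable systolic information, and any non-stable systolic factor it forces occupies at least two of the $n$ available dimensions. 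Tracking these two facts through the bookkeeping of Theorem~\ref{64} leaves room for at most $n-2$ admissible systolic factors, i.e.\ $\syscat M\le n-2$; the shortfall of $2$ is the cost of the single $K(\Z,2)$-type factor that the one-dimensional $K(\pi,1)$ cannot replace in accounting for the top-dimensional part of $[M]$.

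The step I expect to be the real obstacle is checking that $\Phi$ genuinely meets the hypotheses of Theorem~\ref{64}, and in particular confirming that torsion-freeness of $H_2(M;\Z)$ is exactly what prevents a $K(\Z/p,2)$-factor from entering the target: such a factor would supply a bona fide non-stable $2$-systolic inequality and push the estimate up to $n-1$ --- as happens for the nonorientable $S^2$-bundle over $S^1$, whose second homology is $\Z/2$ --- whereas the free part of $H^2(M;\Z)$, detectable only through $\C P^\infty$, contributes merely stable systolic data and does not raise $\syscat$. Pinning down this dichotomy between $K(\Z,2)$ and $K(\Z/p,2)$ within the machinery of Theorem~\ref{64} is the heart of the matter.
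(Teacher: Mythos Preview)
Your proposal is circular: the statement you are asked to prove \emph{is} Theorem~\ref{64} (the introductory formulation simply omits the clause $n\ge 4$, which is implicit since for $n\le 3$ the claim is either vacuous or handled separately).  There is no separate tool ``Theorem~\ref{64}'' that takes a map $\Phi$ as input; the construction of $\Phi$ is the \emph{first step} of the paper's proof of Theorem~\ref{64}, not a reduction to it.

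Setting aside the circularity, what you have written after building $\Phi$ is not a proof.  Systolic category is not a cup-length invariant: to show $\syscat(M)\le n-2$ you must rule out every systolic inequality of the shape $\prod_i \sys_{k_i}(\gmetric)\le C\,\vol_n(\gmetric)$ with $\max_i k_i\le 2$, and this requires exhibiting \emph{metrics} that violate any such inequality.  Your sentences about ``the $K(\pi,1)$-direction contributing at most one systolic factor'' and ``the $K(\Z,2)$-direction feeding in only stable systolic information'' have no precise meaning in terms of Definition~\ref{syscat}; the vanishing of cup products of degree-$1$ classes, while correct, does not by itself bound systolic category.  The paper's argument does the real work you are missing: after mapping $M$ into the $n$-skeleton of $K(\pi,1)\times(\C P^\infty)^{b_2}$, it pulls back metrics from the target (in the even-dimensional case) or analyzes explicit product metrics on the top cells $S^1\times\C P^{(n-1)/2}$ (in the odd case) to produce families $\gmetric_L$ with $\vol_n$ and $\sys_2$ bounded but $\sys_1\to\infty$, thereby killing every candidate inequality built solely from $\sys_1$ and $\sys_2$.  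None of this metric construction appears in your proposal.
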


Partly motivated by this observation, and also by the similar bound
for the cuplength, we were led to the following question on the LS
side.

In an earlier version of the paper, we asked whether the inequality
$\cat M \le n-2$ holds for connected $n$-manifolds with free
fundamental group and $n>3$. Recently J. Strom~\cite{S2} proved that
this holds for all $n>4$, and even for $CW$-spaces, not only
manifolds.  Subsequently, we found a simple proof of this assertion
for manifolds, but again with the restriction $n>4$, see
\propref{p:n>4}.

\begin{question}
If~$\cat M=2$, is it true that~$\cat (M \setminus \{pt\})=1$?  A
direct proof would imply the main theorem trivially.
\end{question}

\begin{question}
Given integers~$m$ and~$n$, describe the fundamental groups of closed
manifolds~$M$ with~$\dim M=n$ and~$\cat M=m$.
\end{question}

Note that in the case~$m=n$, the fundamental group of~$M$ is of
cohomological dimension~$\ge n$, see e.g. \cite[Proposition
2.51]{CLOT}. Thus, we can ask when the converse holds.

\begin{question}
Given a finitely presented group~$\pi$ and an integer $n\geq 4$ such
that~$H^n(\pi)\not = 0$, when can one find a closed manifold~$M$
satisfying~$\pi_1(M)=\pi$ and $\dim M=\cat M=n?$ Note that
\propref{p:lense} shows that such a manifold~$M$ does not always
exist.
\end{question}

We apply \corref{12b} to prove that the systolic category of
a~$4$-manifold is a lower bound for its LS category.  The definition
of systolic category is reviewed in Section~\ref{upbound}.

\begin{theorem}
\label{12}
Every closed orientable~$4$-manifold~$M$ satisfies the
inequality~${\rm cat}_{\sys}(M) \leq \cat(M)$.
\end{theorem}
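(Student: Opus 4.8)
The plan is to argue by cases according to the value of $\cat M$, which lies in $\{1,2,3,4\}$ since a closed manifold is not contractible. The dimension of $M$ furnishes the a priori bound ${\rm cat}_{\sys}(M)\le\dim M=4$ — a systolic inequality on $M^4$ is a product of at most four systoles of positive codimension — so the case $\cat M=4$ is immediate. The case $\cat M=1$ is equally formal: then $M$ is homotopy equivalent to $S^4$, so $H_1(M;\Z)$, $H_2(M;\Z)$, $H_3(M;\Z)$ and $\pi_1(M)$ all vanish, every systole of codimension $1$, $2$ or $3$ on $M$ is infinite, no systolic inequality of positive length can hold, and ${\rm cat}_{\sys}(M)=0\le 1$. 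Thus only $\cat M=2$ and $\cat M=3$ require work.

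For $\cat M=3$ I would rule out ${\rm cat}_{\sys}(M)=4$. The only partition of $4$ into four positive parts is $(1,1,1,1)$, so ${\rm cat}_{\sys}(M)=4$ would mean that some systolic inequality $\sys_1\cdot\sys_1\cdot\sys_1\cdot\sys_1\le C\vol_4$ involving only $1$-systoles (homotopical, homological or stable) holds on $M$. By the results of Gromov and Babenko, such an inequality for the homotopy or integral homology $1$-systole forces $M$ to be essential, while the stable variant forces the real cup-length of $M$ to be at least $4$; in either case $\cat M\ge 4$ — using that a closed essential manifold has LS category equal to its dimension — contradicting $\cat M=3$. Hence ${\rm cat}_{\sys}(M)\le 3=\cat M$.

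The substantive case is $\cat M=2$, and here \corref{12b} enters decisively. Since $\dim M=4\ge 3$, the contrapositive of \corref{12b} shows that $\pi_1(M)$ is free; consequently $H_1(M;\Z)$ is free abelian, the universal coefficient theorem gives that $H^2(M;\Z)$ is torsion-free, and Poincaré duality (available since $M$ is orientable) gives $H_2(M;\Z)\cong H^2(M;\Z)$, so $H_2(M;\Z)$ is torsion-free as well. The systolic bound recorded earlier — an $n$-manifold with free fundamental group and torsion-free $H_2$ has systolic category at most $n-2$, cf.~Theorem~\ref{64} — now applies with $n=4$ and yields ${\rm cat}_{\sys}(M)\le 2=\cat M$. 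The only genuinely delicate ingredient in the whole argument is the one invoked in the case $\cat M=3$, namely the identification of those $4$-manifolds with ${\rm cat}_{\sys}=4$ as exactly the essential ones, together with the classical fact that closed essential manifolds have maximal LS category; the remaining cases are either formal or reduce at once to \corref{12b} and the previously established systolic bound.
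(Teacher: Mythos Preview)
Your argument is essentially the same as the paper's: a case split on $\cat M$, invoking \corref{12b} and Theorem~\ref{64} (via the torsion-freeness of $H_2$) when $\cat M=2$, and the Gromov--Babenko characterization of $\syscat=4$ via essentiality when $\cat M=3$. The paper's proof is terser but follows exactly this route.

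One small slip: in the case $\cat M=1$ you conclude $\syscat(M)=0$, but in fact $\syscat(S^4)=1$. The trivial partition $4=4$ always gives a valid inequality $\sys_4\le\vol_4$ for a closed orientable manifold (the paper's remark after Definition~\ref{syscat} says exactly this). Your phrase ``systolic inequality of positive length'' should read ``length $\ge 2$'', and the conclusion should be $\syscat(M)\le 1$. This does not affect the inequality $\syscat\le\cat$, so the proof stands.
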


Note that the result on~$\cat$ applies to all
topological~$4$-manifolds.  Such a manifold is homotopy equivalent to
a finite~$CW$-complex.  The notion of~$\syscat$ makes sense for an
arbitrary finite~$CW$-complex, as the latter is homotopy equivalent to
a simplicial one, in which volumes and systoles can be defined.

We also prove the following lower bound for systolic category, which
is a weak analogue of Corollary~\ref{12b}, by exploiting a technique
based on Lescop's generalization of the Casson-Walker invariant
\cite{Les, KL}.  The Abel--Jacobi map of~$M$ is reviewed in
Section~\ref{six}.

\begin{theorem}
Let~$M$ be an~$n$-manifold satisfying~$b_1(M)=2$.  If the self-linking
class of a typical fiber of the Abel--Jacobi map is a nontrivial class
in~$H_{n-3}(M)$, then~$\syscat(M)\geq 3$.
\end{theorem}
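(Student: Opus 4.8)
The plan is to argue by contradiction: assume $\syscat(M)\le 2$, and derive that the self-linking class must vanish in $H_{n-3}(M)$. The mechanism linking systolic category to the Abel--Jacobi map is that a systolic category decomposition of a manifold with $b_1(M)=2$ forces the cohomology classes detecting volume to come, up to the relevant dimension, from the torus $\T^2=K(\Z^2,1)$ via the Abel--Jacobi map $\AJ\colon M\to \T^2$. More precisely, $\syscat(M)\le 2$ means the volume class of $M$ splits as a product of at most two lower-dimensional systolic ``cohomology-like'' classes; since $b_1(M)=2$, the only $1$-systolic classes available live in $H^1(M;\Z)=H^1(\T^2;\Z)$, and a product of two such classes is pulled back from $H^2(\T^2;\Z)$. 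I would first make this precise using the definition of $\syscat$ reviewed in Section~\ref{upbound} and the universal property of $\AJ$, concluding that when $\syscat(M)\le 2$ the fundamental cohomology class of $M$ factors through $\AJ^*$ applied to a class supported in degree $\le 2$.

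The second step is to translate this factorization into a statement about the typical fiber $F=\AJ^{-1}(y)$ of the Abel--Jacobi map over a regular value $y\in\T^2$, which is a cycle representing a class $[F]\in H_{n-2}(M)$. The self-linking class in $H_{n-3}(M)$ measures the obstruction to $F$ bounding, or more precisely is constructed (following Lescop \cite{Les,KL}) from the linking of $F$ with a parallel pushoff inside $M$. The key point I want is a \emph{vanishing criterion}: if the fundamental class of $M$ is, in the appropriate sense, concentrated on a $2$-dimensional skeleton pulled back from $\T^2$, then $F$ can be isotoped off of that skeleton, and the self-linking pairing it computes must be trivial. Concretely, I would show that $\syscat(M)\le 2$ implies the Poincaré dual of $[F]$ lies in $\AJ^*H^2(\T^2)$, which is a $1$-dimensional subgroup (generated by the fundamental class of $\T^2$), and then that the self-linking of such a fiber is forced to be zero because two generic parallel fibers $F, F'$ are disjoint and each represents the same pullback class, whose self-intersection data degenerates.

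The main obstacle, and the step requiring the Casson--Walker--Lescop machinery, is precisely this last implication: showing that the homological triviality of the self-linking class follows from the systolic-categorical constraint rather than just from naive dimension count. The subtlety is that the self-linking class $H_{n-3}(M)$ is \emph{not} simply the self-intersection of $[F]$ (which would be automatically zero for a fiber of a map to a surface); it is a secondary invariant depending on a choice of framing/nullhomology, and it is genuinely sensitive to Massey-product-type phenomena in $M$. So the real content is: a $\syscat\le 2$ decomposition gives enough ``room'' (a homotopy of $M$ onto a $2$-complex detecting all the volume) to trivialize these secondary data. I would carry this out by showing that under $\syscat(M)\le 2$, the relevant Massey/triple-product terms that feed into Lescop's invariant vanish — using that all cohomology of $M$ controlling volume is pulled back from the torus $\T^2$, whose cohomology ring is an exterior algebra with no nontrivial triple products. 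Contrapositively, a nontrivial self-linking class in $H_{n-3}(M)$ obstructs any such factorization, hence forces $\syscat(M)\ge 3$.
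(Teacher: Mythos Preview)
Your proposal rests on a misreading of what systolic category is. The bound $\syscat(M)\le 2$ is a purely \emph{negative} statement: it says that for every partition $n=k_1+\cdots+k_d$ with $d\ge 3$, there is \emph{no} universal constant $C(M)$ making $\prod\sys_{k_i}(\gmetric)\le C(M)\vol_n(\gmetric)$ hold for all metrics $\gmetric$. It does not produce any cohomological factorization of the fundamental class, any homotopy of $M$ onto a $2$-complex, or any constraint of the type ``all cohomology controlling volume is pulled back from $\T^2$''. So your first step --- deducing from $\syscat(M)\le 2$ that the fundamental class factors through $\AJ^*$ in low degree --- has no content: you cannot extract structural information about $H^*(M)$ from the mere failure of a family of metric inequalities. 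Likewise the later claim that $\syscat\le 2$ forces the Massey-type data feeding into the self-linking class to vanish is unsupported.

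The paper's argument goes in the opposite, constructive direction. One shows that if the self-linking class is nonzero, then the lift $\ov F$ of a typical fiber to the maximal free abelian cover $\manbar$ has nonzero homology class there (this is \propref{propal}, proved by a direct geometric linking computation: if $[\ov F]=0$ in $\manbar$, a compact bounding hypersurface in the cover lets you write the self-linking downstairs as a boundary). Once $[\ov F]\ne 0$ in $\manbar$, Gromov's theorem (Theorem~\ref{61}) gives a genuine systolic inequality
\[
\stsys_1(\gmetric)^2\cdot\sysh_{n-2}(\gmetric)\le C(M)\,\vol_n(\gmetric),
\]
corresponding to the partition $n=1+1+(n-2)$, and hence $\syscat(M)\ge 3$. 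The whole point is to \emph{produce} an inequality of length $3$; you were trying instead to rule one out, which is not what the hypothesis or the definition hands you.
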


The proof of the main theorem proceeds roughly as follows. If the
group~$\pi:=\pi_1(M)$ is not free, then by a result of J.~Stallings
and R.~Swan, the group~$\pi$ is of cohomological dimension at
least~$2$.  We then show that~$\pi$ carries a suitable
nontrivial~$2$-dimensional cohomology class~$u$ with twisted
coefficients, and of category weight~$2$.  Viewing~$M$ as a subspace
of~$K(\pi,1)$ that contains the~$2$-skeleton~$K(\pi,1)^{(2)}$, and
keeping in mind the fact that the~$2$-skeleton carries the fundamental
group, we conclude that the restriction (pullback) of~$u$ to~$M$ is
non-zero and also has category weight~$2$.  By Poincar\'e duality with
twisted coefficients, one can find a complementary~$(n-2)$-dimensional
cohomology class.  By a category weight version of the cuplength
argument, we therefore obtain a lower bound of~$3$ for~$\cat M$.

In Section~\ref{local}, we review the material on local coefficient
systems, a twisted version of Poincar\'e duality, and 2-dimensional
cohomology of non-free groups.  In Section~\ref{three}, we review the
notion of category weight.  In Section~\ref{four}, we prove the main
theorem. In Section~\ref{higher} we prove \theoref{t:realization}.
In Section~\ref{upbound}, we recall the notion of systolic
category, and prove that it provides a lower bound for the LS category
of a~$4$-manifold.  In Section~\ref{six}, we recall a 1983 result of
M.~Gromov's and apply it to obtain a lower bound of~$3$ for systolic
category for a class of manifolds defined by a condition of
non-trivial self-linking of a typical fiber of the Abel--Jacobi map.

\section{Cohomology with local coefficients}
\label{local}

A {\it local coefficient system}~$\A$ on a path
connected~$CW$-space~$X$ is a functor from the fundamental
groupoid~$\Gamma(X)$ of~$X$, to the category of abelian groups.  See
\cite{Ha}, \cite{Wh} for the definition and properties of local
coefficient systems.

In other words, an abelian group~$\A_x$ is assigned to each
point~$x\in X$, and for each path~$\ga$ joining~$x$ to~$y$, an
isomorphism~$\ga^*:\A_y \to \A_x$ is given.  Furthermore, paths that
are homotopic are required to yield the same isomorphism.

Let~$\pi=\pi_1(X)$, and let~$\zp$ be the group ring of~$\pi$.  Note
that all the groups~$A_x$ are isomorphic to a fixed group~$A$.  We
will refer to~$A$ as a {\it stalk\/} of~$\A$.

Given a map~$f: Y \to X$ and a local coefficient system~$\A$ on~$X$,
we define a local coefficient system on~$Y$, denoted~$f^*\A$, as
follows.  The map~$f$ yields a functor~$\Gamma(f): \Gamma(Y) \to
\Gamma(X)$, and we define~$f^*\A$ to be the functor~$\A\circ
\Gamma(f)$. Given a pair of coefficient systems~$\A$ and~$\B$, the
tensor product~$\A\otimes \B$ is defined by setting~$(\A\otimes
\B)_x=\A_x\otimes\B_x$.

\m \begin{ex}\label{ex:bundle} A useful example of a local coefficient
system is given by the following construction.  Given a fiber bundle
$p: E \to X$ over~$X$, set~$F_x=p^{-1}(x)$.  Then the family
$\{H_k(F_x)\}$ can be regarded a local coefficient system, see
\cite[Example 3, Ch. VI, \S 1]{Wh}.  An important special case is that
of an~$n$-manifold~$M$ and spherical tangent bundle~$p: E \to M$ with
fiber~$S^{n-1}$, yielding a local coefficient system~$\O$ with
$\O_x=H_{n-1}(S^{n-1}_x)\cong \Z$.  This local system is called the
{\it orientation sheaf\/} of~$M$.
\end{ex}

\begin{rem}
\label{22}
There is a bijection between local coefficients on~$X$ and
$\zp$-modules \cite[Ch. 1, Exercises F]{Sp}. If~$\A$ is a local
coefficient system with stalk~$A$, then the natural action of the
fundamental group on~$A$ turns~$A$ into a~$\Z[\pi]$-module.
Conversely, given a~$\Z[\pi]$-module~$A$, one can construct a local
coefficient system~$\scr L(A)$ such that induced~$\Z[\pi]$-module
structure on~$A$ coincides with the given one, cf.~\cite{Ha}.
\end{rem}

We recall the definition of the (co)homology groups with local
coefficients via modules \cite{Ha}:
\begin{equation}\label{cohomology}
H^k(X;\A)\cong H^k(\Hom_{\zp}(C_*(\wt X), A), \delta)
\end{equation}
and
\begin{equation}\label{homology}
H_k(X;\A)\cong H_k(A\otimes _{\zp}C_*(\wt X), 1\otimes \pa).
\end{equation}
Here~$(C_*(\wt X), \pa)$ is the chain complex of the universal
cover~$\wt X$ of~$X$,~$A$ is the stalk of the local coefficient
system~$\A$, and~$\delta$ is the coboundary operator.  Note that in
the tensor product we used the right~$\zp$ module structure on~$A$
defined via the standard rule~$ag=g^{-1}a$, for~$a\in A, g\in \pi$.

Recall that for~$CW$-complexes~$X$, there is a natural bijection
between equivalence classes of local coefficient systems and locally
constant sheaves on~$X$.  One can therefore define (co)homology with
local coefficients as the corresponding sheaf cohomology \cite{Br}.
In particular, we refer to \cite{Br} for the definition of the cup
product
\[
\cup: H^i(X;\A) \otimes H^j(X;\B) \to H^{i+j}(X; \A\otimes \B)
\]
and the cap product
\[
\cap: H_i(X;\A) \otimes H^j(X;\B) \to H_{i-j}(X; \A\otimes \B).
\]
A nice exposition of the cup  and the cap products in a slightly
different setting can be found in \cite{Bro}.  In particular, we have
the cap product
\[
H_k(X;\A)\otimes H^k(X;\B)\to H_0(X;\A\otimes \B)\cong
A\otimes_{\zp}B.
\]

\begin{prop}
\label{l:evaluation}
Given an integer~$k\ge 0$, there exists a local coefficient
system~$\B$ and a class~$v\in H^k(X;\B)$ such that, for every local
coefficient system~$\A$ and nonzero class~$a\in H_k(X;\A)$, we
have~$a\cap v\ne 0$.
\end{prop}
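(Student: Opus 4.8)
The plan is to produce the universal class $v$ directly from the chain-level description \eqref{cohomology}--\eqref{homology}. Take $\B:=\scr L(\zp)$, the local coefficient system corresponding to the regular $\zp$-module (here $\zp$ is viewed as a left module over itself), so that by \eqref{cohomology} we have $H^k(X;\B)\cong H^k(\Hom_{\zp}(C_*(\wt X),\zp),\delta)$. The point is that $\Hom_{\zp}(C_k(\wt X),\zp)$ is the free $\zp$-module on the $k$-cells of $X$, and evaluation against a chain is a perfect $\zp$-bilinear pairing at the cochain level. Concretely, if $e_1,\dots,e_m$ are the $k$-cells of $X$ and $\tilde e_1,\dots,\tilde e_m$ are chosen lifts to $\wt X$, let $v$ be represented by the cocycle-in-waiting $\xi\in\Hom_{\zp}(C_k(\wt X),\zp)$ dual to this basis, i.e.\ $\xi(\tilde e_i)=1\in\zp$ for all $i$. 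One must check $\xi$ is actually a cocycle: $\delta\xi$ is the $\zp$-linear map $C_{k+1}(\wt X)\to\zp$ sending a lifted $(k+1)$-cell $\tilde\sigma$ to $\xi(\pa\tilde\sigma)$, and since every boundary $\pa\tilde\sigma$ lies in the augmentation ideal component-wise\ldots this is in fact the main technical point and I address it below. Grant for the moment that $v:=[\xi]\in H^k(X;\B)$ is well defined.

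Now fix any local system $\A$ with stalk $A$ (a $\zp$-module) and a nonzero class $a\in H_k(X;\A)$, represented by a cycle $z=\sum_i a_i\otimes\tilde e_i\in A\otimes_{\zp}C_k(\wt X)$ with $a_i\in A$. Under the cap product
\[
\cap:H_k(X;\A)\otimes H^k(X;\B)\to H_0(X;\A\otimes\B)\cong A\otimes_{\zp}\zp\cong A,
\]
the class $a\cap v$ is represented at the chain level by evaluating $\xi$ against $z$, which gives $\sum_i a_i\otimes\xi(\tilde e_i)=\sum_i a_i\otimes 1\in A\otimes_{\zp}\zp\cong A$, i.e.\ $a\cap v=\sum_i a_i$ under this identification — but more usefully, the cap pairing with the basis-dual cochain $\xi$ recovers each coefficient $a_i$ separately once one is careful about which cell $\tilde e_i$ is being evaluated. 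The honest statement is: cap product with $v$ is, on the chain level, the map $A\otimes_{\zp}C_k(\wt X)\to A$ induced by $1\otimes\xi$, and since $\{\tilde e_i\}$ is a $\zp$-basis of $C_k(\wt X)$, the composite $A\otimes_{\zp}C_k(\wt X)\xrightarrow{1\otimes\xi}A\otimes_{\zp}\zp\cong A$ is an \emph{isomorphism}. Therefore it sends the nonzero homology class $a$ — represented by a cycle whose image is a well-defined element of $A$ modulo the image of $1\otimes\pa$ — to a nonzero element, provided the boundaries also map compatibly, which they do because $1\otimes\xi$ is a chain map (as $\xi$ is a cocycle) from $(A\otimes_{\zp}C_*(\wt X),1\otimes\pa)$ to $A$ placed in degree $0$. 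Hence $a\cap v\neq 0$.

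The main obstacle is verifying that $\xi$ is a cocycle, equivalently that the dual-basis cochain descends to cohomology; this is exactly the assertion that the "fundamental cochain" of $X$ in coefficients $\zp$ is closed, and it is \emph{not} true for a general CW-complex in a naive sense — rather, the correct formulation is that $1\otimes\xi\colon A\otimes_{\zp}C_*(\wt X)\to A$ is a chain map to $A[0]$ iff $\xi\pa=0$, and $\xi\pa$ applied to a lifted $(k+1)$-cell $\tilde\sigma$ is $\sum(\text{incidence coefficients in }\zp)$, which is the image of $\pa\tilde\sigma$ under the augmentation-type map dual to the basis; this vanishes precisely because the cellular boundary in the \emph{untwisted} complex $C_*(X)=\Z\otimes_{\zp}C_*(\wt X)$ composed with $\xi$ factors correctly. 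I would handle this by instead taking $\B$ so that $H^k(X;\B)$ is the right object by Poincaré–Lefschetz-type adjunction: use the isomorphism $H^k(X;\B)\cong\Hom_{\zp}(H_k(\wt X),\zp)\oplus(\text{Ext term})$ coming from the universal coefficient spectral sequence, and more cleanly, realize the pairing $H_k(X;\A)\otimes H^k(X;\B)\to A\otimes_{\zp}B$ as the evaluation pairing and invoke that for $\B=\scr L(\zp)$ and any $\zp$-module $A$, $\Hom_{\zp}(-,\zp)$ applied to the free resolution $C_*(\wt X)$ yields a cochain complex whose degree-$k$ cohomology pairs perfectly with $H_k(-;\A)$ on the nose for the bottom cell structure — concretely, pick the cocycle $\xi$ representing the canonical element of $\Hom_{\zp}(C_k(\wt X),\zp)$ and push the closedness check onto the contractibility/acyclicity of $C_*(\wt X)$ in the relevant range. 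This last step is where the real work lies; everything downstream is formal manipulation of the cap product at the chain level.
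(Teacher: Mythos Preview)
There is a genuine gap: the choice $\B=\scr L(\zp)$ does not work, and the problems you flag are not mere technicalities but actual failures.

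First, the cochain $\xi$ with $\xi(\tilde e_i)=1$ is almost never a cocycle. For a lifted $(k{+}1)$-cell $\tilde\sigma$ with $\pa\tilde\sigma=\sum_i c_i\tilde e_i$ ($c_i\in\zp$) you get $\delta\xi(\tilde\sigma)=\sum_i c_i$, and there is no reason for this element of $\zp$ to vanish. Your attempted rescue via ``contractibility/acyclicity of $C_*(\wt X)$'' cannot succeed, since that complex computes $H_*(\wt X)$ and is not acyclic. Second, even at the chain level the map $1\otimes\xi:A\otimes_{\zp}C_k(\wt X)\to A\otimes_{\zp}\zp\cong A$ is \emph{not} an isomorphism: $C_k(\wt X)$ is free of rank $m$ (the number of $k$-cells), so this is the addition map $A^m\to A$, $(a_1,\dots,a_m)\mapsto\sum a_i$, which kills many nonzero cycles. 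So even granting a cocycle, the pairing with a single class in $H^k(X;\zp)$ cannot detect every nonzero $a$.

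The fix, and the idea you are missing, is to let the coefficient module be large enough to absorb all of $C_k(\wt X)$. Take $B:=C_k(\wt X)/\Im\pa_{k+1}$ and let $f:C_k(\wt X)\to B$ be the quotient map. Then $\delta f=f\circ\pa_{k+1}=0$ tautologically, so $v:=[f]\in H^k(X;\B)$ is well defined with no verification needed. Right exactness of $A\otimes_{\zp}(-)$ gives an exact row
\[
A\otimes_{\zp}C_{k+1}(\wt X)\xrightarrow{1\otimes\pa_{k+1}}A\otimes_{\zp}C_k(\wt X)\xrightarrow{1\otimes f}A\otimes_{\zp}B\to 0,
\]
so a cycle $z$ representing $a\ne 0$ satisfies $z\notin\Im(1\otimes\pa_{k+1})$ and hence $(1\otimes f)(z)\ne 0$ in $A\otimes_{\zp}B=H_0(X;\A\otimes\B)$; this is exactly $a\cap v\ne 0$. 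The moral is that you should not try to force $B$ to be $\zp$; rather, manufacture $B$ so that the universal cochain is a cocycle by definition.
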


\begin{proof}
We convert the stalk of~$\A$ into a right~$\zp$-module~$A$ as above.
We use the isomorphisms \eqref{cohomology} and
\eqref{homology}. Consider the chain~$\zp$-complex
\[
\CD \dots @>>> C_{k+1}(\wt X) @>\pa_{k+1}>>C_k(\wt X) @>\pa_k>>
C_{k-1}(\wt X) @>>> \dots .
\endCD
\]
For the given~$k$, we set~$B: =C_k(\wt X)/\Im \pa_{k+1}$.  Let~$\B$ be
the corresponding local system on~$X$.  Thus, we obtain the exact
sequence of~$\zp$-modules
\[
\CD C_{k+1}(\wt X) @>\pa_{k+1}>>C_k(\wt X)@>f>>B\to 0.  \endCD
\]
Note that the epimorphism~$f$ can be regarded as a~$k$-cocycle with
values in~$\B$, since~$\delta f (x)=f\pa_{k+1}(x)=0$.  Let~$v:=[f]\in
H^k(X;\B)$ be the cohomology class of~$f$.  Now we prove that
$$
a\cap [f] \not=0.
$$

Since the tensor product is right exact, we obtain the diagram
\[
\CD A\otimes_{\zp}C_{k+1}(\wt X)
@>1\otimes\pa_{k+1}>>A\otimes_{\zp}C_k(\wt X)@>1\otimes
f>>A\otimes_{\zp}B @>>> 0\\ @. @. g @ VVV @.\\
@. @. A\otimes_{\zp}C_{k-1}(\wt X) \endCD
\]
where the row is exact.  The composition
\[
\CD A\otimes_{\zp}C_k(\wt X)@>1\otimes f>>A\otimes_{\zp}B @>g>>
A\otimes_{\zp}C_{k-1}(\wt X)
\endCD
\]
coincides with~$1\otimes \pa_k$. We represent the class~$a$ by a cycle
\[
z\in A\otimes_{\zp} C_k(\wt X).
\]
Since~$z\notin \Im(1\otimes \pa_{k+1})$, we conclude that
$$
(1\otimes f)(z)\ne 0\in A\otimes_{\zp}B=H_0(X;\A\otimes \B).
$$
Thus, for the cohomology class~$v$ of~$f$ we have~$a\cap v \ne 0$.
\end{proof}

Every closed connected~$n$-manifold~$M$ satisfies~$H_n(M; \O)\cong
\Z$.  A generator (one of two) of this group is called the {\it
fundamental class} of~$M$ and is denoted by~$[M]$.

\m One has the following generalization of the Poincar\'e duality
isomorphism.

\begin{thm}[\cite{Br}]\label{th:PD}
The homomorphism
\begin{equation}\label{eq:PD}
\Delta: H^i(M;\A)\to H_{n-i}(M;\O\otimes \A)
\end{equation}
defined by setting~$\Delta(a)=a\cap [M]$, is an isomorphism.
\end{thm}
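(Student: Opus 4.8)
I would prove this twisted Poincar\'e duality by reducing it, via some homological algebra, to classical untwisted Poincar\'e duality on the universal cover. For a general closed topological $n$-manifold one can also argue sheaf-theoretically, as in \cite{Br}: the dualizing complex of an $n$-manifold is the orientation sheaf~$\O$ placed in degree~$n$, and compactness lets one pass from compactly supported to ordinary cohomology. But the reduction below is more elementary, and is available whenever~$M$ carries a finite $CW$ (or simplicial) structure.

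First I would pass to the universal cover~$\wt M$, whose cellular chain complex~$(C_*(\wt M),\pa)$ is a finite complex of finitely generated free~$\zp$-modules, the finiteness coming from~$M$ being closed. Fixing a fundamental cycle~$\zeta$ representing~$[M]\in H_n(M;\O)\cong H_n\bigl(\O\otimes_{\zp}C_*(\wt M)\bigr)$, the chain-level cap product with~$\zeta$ should define a~$\zp$-chain map
\[
\Hom_{\zp}\bigl(C_{n-*}(\wt M),\zp\bigr)\longrightarrow \O\otimes_{\Z}C_*(\wt M),
\]
and the crux is to prove that this map is a~$\zp$-chain homotopy equivalence. I would establish this exactly as in the orientable case, using the dual cell decomposition of the triangulation: the dual cells correspond bijectively to the simplices of the original decomposition, capping with~$\zeta$ realizes that correspondence, and the only feature not already present in the orientable setting is that the identification of the dual cellular chain complex with the original cochain complex is twisted by~$\O$ — which carries a nontrivial~$\pi$-action precisely when~$M$ is non-orientable — so the point demanding care is the~$\zp$-equivariance of the equivalence. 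This is the step I expect to be the main obstacle; alternatively it may simply be quoted, being the assertion that a closed manifold is a~$\zp$-Poincar\'e complex.

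The remainder should be formal homological algebra. I would convert the stalk~$A$ of~$\A$ into a right~$\zp$-module as in \remref{22}; since each~$C_i(\wt M)$ is finitely generated free, there is a natural isomorphism~$A\otimes_{\zp}\Hom_{\zp}(C_i(\wt M),\zp)\cong\Hom_{\zp}(C_i(\wt M),A)$. Applying~$A\otimes_{\zp}(-)$ to the chain homotopy equivalence above then yields a chain homotopy equivalence
\[
\Hom_{\zp}\bigl(C_{n-*}(\wt M),A\bigr)\ \simeq\ (\O\otimes_{\Z}A)\otimes_{\zp}C_*(\wt M);
\]
by \eqref{cohomology} the left-hand side computes~$H^{n-*}(M;\A)$, while by \eqref{homology} the right-hand side computes~$H_*(M;\O\otimes\A)$, so passing to homology gives~$H^i(M;\A)\cong H_{n-i}(M;\O\otimes\A)$. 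To finish, I would unwind the construction to check that this composite is precisely~$a\mapsto a\cap[M]$ — the chain-level cap with~$\zeta$ descends to the cap product on (co)homology and is compatible with the universal-coefficient identifications — and note that naturality in~$\A$ is automatic from the functoriality of all the maps involved.
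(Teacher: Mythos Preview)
Your proposal is correct, but note that the paper does not actually prove this theorem: it simply quotes it from Bredon~\cite{Br}, with the one-line remark that Bredon writes~$\O^{-1}$ where the paper writes~$\O$, the two coinciding for manifolds. So there is no ``paper's own proof'' to compare against beyond the sheaf-theoretic argument you already mention in your first paragraph.

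That said, the route you outline --- passing to the universal cover, using the finite free~$\zp$-chain complex~$C_*(\wt M)$, establishing a~$\zp$-chain homotopy equivalence between~$\Hom_{\zp}(C_{n-*}(\wt M),\zp)$ and~$\O\otimes C_*(\wt M)$ via dual cells, and then tensoring with the coefficient module~$A$ --- is the standard ``Poincar\'e complex'' approach (as in Wall, or in Brown~\cite{Bro} for groups), and is genuinely different from Bredon's sheaf-theoretic derivation via the dualizing complex. Your approach is more hands-on and stays within the chain-level framework of~\eqref{cohomology} and~\eqref{homology} that the paper actually uses, which is a virtue here; Bredon's argument, by contrast, works uniformly for all closed topological manifolds without assuming a CW structure. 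You are right to flag the~$\zp$-equivariance of the dual-cell identification as the one point needing care in the non-orientable case; this is exactly where the twist by~$\O$ enters, and it is routinely handled in the references on Poincar\'e complexes.
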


In fact, in \cite{Br} there is the sheaf~$\O^{-1}$ at the right, but
for manifolds we have~$\O=\O^{-1}$.

Given a group~$\pi$ and a~$\Z[\pi]$-module~$A$, we denote by
$H^*(\pi;A)$ the cohomology of the group~$\pi$ with coefficients
in~$A$, see e.g.~\cite{Bro}.  Recall that~$H^i(\pi;A)=
H^i(K(\pi,1);\scr L(A))$, see Remark~\ref{22}.

\m Let~$F$ be a principal ideal domain and let~$\cd_F(\pi)$ denote the
cohomological
dimension of~$\pi$ over~$F$, i.e. the largest~$m$ such that there
exists an~$F[\pi]$-module~$A$ with~$H^m(\pi; A)\ne 0$.

\begin{thm}[\cite{Stal, Swan}]\label{th:free}
If~$\cd_{\Z}\pi\le 1$ then~$\pi$ is a free group.
\end{thm}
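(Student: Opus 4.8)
The statement is the theorem of Stallings (for finitely generated~$\pi$) and Swan (in general), so the plan is to reconstruct their argument rather than invent something new. The first observation is that $\pi$ must be torsion-free: a nontrivial finite subgroup would contain a cyclic subgroup~$\Z/p$ for some prime~$p$, and since $H^{2i}(\Z/p;\Z)=\Z/p\ne 0$ for all~$i\ge 1$ we have $\cd_{\Z}(\Z/p)=\infty$; as cohomological dimension cannot increase on passing to a subgroup, this would contradict $\cd_{\Z}\pi\le 1$.

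The next step is to reduce to the finitely generated case. Every finitely generated subgroup~$H\le\pi$ again satisfies $\cd_{\Z}H\le 1$, so granting (Stallings) that each such~$H$ is free, one argues as follows. From the short exact sequence $0\to I_\pi\to\Z[\pi]\to\Z\to 0$ together with the vanishing of $\operatorname{Ext}^2_{\Z[\pi]}(\Z,-)$, the augmentation ideal~$I_\pi$ is a projective $\Z[\pi]$-module. Writing~$\pi$ as the directed union of its finitely generated (hence free) subgroups, Swan's argument upgrades this projective module to a free one, whence~$\pi$ is free. The passage from ``locally free with projective augmentation ideal'' to ``free'' is Swan's genuine contribution and is not formal; it is handled by first reducing to the countable case.

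It remains to treat a nontrivial finitely generated~$H$ with $\cd_{\Z}H\le 1$. Such~$H$ is infinite, since a nontrivial finite group has infinite cohomological dimension, and torsion-free by the first step. One then shows $H^1(H;\Z[H])\ne 0$, i.e. that~$H$ has more than one end, and invokes Stallings' structure theorem: $H$ splits nontrivially as an amalgamated free product or an HNN extension over a finite subgroup. Torsion-freeness forces that finite subgroup to be trivial, so either $H\cong\Z$ or $H\cong A*B$ with~$A,B$ nontrivial. In the latter case~$A$ and~$B$ are finitely generated (e.g. as retracts of~$H$) and inherit $\cd_{\Z}\le 1$, while by Grushko's theorem $\rk A+\rk B=\rk H$ with $\rk A,\rk B\ge 1$. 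Induction on~$\rk H$, the base case $\rk H=1$ giving $H\cong\Z$, then shows~$H$ is free.

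The main obstacle is Stallings' ends theorem itself — extracting a splitting of~$H$ over a finite subgroup from the hypothesis $H^1(H;\Z[H])\ne 0$ — together with the fact that $\cd_{\Z}H=1$ does force this nonvanishing; these are the only genuinely nonformal ingredients, whereas torsion-freeness, the reduction to finite generation modulo Swan's lemma, and the Grushko induction are comparatively routine. For this reason we do not reproduce the argument here and simply cite~\cite{Stal, Swan}.
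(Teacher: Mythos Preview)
The paper does not prove this theorem at all; it is stated with the attribution \cite{Stal, Swan} and immediately used as a black box (the very next line is ``We need the following well-known fact'' introducing \lemref{l:cd}). Your proposal, which ends by deferring to the same citations, therefore matches the paper's treatment exactly; the outline you give of the Stallings--Swan argument is additional and goes beyond what the paper offers.

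For what it is worth, the sketch is accurate: the torsion-freeness step, the identification of Stallings' ends theorem and the nonvanishing of $H^1(H;\Z[H])$ as the genuinely hard inputs in the finitely generated case, the Grushko induction on rank, and Swan's passage from the locally free case with projective augmentation ideal to the general case are all correctly described. One small presentational wrinkle: in your reduction paragraph you invoke ``granting (Stallings) that each such $H$ is free'' before you have handled the finitely generated case, which reads as circular on first pass; reordering so that the finitely generated case comes first would be cleaner.
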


Wed need the following well-known fact.

\begin{lemma}\label{l:cd}
If~$\pi$ be a group with~$\cd_{\Z}\pi=q\geq 2$.
Then~$H^2(\pi;A)\ne 0$ for some~$\zp$-module~$A$.
\end{lemma}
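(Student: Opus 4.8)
The plan is to argue by contradiction: suppose $H^2(\pi;A)=0$ for \emph{every} $\zp$-module $A$, and deduce that in fact $H^m(\pi;A)=0$ for all $m\ge 2$ and all $A$, contradicting $\cd_\Z\pi = q\ge 2$. The mechanism is a dimension-shifting argument. Fix any $\zp$-module $A$ and $m\ge 3$. Choose a short exact sequence of $\zp$-modules $0\to K\to P\to A\to 0$ with $P$ free (take $P$ to be a free module surjecting onto $A$). Since $P$ is free, and hence cohomologically trivial in positive degrees, the long exact sequence in group cohomology gives isomorphisms $H^{m}(\pi;A)\cong H^{m-1}(\pi;K)$ for $m\ge 2$. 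Iterating this shift $m-2$ times, one obtains $H^m(\pi;A)\cong H^2(\pi;K')$ for a suitable $\zp$-module $K'$ (an $(m-2)$-fold kernel). By the standing assumption the right-hand side vanishes, so $H^m(\pi;A)=0$ for all $m\ge 2$ and all $A$.

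But this forces $\cd_\Z\pi\le 1$, whence $\pi$ is free by Theorem~\ref{th:free}, and a free group has $\cd_\Z\le 1$, contradicting the hypothesis $\cd_\Z\pi=q\ge 2$. Hence there must exist a $\zp$-module $A$ with $H^2(\pi;A)\ne 0$, as claimed. One small point to be careful about is the base case of the induction: the isomorphism $H^m(\pi;A)\cong H^{m-1}(\pi;K)$ is valid already for $m=2$ (it reads $H^2(\pi;A)\cong H^1(\pi;K)/(\text{image of }H^1(\pi;P))$ only if $H^1(\pi;P)$ need not vanish, but $H^1(\pi;P)=0$ since $P$ is free), so the shift is clean down to degree $2$ and we stop there rather than going to degree $1$.

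The main obstacle — really the only thing requiring care — is making sure one has the right notion of ``cohomologically trivial'' for free $\zp$-modules: a free module $\zp^{(I)}$ is a direct sum of copies of the group ring, and $H^n(\pi;\zp)=0$ for $n\ge 1$ because $\zp$ is a projective (indeed free) module over itself, so $\Hom_\zp(-,\zp)$ applied to a projective resolution, or equivalently the computation via the standard bar resolution, yields vanishing positive cohomology; and group cohomology commutes with the relevant direct sums when $\pi$ is, say, of type $FP_\infty$ — but here we do not even need that refinement, since we only ever use vanishing of $H^{m-1}(\pi;P)$ and $H^m(\pi;P)$ for a \emph{single} free module $P$ at each stage, which follows from $H^*(\pi;\zp^{(I)})\cong H^*(\pi;\zp)^{(I)}$ in the range of degrees $\ge 1$ using that cohomology commutes with arbitrary direct sums for any group. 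Alternatively, and more simply, one can sidestep freeness entirely by taking $P$ to be an \emph{injective} $\zp$-module containing $A$ and shifting \emph{up}, which makes the vanishing of $H^{\ge 1}(\pi;P)$ immediate; either route completes the argument.
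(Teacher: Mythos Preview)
Your primary argument contains a genuine error: free (or projective) $\zp$-modules are \emph{not} in general acyclic for group cohomology. The assertion ``$H^n(\pi;\zp)=0$ for $n\ge 1$ because $\zp$ is projective over itself'' confuses $\mathrm{Tor}$ with $\mathrm{Ext}$: a projective second argument kills $\mathrm{Tor}^{>0}_{\zp}(\Z,-)$ (group \emph{homology}), but not $\mathrm{Ext}^{>0}_{\zp}(\Z,-)$ (group \emph{cohomology}). For a concrete counterexample, if $\pi$ is the fundamental group of a closed orientable surface of genus $g\ge 1$ then $H^2(\pi;\zp)\cong\Z$. The auxiliary claim that group cohomology commutes with arbitrary direct sums ``for any group'' is likewise false; this requires a finiteness condition such as type $FP_n$.

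There is a second, independent error: even granting the (false) acyclicity of $P$, the shift goes the wrong way. From $0\to K\to P\to A\to 0$ the connecting map is $H^{m}(\pi;A)\to H^{m+1}(\pi;K)$, so one obtains $H^{m}(\pi;A)\cong H^{m+1}(\pi;K)$, not $H^{m-1}(\pi;K)$. Iterating raises the degree rather than lowering it to $2$, so the induction never reaches the hypothesis.

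Your final sentence, however, is exactly right and is precisely the paper's argument: embed $A$ into an injective $J$, set $A''=J/A$, and use $H^{m}(\pi;A)\cong H^{m-1}(\pi;A'')$ for $m\ge 2$ (valid because injectives \emph{are} acyclic for $\mathrm{Ext}$ in the second variable). Iterating this down from degree $q$ to degree $2$ gives the result directly, without any need for contradiction. The fix is simply to delete the free-module discussion and keep only the injective dimension-shift.
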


\begin{proof}
Let~$0\to A'\to J\to A''\to 0$ be an exact sequence of~$\zp$-modules
with~$J$ injective. Then~$H^k(\pi;A')=H^{k-1}(\pi;A'')$ for $k>1$.
Since~$H^q(\pi;B)\ne 0$ for some~$B$, the proof can be completed by
an obvious induction.
\end{proof}

\begin{rem}
Let~$u\in H^1(\pi; I(\pi))$ be the Berstein-\v Svarc class described
in~\cite[Proposition 2.51]{CLOT}. If~$3\le \cd_{\Z}(\pi)=n<\infty$
then~$\cat(K(\pi,1))=n=\dim K(\pi,1)$ by \cite{EG}, and hence
$u^{\otimes n} \ne 0$ by ~\cite[Proposition 2.51]{CLOT}
(for~$n=\infty$ this means that~$u^{\otimes k}\ne 0$ for all~$k$). In
particular,~$H^2(\pi;I(\pi)\otimes I(\pi))\ne 0$. However, we do not
know if~$u\otimes u\ne 0$ in case~$\cd_{\Z}(\pi) =2$.
\end{rem}

\section{Category weight and lower bounds for~$\cat$}
\label{three}

\begin{definition}[\cite{BG,Fe,F}]
\label{def:cat-map}
Let~$f: X \to Y$ be a map of (locally contractible)~$CW$-spaces. The
\emph{Lusternik--Schnirelmann category of~$f$}, denoted~$\cat(f)$, is
defined to be the minimal integer~$k$ such that there exists an open
covering~$\{U_0, \ldots, U_k\}$ of~$X$ with the property that each of
the restrictions~$f|A_i\colon A_i \to Y$,~$i=0,1, \ldots, k$ is
null-homotopic.

The \emph{Lusternik--Schnirelmann category~$\cat X$ of a space~$X$} is
defined as the category~$\cat (1_X)$ of the identity map.
\end{definition}

\begin{definition}\label{def:swgt}
The \emph{category weight}~$\wgt(u)$ of a non-zero cohomology class~$u \in
H^*(X; \A)$ is defined as follows:
\begin{equation*}
\label{31}
\wgt(u)\ge k \Longleftrightarrow \{\gf^*(u)=0 {\rm\ for\ every\ } \gf\colon F
\to X
{\rm\ with\ } \cat(\gf) < k\}.
\end{equation*}
\end{definition}

\begin{rem}\label{rem:credits}\rm
E.~Fadell and S.~Husseini (see \cite{FH}) originally proposed the
notion of category weight.  In fact, they considered an invariant
similar to the~$\wgt$ of \eqref{31} (denoted in \cite{FH} by~$\cwgt$),
but where the defining maps~$\gf\colon F \to X$ were required to be
inclusions rather than general maps.  As a consequence,~$\cwgt$ is not
a homotopy invariant, and thus a delicate quantity in homotopy
calculations.  Yu.~Rudyak \cite{R1, R2} and J.~Strom \cite{S1}
suggested the homotopy invariant version of category weight as defined
in \defref{def:swgt}. Rudyak called it \emph{strict} category weight
(using the notation~$\swgt (u)$) and Strom called it \emph{essential}
category weight (using the notation~$E(u)$).  At the Mt.~Holyoke
conference in 2001, both creators agreed to adopt the notation~$\wgt$
and call it simply \emph{category weight}.
\end{rem}

\begin{prop}[\cite{R1,S1}]
\label{prop:swgtprops}
Category weight has the following properties.
\begin{enumerate}
\item~$1\le \wgt(u) \leq \cat(X)$, for all~$u \in \widetilde
H^*(X;\A), u\ne 0$.
\vskip3pt
\item For every~$f\colon Y \to X$ and~$u\in H^*(X;\A)$ with
$f^*(u)\not = 0$ we have
$\cat(f) \geq \wgt(u)$ and~$\wgt(f^*(u)) \geq \wgt(u)$.
\vskip3pt
\item For~$u\in H^*(X;\A)$ and~$v\in H^*(X;\B)$ we have
\[
\wgt(u\cup v) \geq \wgt(u) + \wgt(v).
\]  \vskip3pt
\item For every~$u \in H^s(K(\pi,1);\A)$,~$u\ne 0$, we have
$\wgt(u)\geq s$.  \vskip3pt
\end{enumerate}
\end{prop}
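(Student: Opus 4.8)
The plan is to verify the four properties essentially by unwinding the definition of category weight in \defref{def:swgt}, since each assertion reduces to a manipulation of the defining condition on maps $\gf\colon F\to X$ with $\cat(\gf)<k$.

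First, for property (1), the upper bound $\wgt(u)\le\cat(X)$: if $\cat(X)=m$ then the identity $1_X$ has $\cat(1_X)=m$, and for $k=m+1$ the map $1_X$ would witness that $\wgt(u)\ge m+1$ fails unless $1_X^*(u)=u=0$, contradicting $u\ne 0$; hence $\wgt(u)\le m$. The lower bound $\wgt(u)\ge 1$ holds because a map $\gf$ with $\cat(\gf)<1$, i.e.\ $\cat(\gf)=0$, is null-homotopic, so $\gf^*(u)=0$ automatically — but one must be slightly careful that $\cat(\gf)=0$ for $\gf\colon F\to X$ indeed forces $\gf^*$ to vanish on reduced cohomology, which follows since $\gf$ factors (up to homotopy) through a point. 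For property (2): given $f\colon Y\to X$ with $f^*(u)\ne 0$, take any $\psi\colon F\to Y$ with $\cat(\psi)<\wgt(u)$; then $\cat(f\circ\psi)\le\cat(\psi)<\wgt(u)$ using the evident inequality $\cat(f\circ\psi)\le\cat(\psi)$ (a null-homotopy of $\psi$ on an open set gives one of $f\circ\psi$), so $(f\circ\psi)^*(u)=\psi^*(f^*(u))=0$; this proves $\wgt(f^*(u))\ge\wgt(u)$. Applying this with $\psi=1_Y$ at the borderline shows $\cat(f)\ge\wgt(u)$: if $\cat(f)<\wgt(u)$ then $f^*(u)=0$, contradiction.

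For property (4), I would invoke property (2): any $u\in H^s(K(\pi,1);\A)$, $u\ne 0$, pulls back from itself, but the essential input is that for $\gf\colon F\to K(\pi,1)$ with $\cat(\gf)\le s-1$ one has $\gf^*(u)=0$. This is the standard fact that a map into an aspherical space of category at most $s-1$ factors, up to homotopy, through an $(s-1)$-dimensional complex (indeed through the $(s-1)$-skeleton of $K(\pi,1)$, after collapsing the cover to a fat wedge / using the Švarc characterization of category), on which $H^s$ vanishes; I would cite the cuplength/dimension argument of \cite{CLOT} here rather than reprove it.

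The main obstacle is property (3), the product inequality $\wgt(u\cup v)\ge\wgt(u)+\wgt(v)$. The argument is: let $\gf\colon F\to X$ have $\cat(\gf)<\wgt(u)+\wgt(v)$, say $\cat(\gf)=k+\ell$ with $k<\wgt(u)$ and $\ell<\wgt(v)$ — more precisely, choose an open cover $\{U_0,\dots,U_{k+\ell}\}$ of $F$ on each piece of which $\gf$ is null-homotopic, split it as the union of a subfamily of size $\le k$ and one of size $\le\ell$ (after re-indexing so that $k<\wgt(u)$, $\ell<\wgt(v)$ and $k+\ell=\cat(\gf)$), let $V$ and $W$ be the two unions, and consider $\gf^*(u)\in H^*(F;\gf^*\A)$ and $\gf^*(v)\in H^*(F;\gf^*\B)$. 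Since $\gf|_V$ has category $<k+1\le\wgt(u)$... — here is the subtlety: $\gf|_V$ itself need not have small category, but $\gf^*(u)$ restricted to $V$ vanishes because $V$ is covered by $\le k+1$ sets on which $\gf$ is inessential, hence $\gf^*(u)$ comes from $H^*(F,V;\gf^*\A)$; similarly $\gf^*(v)$ comes from $H^*(F,W;\gf^*\B)$; then the relative cup product lands in $H^*(F,V\cup W;\gf^*(\A\otimes\B))=H^*(F,F;-)=0$, so $\gf^*(u\cup v)=\gf^*(u)\cup\gf^*(v)=0$. The delicate point I expect to have to handle carefully is exactly this passage to relative classes and the naturality of the relative cup product with local coefficients — establishing that "$\cat(\gf|_V)<k+1$" (in the sense needed) yields a lift of $\gf^*(u)$ to $H^*(F,V)$ — which is where the homotopy-invariance built into $\wgt$ (as opposed to $\cwgt$) is essential, and I would follow Rudyak \cite{R1} or Strom \cite{S1} for this step rather than redo it from scratch.
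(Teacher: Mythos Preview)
Your sketch is correct and in fact more detailed than what the paper does: the paper's own ``proof'' is simply a reference to \cite[\S 2.7 and Proposition~8.22]{CLOT}, with the remark that the arguments there adapt to local coefficients. The arguments you outline for (1)--(4) are precisely the standard ones found in those references (and in \cite{R1,S1}), so your approach is not different from the paper's---you have just unpacked the citation.

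One small clean-up for part~(3): your worry that ``$\gf|_V$ itself need not have small category'' is unfounded in the sense you need. If $V=U_0\cup\cdots\cup U_{a-1}$ with each $\gf|_{U_i}$ null-homotopic, then by definition $\cat(\gf|_V)\le a-1<\wgt(u)$, so $(\gf|_V)^*u=0$ directly from \defref{def:swgt}; the long exact sequence of the pair $(F,V)$ then lifts $\gf^*u$ to $H^*(F,V;\gf^*\A)$, and the relative cup product finishes as you indicate. No subtlety about homotopy invariance versus $\cwgt$ arises at this step.
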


\begin{proof}
See \cite[\S 2.7 and Proposition 8.22]{CLOT}, the proofs in
loc. cit. can be easily adapted to local coefficient systems.
\end{proof}

\section{Manifolds of LS category~$2$}
\label{four}

In this section we prove that the fundamental group of a closed
connected manifold of LS category~$2$ is free.

\begin{thm}
\label{th:main}
Let~$M$ be a closed connected manifold of dimension~$n\ge 3$.  If the
group~$\pi:=\pi_1(M)$ is not free, then~$\cat M \geq 3$.
\end{thm}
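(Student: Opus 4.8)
The plan is to follow the outline sketched in the introduction: produce a cohomology class on $M$ of category weight $2$, find a Poincaré-dual complementary class, and then invoke the product formula for category weight (Proposition~\ref{prop:swgtprops}(3)) to conclude $\cat M \ge 3$. First I would apply Theorem~\ref{th:free}: since $\pi$ is not free, $\cd_\Z \pi \ge 2$, hence by Lemma~\ref{l:cd} there is a $\zp$-module $A$ with $H^2(\pi; A) \ne 0$. Pick a nonzero class $\bar u \in H^2(K(\pi,1); \scr L(A)) = H^2(\pi; A)$; by Proposition~\ref{prop:swgtprops}(4) it has $\wgt(\bar u) \ge 2$.

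Next I would transfer $\bar u$ to $M$. The standard device is the classifying map $c\colon M \to K(\pi,1)$ inducing the identity on $\pi_1$. The key point is that $c^*\bar u \ne 0$ in $H^2(M; c^*\scr L(A))$; for this I would use that $M$ (up to homotopy) may be built from $K(\pi,1)$ by attaching cells of dimension $\ge 3$, or dually that $c$ can be taken to be $3$-connected — more carefully, one factors $c$ through an inclusion $M \hookrightarrow K(\pi,1)$ that contains the $2$-skeleton $K(\pi,1)^{(2)}$, and since a $2$-dimensional cohomology class is detected on the $2$-skeleton, its restriction is nonzero. Set $u := c^*\bar u$; by Proposition~\ref{prop:swgtprops}(2) we get $\wgt(u) \ge \wgt(\bar u) \ge 2$.

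Now I would produce the complementary class. Let $\A$ denote the local system on $M$ with stalk $A$ (i.e. $c^*\scr L(A)$). By twisted Poincaré duality, Theorem~\ref{th:PD}, the class $u \in H^2(M;\A)$ corresponds to a nonzero homology class $\Delta(u) = u \cap [M] \in H_{n-2}(M; \O \otimes \A)$. Then I invoke Proposition~\ref{l:evaluation} with $k = n-2$: there is a local system $\B$ and a class $v \in H^{n-2}(M;\B)$ with $\Delta(u) \cap v \ne 0$ in $H_0(M; \O\otimes\A\otimes\B)$. Since $\wgt(v) \ge 1$ always (Proposition~\ref{prop:swgtprops}(1)), and — crucially — the nonvanishing of $\Delta(u)\cap v$ together with the associativity/compatibility of cap and cup products forces $u \cup v \ne 0$ in $H^n(M; \A\otimes\B)$, I would get $\wgt(u\cup v) \ge \wgt(u) + \wgt(v) \ge 2 + 1 = 3$. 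Hence by Proposition~\ref{prop:swgtprops}(1), $\cat M \ge \wgt(u\cup v) \ge 3$.

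The main obstacle is the middle step: rigorously justifying that $c^*\bar u \ne 0$ when $n$ may equal $3$ (so there is little room), and — more subtly — checking that $(\Delta(u)) \cap v \ne 0$ genuinely implies $u\cup v \ne 0$ rather than merely that $u \cup v$ pairs nontrivially with $[M]$. This requires the identity $([M] \cap u) \cap v = [M] \cap (u \cup v)$ for cap/cup products with local coefficients, which holds but must be quoted from \cite{Br, Bro} and applied with the correct twisting by the orientation sheaf $\O$. A secondary technical point is keeping track of which $\zp$-module structures (left versus right, and the rule $ag = g^{-1}a$) appear in all the tensor products so that the duality isomorphism and the evaluation pairing compose correctly.
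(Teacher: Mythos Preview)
Your proposal is correct and follows essentially the same route as the paper: produce a nonzero $u\in H^2(K(\pi,1);\A)$ via Stallings--Swan and Lemma~\ref{l:cd}, pull it back along the classifying map (checking injectivity on $H^2$ via the $2$-skeleton), then use twisted Poincar\'e duality and Proposition~\ref{l:evaluation} to find $v$ with $[M]\cap(f^*u\cup v)=([M]\cap f^*u)\cap v\ne 0$, and conclude by the category-weight estimates. The two worries you flag are exactly the points the paper handles in the same way --- the $2$-skeleton argument for injectivity of $f^*$ on $H^2$, and the associativity identity for cap/cup products --- so there is no gap.
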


\begin{proof}
By \theoref{th:free} and \lemref{l:cd}, there a local coefficient
system~$\A$ on~$K(\pi,1)$ such that~$H^2(K(\pi,1);\A)\ne 0$.  Choose a
non-zero element~$u\in H^2(K(\pi,1);\A)$. Let~$f: M \to K(\pi,1)$ be
the map that induces an isomorphism of fundamental groups, and let~$i:
K\to M$ be the inclusion of the~$2$-skeleton. (If~$M$ is not
triangulable, we take~$i$ to be any map of a~$2$-polyhedron that
induces an isomorphism of fundamental groups.)  Then
\[
(fi)^*: H^2(K(\pi,1);\A) \to H^2(K;(fi)^*\A)
\]
is a monomorphism.  In particular, we have~$f^*u\not=0$
in~$H^2(M;(f)^*\A)$.  Now consider the class
\[
a=[M]\cap f^*u\in H_{n-2}(M;\O^{-1}\otimes f^*\A).
\]
Then~$a\ne 0$ by \theoref{th:PD}.  Hence, by
Proposition~\ref{l:evaluation}, there exists a class~$v\in
H^{n-2}(M;\B)$ such that~$a\cap v \ne 0$. We claim that~$f^*u\cup v
\ne 0$. Indeed, one has
\[
[M]\cap (f^*u\cup v)=([M]\cap f^*u)\cap v =a\cap v\ne 0.
\]
Now,~$\wgt f^*u\ge 2$ by \propref{prop:swgtprops}, items (2)
\and~(4). Furthermore,~$\wgt(v)\geq 1$ by \propref{prop:swgtprops},
item~(1).  We therefore obtain the lower bound~$\wgt(f^*u\cup v)\ge 3$
by \propref{prop:swgtprops}, item (3).  Since~$f^*u\cup v\ne 0$, we
conclude that~$\cat M \ge 3$ by \propref{prop:swgtprops}, item (1).
\end{proof}

\begin{cor}
If~$M^n, n\geq 3$ is a closed manifold with~$\cat M\le 2$, then
$\pi_1(M)$ is a free group.
\end{cor}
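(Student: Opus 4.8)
The statement is the contrapositive of \theoref{th:main}, so the plan is essentially to record that reformulation. First I would note that the hypotheses match: $M$ is a closed connected manifold (connectedness being tacit in the standing conventions, or else one argues component-by-component) of dimension $n\geq 3$, and $\cat M\le 2$. I must show $\pi_1(M)$ is free.

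\begin{proof}
Suppose, for contradiction, that $\pi:=\pi_1(M)$ is not free. Since $M$ is a closed connected manifold of dimension $n\ge 3$, \theoref{th:main} applies and yields $\cat M\ge 3$, contradicting the hypothesis $\cat M\le 2$. Hence $\pi_1(M)$ is free.
\end{proof}

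The only genuine content has already been carried out in \theoref{th:main}; the corollary is a pure logical restatement, so there is no real obstacle. If one wanted to be scrupulous, the single point deserving a remark is that \theoref{th:main} is stated for \emph{connected} manifolds, so for the corollary one should either include connectedness in the hypothesis (as is implicit throughout the paper, cf.~the phrasing of \theoref{11}) or observe that a manifold has free fundamental group iff each of its connected components does, and that the LS category of a disjoint union is bounded below by that of any component. Either way the deduction is immediate.
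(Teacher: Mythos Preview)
Your proof is correct and matches the paper's treatment: the corollary is stated immediately after \theoref{th:main} with no separate proof, since it is nothing more than the contrapositive. Your additional remark about connectedness is a reasonable bit of hygiene but not something the paper addresses explicitly.
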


The following Proposition is a special case of ~\cite[Corollary
2]{S2}. Here we give a relatively simple geometric proof.

\begin{prop}\label{p:n>4}
Let $M$ be a closed connected $n$-dimensional PL manifold, $n>4$, with
free fundamental group.  Then $\cat M \le n-2$.
\end{prop}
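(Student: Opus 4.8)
The plan is to show that a closed connected PL $n$-manifold $M$ with free fundamental group and $n > 4$ admits a CW structure (or a handle decomposition) with no cells in dimension $n-1$ and no cells in dimension $n$ except possibly a controllable number, so that $M$ deformation retracts onto an $(n-2)$-complex after removing a point, or more directly has a geometric covering by $n-1$ open sets each contractible in $M$. Concretely, since $\pi := \pi_1(M)$ is free, $K(\pi,1)$ is a wedge of circles, so it is $1$-dimensional; a classifying map $f\colon M \to K(\pi,1)$ can be chosen to land in a $1$-complex, which already suggests the $2$-dimensional cohomology and higher of $M$ ``does not see'' the fundamental group in an essential way. The strategy is to build a handle decomposition of $M$ that reflects this.

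The key steps, in order, would be: First, use the fact that $M$ is a closed PL manifold with free fundamental group to produce a handle decomposition of $M$ in which the $1$-handles can be ``traded'' so that, after handle cancellation against appropriately chosen $2$-handles (using that $\pi_1$ is free and hence the relator part of any presentation can be absorbed), one gets a handle decomposition with a single $0$-handle and with the $1$-skeleton being a wedge of circles carrying $\pi_1$. Second, dualize: a handle decomposition of the closed $n$-manifold $M$ gives, by turning it upside down, a decomposition from the top, so handles of index $n-1$ and $n$ correspond dually to handles of index $1$ and $0$; in particular we can arrange a single $n$-handle, and we want to eliminate $(n-1)$-handles. Third — and this is where $n > 4$ enters — use the Whitney trick / general position in the middle-ish dimension range to cancel or slide handles of index $n-1$ against handles of index $n-2$, which is possible because the relevant intersection numbers can be realized geometrically once the dimension is large enough (the obstruction to cancellation being algebraic, and over a free group ring $\Z[\pi]$ one has enough freedom, since $\Z[\pi]$ for $\pi$ free is a fir/has good module-theoretic properties). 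Fourth, conclude that $M$ has a handle decomposition with handles only of index $0, 1, \dots, n-2$ together with a single $n$-handle; then $M \setminus \{\text{pt}\}$ (removing a point from the interior of the top handle) deformation retracts onto the $(n-2)$-dimensional subcomplex built from the low handles, whence $\cat(M \setminus \{\text{pt}\}) \le n-2$, and therefore $\cat M \le (n-2) + 1$? — no: rather, the complement $M \setminus \{\text{pt}\}$ together with a small ball around the point gives an open cover of $M$ by $n-1$ sets, each contractible in $M$: the ball is contractible, and the $(n-2)$-complex, being of dimension $n-2$, has $\cat \le n-2$, so it is covered by $n-1$ open sets each contractible in the complex, hence in $M$. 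This yields $\cat M \le (n-1) - 1 + \text{(adjust indexing)}$; with the normalization of \cite{CLOT} the count gives exactly $\cat M \le n-2$.

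The main obstacle will be Step 3: cancelling the top-dimensional handles of index $n-1$. The naive count says a handle decomposition coming from a $CW$-structure dual to a minimal one might still carry $(n-1)$-handles whose attaching data is nontrivial in $\pi_{n-2}$ of the lower skeleton. The honest way to handle this, and likely the route the paper takes, is \emph{not} to eliminate $(n-1)$-handles outright but to observe that $M$ minus a point collapses to an $(n-1)$-complex which is the union of the $(n-2)$-skeleton with some $(n-1)$-cells, and then invoke a Morse-theoretic or category-theoretic argument that an $(n-1)$-complex which is ``$(n-1)$-cell-economical'' (e.g. dominated by an $(n-2)$-complex after collapsing, or has trivial $\pi_1$-contribution from the top cells) still has $\cat \le n-2$ — this is essentially where the restriction $n > 4$ bites, since for $n - 1 \ge 4$ one has room for a geometric argument that for $n - 1 = 3$ (i.e. $n = 4$) fails. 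I would therefore focus the write-up on: (a) producing via PL handle theory a decomposition of $M$ with a single $0$-handle, $1$-handles forming a wedge of circles, a single $n$-handle, and (by duality plus the freeness of $\pi$) \emph{no} $(n-1)$-handles at all; and (b) deducing $\cat M \le n-2$ from the resulting cover by a small ball and the $(n-2)$-skeletal part. The freeness of $\pi$ is exactly what lets the $1$-handles be independent circles with no $2$-dimensional relations forced, and dually, what lets the $(n-1)$-handles be cancelled against $(n-2)$-handles; and general position in dimension $n > 4$ is what makes the geometric cancellation legitimate.
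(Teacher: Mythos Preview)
Your plan has a fatal flaw in step (a): a handle decomposition of a closed $n$-manifold with a single $n$-handle and \emph{no} $(n-1)$-handles forces $\pi_1(M)=1$. Indeed, turn the decomposition upside down: the dual has a single $0$-handle, no $1$-handles, and then $2$-handles, etc., so its $2$-skeleton is simply connected and hence $\pi_1(M)=1$. Thus for nontrivial free $\pi_1$ the decomposition you are aiming for simply does not exist, and no amount of handle trading or Whitney-trick cancellation will produce it. (Incidentally, the Whitney trick does not apply to cancelling index-$(n-1)$ against index-$(n-2)$ handles anyway, since one of the complementary dimensions is $2$.)

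Even setting (a) aside, the counting in (b) is off by one: if $M\setminus\{\pt\}\simeq Y$ with $\dim Y=n-2$, the trivial bound gives only $\cat Y\le n-2$, i.e.\ $n-1$ categorical open sets for $Y$; together with the ball that is $n$ sets, so $\cat M\le n-1$, not $n-2$. What is missing is the fact that a $k$-complex with \emph{free} fundamental group has $\cat\le k-1$ for $k\ge 2$, which in turn rests on the lemma that a connected $2$-complex with free $\pi_1$ has $\cat\le 1$ (cf.\ \cite{KRS}). The paper uses precisely this lemma but bypasses handle theory entirely: take a PL triangulation $K$ of $M$ and its dual $L$, so that $M\setminus L^{(l)}\simeq K^{(k)}$ whenever $k+l+1=n$, giving $\cat M\le \cat K^{(k)}+\cat L^{(l)}+1$. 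Choosing any $k,l\ge 2$ (possible exactly when $n\ge 5$), both skeleta carry $\pi_1(M)$, which is free, so the lemma yields $\cat K^{(k)}\le k-1$ and $\cat L^{(l)}\le l-1$, hence $\cat M\le (k-1)+(l-1)+1=n-2$.
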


\begin{proof}
If $X$ is a 2-dimensional (connected) $CW$-complex with free
fundamental group then $\cat X\le 1$, see e.g.~\cite[Theorem
12.1]{KRS}. Hence, if $Y$ is a $k$-dimensional complex with free
fundamental group then $\cat Y\le k-1$ for $k>2$. Now, let $K$ be a
triangulation of $M$, and let $L$ be its dual triangulation.  Then
$M\setminus L^{(l)}$ is homotopy equivalent to $K^{(k)}$ whenever
$k+l+1=n$.  Hence, 
\[
\cat M\le \cat K^{(k)}+\cat L^{(l)}+1.
\]
Since $\pi_1(K)$ and $\pi_1(L)$ are free, we conclude that $\cat
K^{(k)}\le k-1$ and $\cat L^l\le l-1$ for $k,l>1$. Thus $\cat M\le k-1
+l-1 +1=n-2$.
\end{proof}

\section{Manifolds of higher LS category}
\label{higher}

\begin{definition}
\label{def:k-essent}
A~$CW$-space~$X$ is called~$k$-essential,~$k>1$ if for every
$CW$-complex structure on~$X$ there is no map
$f:X^{(k)}\to K(\pi,1)^{(k-1)}$ that induces an isomorphism of the
fundamental groups.
\end{definition}

\begin{thm}\label{th:k-essent}
For every closed~$k$-essential manifold~$M$ with~$\dim M>k$ we have
$\cat M\ge k+1$.
\end{thm}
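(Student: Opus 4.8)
\textbf{Proof proposal for Theorem~\ref{th:k-essent}.}

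The plan is to mimic the structure of the proof of \theoref{th:main}, replacing the role played there by the Stallings--Swan theorem (which produced a $2$-dimensional class) with the hypothesis of $k$-essentiality (which will produce a $k$-dimensional class of category weight $k$). First I would let $\pi=\pi_1(M)$, fix a $CW$-structure on $M$, and take $f\colon M\to K(\pi,1)$ inducing an isomorphism on $\pi_1$. The key point is to extract from the $k$-essentiality hypothesis a cohomology class $u\in H^k(K(\pi,1);\A)$, for a suitable local coefficient system $\A$, whose pullback to the $k$-skeleton $M^{(k)}$ is nonzero. Indeed, if every class in $H^k(K(\pi,1);\A)$ pulled back to zero on $M^{(k)}$ for all $\A$, then the $k$-skeleton of a cellular approximation of $f$ would factor (up to homotopy, after the standard obstruction-theory argument) through the $(k-1)$-skeleton $K(\pi,1)^{(k-1)}$ while still inducing an isomorphism of fundamental groups, contradicting $k$-essentiality. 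So I would argue the contrapositive: non-vanishing of the relevant obstruction forces existence of such a $u$ with $f^*u\ne 0$ in $H^k(M;f^*\A)$, since the $k$-skeleton carries $f^*u$ and $M^{(k)}\hookrightarrow M$ is $k$-connected on cohomology in the appropriate range.

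Next, by \propref{prop:swgtprops}(4) applied to $u\in H^k(K(\pi,1);\A)$ we get $\wgt(u)\ge k$, and by item (2) together with $f^*u\ne 0$ we get $\wgt(f^*u)\ge k$. Now I would invoke twisted Poincar\'e duality: since $\dim M=n>k$, the class $a=[M]\cap f^*u\in H_{n-k}(M;\O\otimes f^*\A)$ is nonzero by \theoref{th:PD} (as $f^*u\ne 0$ and $\Delta$ is an isomorphism). By \propref{l:evaluation} there is a local system $\B$ and a class $v\in H^{n-k}(M;\B)$ with $a\cap v\ne 0$; note $n-k\ge 1$, so $\wgt(v)\ge 1$ by \propref{prop:swgtprops}(1). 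The associativity computation
\[
[M]\cap(f^*u\cup v)=([M]\cap f^*u)\cap v=a\cap v\ne 0
\]
shows $f^*u\cup v\ne 0$, hence $\wgt(f^*u\cup v)\ge \wgt(f^*u)+\wgt(v)\ge k+1$ by item (3), and finally $\cat M\ge k+1$ by item (1).

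The main obstacle I anticipate is the first step: turning the geometric/obstruction-theoretic hypothesis of $k$-essentiality into a concrete statement about a nonzero twisted cohomology class surviving to the $k$-skeleton of $M$. One has to be careful about the direction of the obstruction-theory argument --- $k$-essentiality says a certain retraction/compression does \emph{not} exist, and I need to convert ``no compression of $f|_{M^{(k)}}$ into $K(\pi,1)^{(k-1)}$ rel $\pi_1$'' into ``some primary obstruction, living in $H^k(M^{(k)};\,\pi_{k-1}(\text{fiber})\text{ with twisted coefficients})$, is nonzero.'' The fiber of $K(\pi,1)^{(k-1)}\to K(\pi,1)$ (or rather the relevant relative obstruction group) is $(k-2)$-connected, so the first obstruction to compression lies in $H^k$ with coefficients in a $\Z[\pi]$-module built from $\pi_{k-1}$ of that fiber; this is the module $\A$ I want, and the obstruction class is (the image of) the class $u$. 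Once this identification is made cleanly --- and one must check it is compatible with restriction to skeleta and with the chosen $CW$-structure, which is exactly what \defref{def:k-essent} quantifies over --- the rest is the cuplength-with-weights argument verbatim. It is also worth remarking that \theoref{th:main} is the special case $k=2$, where $k$-essentiality of $M$ follows from non-freeness of $\pi$ via \theoref{th:free} and \lemref{l:cd}.
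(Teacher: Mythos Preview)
Your proposal is correct and follows essentially the same route as the paper: obstruction theory yields a class $\alpha\in H^k(K(\pi,1);E)$ (with $E$ built from $\pi_{k-1}(K(\pi,1)^{(k-1)})$) whose pullback $\tilde f^{*}\alpha\in H^k(M;\tilde f^{*}E)$ is nonzero, and then the category-weight/Poincar\'e-duality/\propref{l:evaluation} argument runs verbatim as in \theoref{th:main}. The only organizational difference is that the paper makes the case split explicit---handling $k=2$ directly via \theoref{th:main} (your compression argument would need $\pi_{k-1}$ of the fiber of $K^{(k-1)}\hookrightarrow K(\pi,1)$ to be abelian, which fails for $k=2$) and running the obstruction argument only for $k\ge 3$---whereas you do the same thing implicitly through your closing remark.
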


\begin{proof}
Let~$M$ be~$k$-essential.

Let~$k=2$. If~$\cat M\le k$, then, by
\theoref{th:main},~$\pi_1(M)$ is free.
Hence there is a map~$f:M\to \vee S^1$ that induces an isomorphism
of the fundamental groups. Thus,~$M$ cannot be 2-essential.

Let~$k\ge 3$. Let~$K=K(\pi_1(M),1)$. Take a map~$f: M^{(k-1)} \to
K^{(k-1)}$ such that the restriction~$f|_{M^{(2)}}$ is the identity
homeomorphism of the 2-skeleta~$M^{(2)}$ and~$K^{(2)}$. We consider
the problem of extension of~$f$ to~$M$.

We claim that the first obstruction ~$o(f)\in H^k(M;E)$
(taken with coefficients in a local system~$E$ with the stalk
$\pi_{k-1}(K^{(k-1)})$) to the extension is not equal to zero.

Indeed, if~$o(f)=0$, then there exists a map~$\ov f:M^{(k)}\to
K^{(k-1)}$ which coincides with~$f$ on the~$(k-2)$-skeleton. The
map~$\ov f_*: \pi_1(M^{(k)})\to \pi_1(K^{(k-1)})$ can be regarded as
an endomorphism of~$\pi_1(M)$ that is identical on generators, and
therefore~$\ov f_*$ is an isomorphism. Hence,~$M$ is
not~$k$-essential.

Consider the commutative diagram
\[
\CD
M^{(k-1)} @>f>> K^{(k-1)} @>{\rm id}>> K^{(k-1)}\\
@ViVV @VjVV @.\\
M @>\tilde f>> K
\endCD
\]
where~$i$ and~$j$ are the inclusions of the skeleta.  Let~$\alpha$ be
the first obstruction to the extension of id to a map~$K \to
K^{(k-1)}$.  By commutativity of the above diagram, we
have~$o(f)=\tilde f^*(\alpha)$.  Now, asserting as in the proof of
\theoref{th:main}, we get that~$\tilde f^*(\alpha)\cup v\ne 0$ for
some~$v$ with~$\dim v=\dim M-k$.  Since~$\dim M>k$, we conclude
that~$\dim v\ge 1$ and thus~$\cat M\ge k+1$.
\end{proof}

\begin{rem}
If a closed manifold~$M^n$ is~$n$-essential then~$\cat M=n$, see
e.g.~\cite{KR1} and~\cite[Theorem 12.5.2]{SGT}.
\end{rem}

\begin{prop}\label{p:nonfree}
For every non-free finitely presented group~$\pi$, there exists a
closed~$4$-dimensional manifold~$M$ with fundamental group~$\pi$
and~$\cat M=3$.
\end{prop}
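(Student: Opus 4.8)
The plan is to produce, for a given non-free finitely presented group $\pi$, a closed $4$-manifold $M$ with $\pi_1(M)=\pi$ and $\cat M=3$. The construction of such an $M$ with $\pi_1(M)=\pi$ is classical: take a presentation $2$-complex for $\pi$, thicken it to a compact $4$-manifold with boundary by embedding in $\R^4$ (or by taking a regular neighborhood of its image under a generic map into $\R^5$ and so on), and then double it, or cap it off, to obtain a closed $4$-manifold. Since attaching a $1$-handle or $2$-handle to build the $2$-complex, and then the $3$- and $4$-handles coming from the doubling, does not change $\pi_1$ (the $3$- and $4$-handles only affect $\pi_i$ for $i\ge 2$), we get $\pi_1(M)=\pi$. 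So the existence part is routine handlebody theory; I would spell it out just enough to pin down that $M$ is closed, connected, and $4$-dimensional.

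The real content is the equality $\cat M=3$. The lower bound $\cat M\ge 3$ is immediate from \theoref{th:main}: $\pi$ is not free, $\dim M=4\ge 3$, so $\cat M\ge 3$. For the upper bound $\cat M\le 3$ I would invoke the general fact that every closed connected $n$-manifold has $\cat\le n$ (e.g. \cite[Theorem 1.7]{CLOT} or the classical Lusternik--Schnirelmann bound $\cat M\le\dim M$ for manifolds), giving $\cat M\le 4$; this alone is not enough. To push it down to $3$, I would use that a closed $n$-manifold has $\cat\le n-1$ as soon as it is \emph{not} $n$-essential --- equivalently, as soon as $\pi_1(M)$ "fits in dimension $n-1$", i.e. there is a map $M\to K(\pi,1)$ whose restriction to $M^{(n-1)}$ is homotopic to a map into the $(n-1)$-skeleton inducing an iso on $\pi_1$; this is exactly the contrapositive of \theoref{th:k-essent} with $k=n$, combined with the remark that $n$-essential closed $M^n$ has $\cat M=n$. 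For a doubled thickening of a $2$-complex the core complex is only $2$-dimensional, so $M$ deformation retracts (away from a point) onto something built from cells of dimension $\le 2$ and the dual handles of dimension $\le 2$; in particular $M$ is not $4$-essential, because the classifying map $M\to K(\pi,1)$ can be taken to miss a point of a top cell and hence to land in $K(\pi,1)$ composed with a $3$-dimensional skeleton argument. So $\cat M\le 3$.

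More concretely, I would argue as in \propref{p:n>4}: if $K$ is the handle decomposition of $M$ with handles of index $0,1,2$ and their duals of index $2,3,4$, write $M=A\cup B$ where $A$ is a regular neighborhood of the $2$-skeleton (handles of index $\le 2$) and $B$ is the complementary regular neighborhood of the dual $1$-skeleton (dual handles of index $\le 2$, i.e. handles of index $\ge 2$ in $M$). Then $A$ is homotopy equivalent to a $2$-complex with fundamental group $\pi$ and $B$ is homotopy equivalent to a $2$-complex as well. By the subadditivity of LS category under such a decomposition, $\cat M\le\cat A+\cat B+1$. The point is then to bound $\cat A$ and $\cat B$ by $1$ each. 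For this I cannot invoke $\cat X\le 1$ for free-$\pi_1$ $2$-complexes, since $\pi$ is non-free; instead I note that $A$ is (homotopy equivalent to) a $2$-dimensional complex, so $\cat A\le 2$, and likewise $\cat B\le 2$, giving only $\cat M\le 5$ --- not good enough.

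So the clean route is the $k$-essentiality one: I would show $M$ is not $4$-essential and not even... Actually the sharp statement needed is: a closed $n$-manifold that is not $n$-essential satisfies $\cat M\le n-1$ (this is standard, cf. \cite{KR1} and the remark after \theoref{th:k-essent}, read contrapositively). Since $M$ is obtained by doubling a thickened $2$-complex, its classifying map $M\to K(\pi,1)$ factors (up to homotopy, after removing a point) through a complex of dimension $\le 3$ --- more precisely, $M\setminus\{pt\}$ collapses onto the union of the $2$-skeleton and the dual $1$-skeleton, which has dimension $\le 3$ --- so the obstruction analysis of \theoref{th:k-essent} shows $M$ is not $4$-essential, whence $\cat M\le 3$. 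Combined with $\cat M\ge 3$ from \theoref{th:main}, we conclude $\cat M=3$. The main obstacle, and the step I would be most careful about, is the upper bound: making precise the claim that the doubled thickening of a presentation $2$-complex is not $4$-essential (equivalently, that its classifying map can be deformed off a top cell into a $3$-dimensional subcomplex), and invoking the correct "not $n$-essential $\Rightarrow$ $\cat\le n-1$" statement with the right citation; everything else (existence of $M$ with the right $\pi_1$, the lower bound) is immediate from results already in the paper.
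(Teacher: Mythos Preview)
Your eventual strategy is exactly the paper's: build a closed $4$-manifold $M$ with $\pi_1(M)=\pi$ whose classifying map to $K(\pi,1)$ factors through a $2$-complex, so that $M$ is not $4$-essential and hence $\cat M\le 3$ (note this is the \emph{converse} of the remark after \theoref{th:k-essent}, available from \cite{KR1}, not a case of \theoref{th:k-essent} itself, which requires $\dim M>k$), while $\cat M\ge 3$ comes from \theoref{th:main}. The paper's execution is a single clean line---take $M=\partial N$ where $N$ is a regular neighborhood of $K(\pi,1)^{(2)}$ embedded in $\R^5$, so the composite $M\hookrightarrow N\simeq K(\pi,1)^{(2)}$ gives the factorization immediately---whereas your write-up detours through an unhelpful $A\cup B$ subadditivity attempt, proposes embedding a $2$-complex in $\R^4$ (not always possible), and miscounts the handles of the double (the second copy of $W$ contributes index-$2$ handles as well, though these are attached along meridians of the original $2$-handles and so do not affect $\pi_1$; alternatively, the fold map $DW\to W$ makes both $\pi_1(DW)=\pi$ and the factorization through a $2$-complex evident at once).
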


\begin{proof}
Take an embedding of a~$2$-skeleton of~$K(\pi,1)$ in~$\R^5$ and
let~$M$ be the boundary of the regular neighborhood of this skeleton.
Then, clearly,~$\pi_1(M)=\pi$. Furthermore,~$M$ admits a retraction
onto its~$2$-skeleton. Therefore~$M$ is not~$4$-essential, and hence
$\cat M = 3$.
\end{proof}

Let~$M_f$ be the mapping cylinder of~$f:X\to Y$.  We use the notation
$\pi_*(f)=\pi_*(M_f,X)$.  Then~$\pi_i(f)=0$ for~$i\le n$ amounts to
saying that it induces isomorphisms~$f_*:\pi_i(X_1)\to \pi_i(Y_1)$ for
$i\le n$ and an epimorphism in dimension~$n+1$. Similar notation
$H_*(f)=H_*(Mf,X)$ we use for homology.

\begin{lemma}
\label{l:join} Let~$f_j:X_j\to Y_j$ be a family of maps of~$CW$
spaces such that~$H_i(f_j)=0$ for~$i<n_j$. Then~$ H_i(f_1\wedge
\cdots \wedge f_s)=0$ for~$i\le \min\{n_j\}$.
\end{lemma}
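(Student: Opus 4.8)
The plan is to reduce the statement about the smash product $f_1 \wedge \cdots \wedge f_s$ to the case $s = 2$ by induction, and then to analyze a single smash $f_1 \wedge f_2$ via a relative K\"unneth-type argument. First I would recall that, up to homotopy, the pair $(M_{f_j}, X_j)$ can be replaced by a CW pair, so that $H_i(f_j) = H_i(M_{f_j}, X_j)$ is computed by a relative CW chain complex; the hypothesis $H_i(f_j) = 0$ for $i < n_j$ means this relative complex is $(n_j-1)$-connected in the homological sense. The key point is that smashing is compatible with forming mapping cylinders up to homotopy: $M_{f_1 \wedge f_2}$ is homotopy equivalent, rel $X_1 \wedge X_2$, to a space built from $M_{f_1} \wedge M_{f_2}$ by collapsing the appropriate subspaces, so that $H_*(f_1 \wedge f_2)$ is the homology of a tensor product (over $\Z$) of the two relative chain complexes, possibly with a Tor term.

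The main step is then the following: if $C_*$ and $D_*$ are nonnegatively graded chain complexes of free abelian groups with $H_i(C) = 0$ for $i < n_1$ and $H_i(D) = 0$ for $i < n_2$, then $H_i(C \otimes D) = 0$ for $i < n_1 + n_2$. This is immediate from the algebraic K\"unneth theorem, since every summand $H_p(C) \otimes H_q(D)$ with $p + q = i$ and every Tor summand $\mathrm{Tor}(H_p(C), H_q(D))$ with $p + q = i - 1$ vanishes once $i < n_1 + n_2$ (as then either $p < n_1$ or $q < n_2$, respectively $p < n_1$ or $q < n_2$ in the Tor term as well). Translating back, this gives $H_i(f_1 \wedge f_2) = 0$ for $i \le \min\{n_1, n_2\}$; note that the conclusion of the lemma is weaker than what this argument yields (it only claims vanishing up to $\min\{n_j\}$, not up to $\sum n_j - 1$), which makes the bookkeeping comfortable. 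To be careful about basepoints, I would work throughout with well-pointed spaces and use reduced chains, so that $\widetilde C_*(X \wedge Y) \cong \widetilde C_*(X) \otimes \widetilde C_*(Y)$ after CW approximation, and the relative cylinder chains behave accordingly.

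For the inductive step from $s - 1$ to $s$, I would set $g = f_1 \wedge \cdots \wedge f_{s-1}$, apply the inductive hypothesis to conclude $H_i(g) = 0$ for $i < \min\{n_1, \ldots, n_{s-1}\}$ — actually one gets the stronger bound $i < n_1 + \cdots + n_{s-1}$, which I would carry along to keep the induction clean — and then apply the two-factor case to $g \wedge f_s = f_1 \wedge \cdots \wedge f_s$. The only mild subtlety is verifying the homotopy equivalence $M_{f \wedge g} \simeq$ (a quotient of $M_f \wedge M_g$) rel the subspace $X \wedge Y$ at the chain level; I expect this to be the main obstacle, and I would handle it by replacing each $f_j$ by a cofibration (an inclusion of a CW subcomplex) up to homotopy, after which $M_{f_j} \simeq Y_j$ rel nothing helpful, so instead I would argue directly with the pair $(Y_j, X_j)$: since $H_*(Y_j, X_j) = H_*(f_j)$, and smash products of CW pairs satisfy the relative K\"unneth formula $\widetilde C_*((Y_1, X_1) \wedge (Y_2, X_2)) \simeq \widetilde C_*(Y_1, X_1) \otimes \widetilde C_*(Y_2, X_2)$ (because $(Y_1 \wedge Y_2)/((Y_1 \wedge X_2) \cup (X_1 \wedge Y_2))$ has reduced cellular chains the tensor product of the relative cellular chains), the algebraic K\"unneth theorem finishes the proof. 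This keeps everything on the level of cellular chain complexes and avoids any delicate point-set topology.
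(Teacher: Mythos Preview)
There is a genuine gap. You identify $H_*(f_1\wedge f_2)$ with the homology of the tensor product $C_*(Y_1,X_1)\otimes C_*(Y_2,X_2)$ (after replacing the $f_j$ by cofibrations $X_j\hookrightarrow Y_j$), but that tensor product computes the reduced homology of $(Y_1/X_1)\wedge(Y_2/X_2)$, i.e.\ the relative homology of the pair
\[
\bigl(Y_1\wedge Y_2,\ (X_1\wedge Y_2)\cup(Y_1\wedge X_2)\bigr),
\]
whereas $H_*(f_1\wedge f_2)=H_*\bigl(Y_1\wedge Y_2,\ X_1\wedge X_2\bigr)$ involves the much smaller subspace $X_1\wedge X_2$. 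These two relative homologies are different in general. A concrete counterexample: take $f_1=\id_{S^1}$ (so $H_*(f_1)=0$, i.e.\ $n_1=\infty$) and $f_2:\pt\to S^1$ (so $n_2=1$). Then $f_1\wedge f_2:\pt\to S^2$ and $H_2(f_1\wedge f_2)=\Z\ne 0$, while your tensor product is identically zero. This also shows that your ``stronger bound'' $i<n_1+n_2$ is simply false; the correct bound really is only $i\le\min\{n_j\}$, and the fact that your argument yields more should have been a warning sign.

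The paper's proof avoids this trap by not attempting a relative K\"unneth at all. It observes that $M_{f_1\wedge\cdots\wedge f_s}\simeq M(f_1)\wedge\cdots\wedge M(f_s)$ and then runs the long exact sequence of the pair $\bigl(M(f_1)\wedge\cdots\wedge M(f_s),\,X_1\wedge\cdots\wedge X_s\bigr)$, applying the \emph{absolute} K\"unneth formula for smash products to each of the two terms separately. Since each $(f_j)_*:\tilde H_p(X_j)\to\tilde H_p(Y_j)$ is an isomorphism for $p\le n_j-2$ and an epimorphism for $p=n_j-1$, one checks directly that the map $\tilde H_i(X_1\wedge\cdots\wedge X_s)\to\tilde H_i(Y_1\wedge\cdots\wedge Y_s)$ is an isomorphism for $i<\min\{n_j\}$ and an epimorphism for $i=\min\{n_j\}$, which gives the stated vanishing. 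If you want to salvage your inductive plan, you need to replace the relative K\"unneth step by this long-exact-sequence argument for the two-factor case.
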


\begin{proof}
Note that~$M(f_1\wedge \cdots \wedge f_s)\cong Y_1\wedge \cdots \wedge
Y_s\cong M(f_1)\wedge \cdots \wedge M(f_s)$. Now, by using the
K\"unneth formula and considering the homology exact sequence of the
pair~$(M(f_1)\wedge\cdots \wedge M(f_s), X_1\wedge \cdots \wedge X_s)$
we get the result.
\end{proof}

\begin{prop}\label{p:join}
Let~$f_j:X_j\to Y_j$,~$3\le j\le s$ be a family of maps of~$CW$
spaces such that~$\pi_i(f_j)=0$ for~$i<n_j$.  Then the joins
satisfy
\[
\pi_k(f_1\ast f_2\ast\dots\ast f_s)=0
\]
for~$k\le\min\{n_j\}+s-1$.
\end{prop}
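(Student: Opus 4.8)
The plan is to reduce the statement about joins of maps to the statement about smash products already handled in \lemref{l:join}, by exploiting the standard relation between joins and (reduced) suspensions of smash products. Recall that for based spaces one has a homotopy equivalence $X_1 \ast \cdots \ast X_s \simeq \Sigma^{s-1}(X_1 \wedge \cdots \wedge X_s)$, and, more importantly, this equivalence is natural: a family of maps $f_j : X_j \to Y_j$ induces a map of joins which, under the equivalence, corresponds to $\Sigma^{s-1}(f_1 \wedge \cdots \wedge f_s)$. Consequently the mapping cone / mapping cylinder pair of $f_1 \ast \cdots \ast f_s$ is, up to homotopy, the $(s-1)$-fold suspension of the pair $(M(f_1 \wedge \cdots \wedge f_s), X_1 \wedge \cdots \wedge X_s)$, so the relative homotopy groups are related by a shift in degree.

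First I would record the hypothesis $\pi_i(f_j) = 0$ for $i < n_j$ and convert it into a homology statement: by the relative Hurewicz theorem, $\pi_i(f_j)=0$ for $i<n_j$ implies $H_i(f_j) = 0$ for $i < n_j$ (here the hypothesis $j \ge 3$, hence $n_j \ge$ small values being excluded, together with simple-connectivity of the relevant pairs obtained from the join construction, is what makes Hurewicz applicable — the joins of at least three factors are simply connected). Then \lemref{l:join} gives $H_i(f_1 \wedge \cdots \wedge f_s) = 0$ for $i \le \min\{n_j\}$. Applying the suspension isomorphism $(s-1)$ times, the pair $(M(f_1 \ast \cdots \ast f_s), X_1 \ast \cdots \ast X_s)$ has vanishing homology through degree $\min\{n_j\} + s - 1$. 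Finally, since joins of $\ge 2$ factors are simply connected (indeed highly connected), I can run the relative Hurewicz theorem in the other direction to upgrade this homological vanishing back to $\pi_k(f_1 \ast \cdots \ast f_s) = 0$ for $k \le \min\{n_j\} + s - 1$, which is exactly the claim.

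The main obstacle I anticipate is bookkeeping the connectivity and basepoint hypotheses carefully enough to legitimately apply relative Hurewicz in both directions. One has to check that the mapping cylinder pairs in question are simply connected: for $\pi_i(f_1 \wedge \cdots \wedge f_s)$ and for the join this follows because smashing and joining with enough factors kills $\pi_1$, but one should verify that $n_j$ being at least $1$ (or whatever the precise lower bound is) together with $s \ge 3$ suffices, and handle the edge case where some $n_j$ is as small as possible. A secondary technical point is to make sure the natural homeomorphism $M(f_1 \wedge \cdots \wedge f_s) \cong Y_1 \wedge \cdots \wedge Y_s$ used in the proof of \lemref{l:join}, together with the join–suspension identification, is genuinely compatible with the maps $f_j$, so that the degree shift by $s-1$ is correctly placed; this is standard but should be stated cleanly rather than left implicit.
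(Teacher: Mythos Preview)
Your approach is essentially identical to the paper's: convert the homotopy hypothesis on each $f_j$ to a homology vanishing via relative Hurewicz, apply \lemref{l:join} to the smash product, use the equivalence $A_1\ast\cdots\ast A_s\simeq \Sigma^{s-1}(A_1\wedge\cdots\wedge A_s)$ to shift degrees by $s-1$, and then run Hurewicz back using the simple connectivity of the iterated join. The only small clarifications are that the paper invokes the non-simply-connected form of the relative Hurewicz theorem (Hatcher, Theorem~4.37) for the first step, and that the hypothesis ``$3\le j\le s$'' should be read as $s\ge 3$ (at least three join factors), which is exactly what guarantees $X_1\ast\cdots\ast X_s$ is simply connected for the final step---not a constraint on the individual $n_j$.
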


\begin{proof}
By the version of the Relative Hurewicz Theorem for non-simply
connected~$X_j$ \cite[Theorem 4.37]{Ha}, we obtain~$H_i(f_j)=0$ for
$i<n_j$.  By \lemref{l:join} we obtain that~$H_k(f_1\wedge\cdots\wedge
f_s)=0$ for~$k\le\min\{n_j\}$. Since the join~$A_1\ast\dots\ast A_s$ is homotopy equivalent to the iterated suspension~$\Sigma^{s-1}(A_1\wedge\dots\wedge A_s)$ over the smash product,
we conclude that~$H_k(f_1\ast\dots\ast f_s)=0$ for~$k\le\min\{n_j\}+s-1$.
Since~$X_1\ast\dots\ast X_s$ is
simply connected for~$s\ge 3$, by the standard Relative Hurewicz
Theorem we obtain that~$\pi_k(f_1\ast\dots \ast f_s)=0$ for
$k\le \min\{n_j\}+s-1$.
\end{proof}

Given two maps~$f:Y_1\to X$ and~$g:Y_2\to X$, we set
\[
Z=\{(y_1,y_2,t)\in Y_1\ast Y_2\mid f(y_1)=g(y_2)\}
\]
and define the {\em fiberwise join}, or {\em join over~$X$} of~$f$
and~$g$ as the map
\[
f{\ast_X}g:Z\to X,\quad (f{\ast_X}g)(y_1,y_2, t)=f(y_1)
\]
Let~$p_0^X:PX\to X$ be the Serre path fibration. This means that
$PX$ is the space of paths on~$X$ that start at the base point of the
pointed space~$X$, and~$p_0(\alpha)=\alpha(1)$.  We denote by
$p_n^X;G_n(X)\to X$ the~$n$-fold fiberwise join of~$p_0$.

The proof of the following theorem can be found in \cite{CLOT}.

\begin{thm}[Ganea, \v Svarc]
\label{t:ganea}
For a~$CW$-space~$X$,~$\cat(X)\le n$ if and only if there exists a
section of~$p_n:G_n(X)\to X$.
\end{thm}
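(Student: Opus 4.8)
The plan is to prove both implications directly, working with the explicit description of points of the iterated fiberwise join and patching via partitions of unity; this is essentially Ganea's argument, which can also be read off from \v Svarc's theory of the genus (sectional category) of a fibration. First I would record the description of $G_n(X)$: a point over $x\in X$ is a formal convex combination $\sum_{i=0}^{n}t_i\alpha_i$, where each $\alpha_i$ is a path in $X$ from the basepoint to $x$ (i.e.\ $\alpha_i\in p_0^{-1}(x)$), the scalars satisfy $t_i\ge 0$ and $\sum_i t_i=1$, and the standard join identifications are imposed, so that a summand with $t_i=0$ is discarded. The point to retain is that the coordinate functions $t_i$ are continuous on $G_n(X)$, and on the open set $\{t_i>0\}$ the assignment $\sum_j t_j\alpha_j\mapsto\alpha_i$ is a well-defined continuous map to $PX$.

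For the implication that a section of $p_n$ exists implies $\cat X\le n$, suppose $s\colon X\to G_n(X)$ is a section and write $s(x)=\sum_{i=0}^n t_i(x)\alpha_i(x)$. I would set $U_i=\{x\in X: t_i(x)>0\}$; these are open, and since $\sum_i t_i(x)=1$ for all $x$, they cover $X$. Over $U_i$ the formula $x\mapsto\alpha_i(x)$ defines a continuous section $\sigma_i\colon U_i\to PX$ of the path fibration, so the inclusion $U_i\hookrightarrow X$ factors as $p_0\circ\sigma_i$. As $PX$ is contractible, $\sigma_i$ is null-homotopic, hence so is $U_i\hookrightarrow X$; thus $\{U_0,\dots,U_n\}$ is a categorical open cover and $\cat X\le n$.

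For the converse I would start from a categorical open cover $\{U_0,\dots,U_n\}$ of $X$ with each inclusion $j_i\colon U_i\hookrightarrow X$ null-homotopic. The constant map $U_i\to X$ at the basepoint lifts to the constant path in $PX$, so feeding a null-homotopy of $j_i$ into the homotopy lifting property of the path fibration $p_0$ (a Hurewicz fibration) yields a section $\sigma_i\colon U_i\to PX$ of $p_0$ over $U_i$, with $p_0\circ\sigma_i=j_i$. Since $X$ is a $CW$-space, hence paracompact, I would pick a partition of unity $\{\phi_i\}_{i=0}^n$ subordinate to $\{U_i\}$ and define $s\colon X\to G_n(X)$ by $s(x)=\sum_{i=0}^n\phi_i(x)\,\sigma_i(x)$. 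This is well defined since $\phi_i(x)\neq 0$ forces $x\in U_i$, so that $\sigma_i(x)$ is defined and lies in $p_0^{-1}(x)$, while summands with $\phi_i(x)=0$ drop out of the join; one then checks that $s$ is continuous and $p_n\circ s=\id_X$.

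The genuinely technical part, which I would relegate to \cite{CLOT}, is the point-set bookkeeping of the iterated fiberwise join: verifying that $p_n$ is again a fibration, that the functions $t_i$ and the partial path-extractions $\alpha_i$ behave continuously on the relevant open sets, and that the convex-combination formula for $s$ is continuous globally on $X$ rather than merely on each $U_i$. I expect this to be the main obstacle; by contrast the conceptual content — exchanging a global section of the join for a cover by open sets that are contractible in $X$, and back again via a partition of unity — is straightforward. One could alternatively derive the statement from the two facts that $\cat X$ coincides with the \v Svarc genus of $p_0\colon PX\to X$ and that the genus of a fibration is at most $n$ iff its $(n{+}1)$-fold fiberwise join admits a section, but this only repackages the same argument.
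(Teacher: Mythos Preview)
Your outline is correct and is essentially the standard Ganea--\v Svarc argument. Note, however, that the paper does not supply its own proof of this theorem: it simply states that the proof can be found in \cite{CLOT} and moves on. So there is nothing to compare against beyond observing that your sketch is exactly the argument one finds in that reference (and in Ganea's and \v Svarc's original papers): trade a section of the iterated fiberwise join for a categorical open cover via the barycentric coordinates $t_i$, and go back via a partition of unity and lifts of null-homotopies through the path fibration. Your identification of the delicate points --- continuity of the partial extractions $\alpha_i$ on $\{t_i>0\}$, the fibration property of $p_n$, and global continuity of the glued section --- is accurate, and deferring them to \cite{CLOT} is precisely what the paper does as well.
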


\begin{prop}\label{p:sum}
The connected sum~$S^k\times S^l\#\cdots \#S^k\times S^l$ is a space
of LS-category~$2$.
\end{prop}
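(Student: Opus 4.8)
The plan is to establish the upper bound $\cat\le 2$ and the lower bound $\cat\ge 2$ separately, the upper bound being the substantive part. Write $N=\#_r\,(S^k\times S^l)$ for the $r$-fold connected sum (with $k,l\ge 1$). For the lower bound, note that $N$ has non-trivial cup products: the class dual to an $S^k$-factor cups non-trivially with the class dual to the corresponding $S^l$-factor, so $\cuplength(N)\ge 2$ and hence $\cat N\ge 2$ by the classical cuplength bound (a special case of \propref{prop:swgtprops}(1) and (3)). Alternatively, since $N$ is not a homotopy sphere, $\cat N\ge 2$ by the remark in the introduction that a closed manifold of LS category $1$ is a homotopy sphere.

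For the upper bound I would use the Ganea--\v Svarc characterization, \theoref{t:ganea}: it suffices to construct a section of $p_2\colon G_2(N)\to N$. The key geometric observation is that $N$ minus a point retracts onto a wedge of spheres. Indeed, removing an open disc from $N$ yields a manifold with boundary that is homotopy equivalent to $(S^k\vee S^l)^{\vee r}$, a one-dimensional-higher analogue being irrelevant; more precisely $N\setminus\{pt\}\simeq \bigvee^{r}(S^k\vee S^l)$, a wedge of spheres, which has LS category $1$. Now cover $N$ by two open sets: $U_0$, a small open disc around a chosen point $p$, which is contractible so $\cat$-categorical; and $U_1=N\setminus\{p\}$, which deformation retracts onto the wedge of spheres and is therefore categorical (contractible in $N$, or rather: each piece must be contractible in $N$ — here one needs the covering to consist of sets each \emph{null-homotopic in $N$}). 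Since a wedge of spheres of positive dimension is simply connected when $k,l\ge 2$, and in general has free fundamental group, its inclusion into $N$ need not be null-homotopic, so a naive $2$-set cover does not immediately work; instead one argues at the level of Ganea fibrations or uses that $\cat$ is subadditive under the relevant decomposition. The cleanest route: $N=D^n\cup_{\partial}(N\setminus \mathring D^n)$, and $\cat$ of a space obtained by attaching a cell to a complex $X$ along a map is at most $\cat X+1$; since $N\setminus\mathring D^n\simeq \bigvee(\text{spheres})$ has $\cat=1$, we get $\cat N\le 1+1=2$. Combined with the lower bound, $\cat N=2$.

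The main obstacle I anticipate is making the attaching argument precise: one must check that $N$ is homotopy equivalent to the mapping cone of the attaching map $\varphi\colon S^{n-1}\to \bigvee^r(S^k\vee S^l)$ of the top cell, and invoke the standard inequality $\cat(\text{cone}\,\varphi)\le \cat(\text{base})+1$ (see \cite[Theorem 1.37 or thereabouts]{CLOT}); this is routine but requires that the complement of an open disc in a closed connected $n$-manifold be homotopy equivalent to an $(n-1)$-dimensional complex, here specifically the stated wedge of spheres, which follows from handle decomposition / the fact that $N\setminus\{pt\}$ has the homotopy type of its $(n-1)$-skeleton and, for the connected sum of $S^k\times S^l$'s, that skeleton is exactly $\bigvee^r(S^k\vee S^l)$. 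Once that homotopy equivalence is in hand, everything else is immediate from results already cited.
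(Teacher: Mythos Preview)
Your argument is correct and, after the detour through the failed naive cover, lands on exactly the approach the paper uses: the paper simply notes that after removing a point the manifold is homotopy equivalent to a wedge of spheres, from which $\cat\le 2$ follows by the cell-attaching inequality you cite. The paper also mentions an alternative route via Hardy's result \cite{H} on the category of a double mapping cylinder (the connected sum being such a cylinder), but your version is essentially the paper's second proof written out in full.
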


\begin{proof}
This can be deduced from a general result of K. Hardy~\cite{H} because
the connected sum of two manifolds can be regarded as the double
mapping cylinder. Alternatively, one can note that, after removing a
point, the manifold on hand is homotopy equivalent to the wedge of
spheres.
\end{proof}

\begin{thm}
\label{t:high}
For every finitely presented group~$\pi$ and~$n\ge 5$, there is a
closed~$n$-manifold~$M$ of LS-category~$3$ with~$\pi_1(M)=\pi$.
\end{thm}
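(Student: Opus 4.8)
The plan is to construct~$M$ as the boundary of a regular neighborhood of a suitable $2$-complex, mimicking the argument of \propref{p:nonfree} but with an extra twist to force the LS-category to equal exactly~$3$ rather than possibly drop lower. First I would fix a finite $2$-complex~$K_0$ with~$\pi_1(K_0)=\pi$ (a presentation complex of~$\pi$), and consider an embedding~$K_0\hookrightarrow \R^{n+1}$ for~$n\ge 5$, which exists since~$2\cdot 2 + 1 = 5 \le n+1$. Let~$M^n=\partial U$ be the boundary of a regular neighborhood~$U$ of~$K_0$ in~$\R^{n+1}$; this is a closed orientable $n$-manifold. The regular neighborhood~$U$ deformation-retracts onto~$K_0$, so~$\pi_1(M)=\pi_1(U)=\pi$ (using that~$n\ge 5$ guarantees general position moves the $2$-complex off of any loop in~$M$, and the inclusion~$M\hookrightarrow U$ is $2$-connected by the standard general-position argument, $n = \dim M \ge 2\cdot 2$).

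Next I would establish the upper bound~$\cat M\le 3$. Since~$U$ retracts onto a $2$-complex, $M=\partial U$ sits inside~$U$ as the boundary, and~$U$ is a compact manifold-with-boundary of dimension~$n+1$ which collapses to~$K_0$; dually, $M$ decomposes as the double of a thickening, or more directly: $M$ admits a handle decomposition with handles of index~$\le 2$ coming from~$U$ and handles of index~$\ge n-2$ coming from the "outside" collar. Concretely, $M\setminus K_0'$ (where~$K_0'$ is a spine pushed slightly into~$U$) deformation retracts onto a complex of dimension~$\le n-3$, and on the other side~$M$ retracts onto (a copy of)~$K_0$ of dimension~$2$; in the range~$n\ge 5$ one has~$2 + (n-3) + 1 = n$ and a Hardy-type double-mapping-cylinder argument (\propref{p:sum}, \thmref{t:ganea}) or direct covering by two categorical sets plus one more gives~$\cat M\le 1 + (n-4) + \ldots$. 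Rather than this, the cleaner route: $M$ is homotopy equivalent to a CW-complex obtained from~$K_0$ by attaching cells of dimension~$\ge n-2\ge 3$, hence after removing a point~$M\setminus\{pt\}$ still has the homotopy type of a complex built from a $2$-complex, so it is categorically covered using the Ganea construction to yield~$\cat M\le 3$. The safest argument, which I would write out, is: take the decomposition~$M = V_1\cup V_2$ with~$V_1$ a neighborhood retracting to~$K_0$ (so~$\cat_M V_1 \le 2$ since a $2$-complex with arbitrary~$\pi_1$ has~$\cat\le 2$) and~$V_2$ a neighborhood retracting to a complex of dimension~$\le n-3$ which is contractible in~$M$ after one further subdivision, giving two sets; combined with one additional Lusternik--Schnirelmann set this yields~$\cat M\le 3$.

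Then I would prove the lower bound~$\cat M\ge 3$. Here I distinguish two cases. If~$\pi$ is not free, then~$\cat M\ge 3$ is immediate from~\thmref{th:main}. If~$\pi$ is free (including~$\pi$ trivial), then~$K_0$ can be taken to be a wedge of circles and~$M$ is a connected sum of copies of~$S^1\times S^{n-1}$ (and possibly~$S^n$); to force~$\cat\ge 3$ in this case I would instead modify the construction by taking~$K_0$ to be a wedge of a circle (or the $2$-complex for~$\pi$) with an~$S^2$, i.e. thicken~$K_0\vee S^2$, so that~$M$ contains a copy of~$S^2\times S^{n-2}$ as a connected summand; then the cuplength of~$M$ is at least~$2$ via the classes pulled back from~$S^2$ and~$S^{n-2}$, and combined with a class of category weight~$\ge 2$ (when~$\pi\ne 1$) or with the standard argument for~$S^2\times S^{n-2}$-summands, one gets~$\cat M\ge 3$. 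Actually the uniform fix: build~$M$ from~$K_0\vee S^2$ where~$K_0$ realizes~$\pi$, so~$M$ always retracts onto a $2$-complex with~$\pi_1=\pi$ (giving~$\cat M\le 3$ as above) and~$M$ always has an~$S^2\times S^{n-2}$ summand (giving $\cat M \ge \cuplength(M)+1 \ge 3$). This handles all~$\pi$ uniformly.

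\textbf{Main obstacle.} The genuinely delicate point is the upper bound~$\cat M\le 3$: one must verify that the regular-neighborhood-boundary of a $2$-complex (wedged with~$S^2$), in dimension~$n\ge 5$, has LS-category exactly~$3$ and not more. The cleanest way I expect to carry this out is to show directly that~$M$ is homotopy equivalent to a CW-complex of the form~$(\text{$2$-complex})\cup(\text{cells of dimension}\ \ge n-2)$, realize this as a pushout/double mapping cylinder, and invoke Hardy's theorem~\cite{H} (as in~\propref{p:sum}) together with the fact that a $2$-complex has category~$\le 2$; the arithmetic~$n\ge 5$ is exactly what makes the high-dimensional cells contribute only~$+1$ to the category. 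The other steps — identifying~$\pi_1$, and the lower bound — are routine given~\thmref{th:main} and the cuplength estimate~$\cat\ge\cuplength+1$ (item (1) and (3) of~\propref{prop:swgtprops}).
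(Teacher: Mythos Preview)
Your construction for non-free~$\pi$ --- the boundary of a regular neighborhood of a presentation $2$-complex $K_0\subset\R^{n+1}$ --- is in fact the same manifold the paper builds: thickening the $1$-skeleton of $K_0$ gives $\#_s(S^1\times S^{n-1})$, and attaching the $2$-handles for the relators is exactly the surgery performed in the paper. So the object is right; the issues are in the proof.

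The genuine gap is the upper bound $\cat M\le 3$. None of your sketches establishes it. In your decomposition $M=V_1\cup V_2$, the set $V_1$ cannot literally retract to $K_0$ (which does not sit inside $M$), and even granting $\cat_M V_1\le 2$, your $V_2$ has no reason to have $\cat_M V_2=0$; its natural spine is an $(n-3)$-connected complex of dimension $n-1$, which gives $\cat_M V_2\le 1$, and subadditivity then yields only $\cat M\le 4$. The ``cells of dimension $\ge n-2$'' viewpoint fares no better: attaching three successive layers of cells (dimensions $n-2$, $n-1$, $n$) to a $2$-complex can a priori raise category by $3$, and Hardy's theorem controls a single double mapping cylinder, which you have not exhibited. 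The paper's proof of $\cat M\le 3$ uses a genuinely different mechanism: the surgery trace $X$ (which is $M'\times I$ with $2$-handles, hence $\cat X\le 3$) receives an inclusion $f:M\hookrightarrow X$ that is $(n-2)$-connected, since dually $X$ is $M\times I$ with $(n-1)$-handles. One then pulls back a section of the Ganea--\v Svarc fibration $p_3^X$ to $M$ and lifts it to a section of $p_3^M$; the obstruction lies in the homotopy fiber of the comparison map, which is the $4$-fold join $\Omega f*\Omega f*\Omega f*\Omega f$ and hence, by \propref{p:join}, is $(n-1)$-connected when $n\ge 5$. This connectivity-plus-join step is the real content, and your proposal contains no substitute for it.

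There is also an error in the free case. In the normalization used here one has $\cat\ge\cuplength$, not $\cuplength+1$; the summand $S^2\times S^{n-2}$ has cuplength $2$ and indeed $\cat(S^2\times S^{n-2})=2$. In a connected sum, positive-degree classes supported on different summands cup to zero, so wedging an $S^2$ onto $K_0$ cannot raise the cuplength above $2$, and your $M$ may well have $\cat M=2$ when $\pi$ is free (in particular when $\pi$ is trivial). The paper handles free $\pi$ by a separate construction, taking $M=(\#_s\,S^1\times S^2)\times S^{n-3}$, which has cuplength $3$.
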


\begin{proof}
If the group~$\pi$ is the free group of rank~$s$, we let~$M'$ be the
$k$-fold connected sum~$S^1\times S^{2} \# \cdots \#S^1\times S^{2}$.
Then~$M'$ is a closed~$3$-manifold of LS category~$2$ with
$\pi_1(M')=F_s$.  Then the product manifold~$M=M'\times S^{n-3}$ has
cuplength~$3$ and is therefore the desired manifold.

Now assume that the group~$\pi$ is not free.  We fix a presentation
of~$\pi$ with~$s$ generators and~$r$ relators.  Let~$M'$ be
the~$k$-fold connected sum~$S^1\times S^{n-1} \# \cdots \#S^1\times
S^{n-1}$.  Then~$M'$ is a closed~$n$-manifold of the category~$2$ with
$\pi_1(M')=F_s$.  For every relator~$w$ we fix a nicely imbedded
circle~$S^1_w\subset M'$ such that~$S_w^{-1}\cap S_v^{-1}=\emptyset$
for~$w\ne v$.  Then we perform the surgery on these circles to obtain
a manifold~$M$. Clearly,~$\pi_1(M)=\pi$. We show that~$\cat(M)\le 3$,
and so~$\cat M =3$ by \theoref{th:main}.

As usual, the surgery process yields an~$(n+1)$-manifold~$X$ with
$\partial X=M\sqcup M'$.  Here~$X$ is the space obtained from
$M'\times I$ by attaching handles~$D^2\times D^{n-1}$ of index 2 to
$M'\times 1$ along the above circles. We note that~$\cat(X)\le 3$.

On the other hand, by duality,~$X$ can be obtained from~$M \times I$
by attaching handles of index~$n-1$ to the boundary component of~$M
\times I$.  In particular, the inclusion~$f: M \to X$ induces an
isomorphism of the homotopy groups of dimension~$\le n-3$ and an
epimorphism in dimension~$n-2$.  Hence~$\Omega f:\Omega M\to\Omega X$
induces isomorphisms in dimensions~$\le n-4$ and an epimorphism in
dimension~$n-3$.  Thus,~$\pi_i(\Omega f)=0$ for~$i\le n-4$.

In order to prove that~$\cat M\le 3$ it suffices to show that the
Ganea-\v Svarc fibration~$p_3:G_3(M)\to M$ has a section.  Consider
the commutative diagram
\[
\CD
G_3M @>q> >Z @>f'>> G_3(X)\\
@Vp_M^3VV @Vp'VV @ VVp_3^XV\\
M @= M @>f>> X\\
\endCD
\]
where the right-hand square is the pull-back diagram
and~$f'q=G_3(f)$. Note that~$q$ is uniquely determined.
Since~$\cat(X)\le 3$, by \theoref{t:ganea} there is a section~$s:X\to
G_3(X)$.  It defines a section~$s':M\to Z$ of~$p'$. It suffices to
show that the map~$s':M\to Z$ admits a homotopy lifting~$h: M \to
G_3M$ with respect to~$q$, i.e. the map~$h$ with ~$qh\cong
s'$. Indeed, we have
\[
p_M^3h=p'qh\cong p's'=1_M
\]
and so~$h$ is a homotopy section of~$p_3^M$. Since the latter is a
Serre fibration, the homotopy lifting property yields an actual
section.

Let~$F_1$ and~$F_2$ be the fibers of fibrations~$p_3^M$ and~$p'$,
respectively. Consider the commutative diagram generated by the homotopy exact sequences
of the Serre fibrations~$p_3^M$ and~$p'$:
\[
\CD
\pi_i(F_1) @>>> \pi_i(G_3(M)) @>(p_3^M)_*>> \pi_i(M) @>>>
\pi_{i-1}(F_1) @>>>\cdots\\
 @VV\phi_*V @ VVq_*V @VV=V @VV\phi_*V @.\\
\pi_i(F_2) @>>> \pi_i(Z) @>(p')_*>> \pi_i(M) @>>>\pi_{i-1}(F_2)@>>>
\cdots .\\
\endCD
\]
Note that we have
\[
\phi=\Omega (f)\ast \Omega(f)\ast\Omega(f)\ast \Omega(f).
\]

By \propref{p:join} and since~$\pi_i(\Omega f)=0$ for~$i\le n-5$, we
conclude that~$\pi_i(\phi)=0$ for~$i\le n-4+3=n-1$.  Hence~$\phi$
induces an isomorphism of the homotopy groups of dimensions~$\le n-1$
and an epimorphism in dimension~$n$.  By the Five Lemma we obtain
that~$q_*$ is an isomorphism in dimensions~$\le n-1$ and an
epimorphism in dimension~$n$. Hence the homotopy fiber of~$q$
is~$(n-1)$-connected. Since~$\dim M=n$, the map~$s'$ admits a homotopy
lifting~$h:M\to G_3(M)$.
\end{proof}

\begin{cor}
\label{c:real}
Given a finitely presented group~$\pi$ and non-negative integer
numbers~$k, l$ there exists a closed manifold~$M$ such
that~$\pi_1(M)=\pi$, while~$\cat M=3+k$ and~$\dim M=5+2k+l$.
\end{cor}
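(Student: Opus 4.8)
The plan is to build the manifold $M$ out of the basic pieces already constructed, namely the $n$-manifolds of LS-category $3$ with prescribed fundamental group supplied by \theoref{t:high}, and then to fatten them by taking products with spheres and, when necessary, with tori, adjusting the dimension while controlling the category through the cuplength lower bound and \propref{prop:swgtprops}. First I would set $N := 5 + 2k + l$ and dispose of the case $k = 0$: here \theoref{t:high} directly yields a closed $n$-manifold $M_0$ of LS-category $3$ with $\pi_1(M_0) = \pi$ for every $n \ge 5$, so taking $M = M_0$ with $n = 5 + l$ does the job. The content of the corollary is therefore the behavior of the extra summand $k$, which must simultaneously raise $\cat$ from $3$ to $3+k$ and raise the dimension by $2k + (l\text{-shift})$; the natural device is to multiply by $k$ copies of $S^1 \times S^1$ (real dimension $2$ each, and each a $K(\Z^2,1)$ contributing $+1$ to both cuplength and category weight), together with possibly one sphere factor to absorb the remaining dimension.

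Concretely, I would take $M := M' \times T$, where $M'$ is a closed $5$-dimensional manifold of LS-category $3$ with $\pi_1(M') = \pi$ (from \theoref{t:high} with $n = 5$), and $T$ is a product of $k$ copies of the $2$-torus together with one extra sphere $S^{2k + l - 2k} = S^{l}$ when $l > 0$ (and nothing extra when $l = 0$, in which case we instead start from a $(5 + l)$-dimensional $M'$). Then $\dim M = 5 + 2k + l$ by construction, and $\pi_1(M) = \pi \times \Z^{2k}$... — wait, this alters the fundamental group, so the torus factors are not admissible. The correct device is instead to perform the construction \emph{internally}: use the surgery-and-join machinery of \theoref{t:high}, or more simply take $M := M' \# \big(\#^k (S^2 \times S^{N-2})\big)$-type modifications that do not change $\pi_1$; but connected sums do not raise category. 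The clean route, which I would adopt, is to take $M := M_0 \times S^{a_1} \times \cdots \times S^{a_k}$ with each $a_i \ge 1$: each sphere factor is simply connected so $\pi_1$ is preserved, each contributes $+1$ to the cuplength hence (being a suspension, so every class has category weight $\ge 1$... rather, by \propref{prop:swgtprops}(1) and the product cup structure) each contributes exactly $+1$ to $\cat$, giving $\cat M = 3 + k$ by the matching lower bound from cuplength and the standard upper bound $\cat(X \times Y) \le \cat X + \cat Y$; choosing $\sum a_i = 2k + l$ with each $a_i \ge 1$ (possible since $2k + l \ge k$ for $k \ge 1$ and $l \ge 0$) arranges $\dim M = 5 + l + \dim M_0$-adjustment — pick $M_0$ of dimension $5$ when the $a_i$ can sum to $2k+l$, i.e.\ always for $k\ge 1$.

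The main obstacle is the sharp \emph{lower} bound $\cat M \ge 3 + k$: the upper bound $\cat M \le 3 + k$ is immediate from subadditivity of LS-category under products together with $\cat S^{a_i} = 1$ and $\cat M_0 = 3$. For the lower bound I would combine (i) the category-weight argument of \theoref{th:main}: $M_0$ carries a class $f^*u \cup v \ne 0$ of category weight $\ge 3$ pulled back from $K(\pi,1)$ and Poincar\'e-dualized; and (ii) for each sphere factor $S^{a_i}$, the top class $s_i \in H^{a_i}(S^{a_i})$ has $\wgt(s_i) \ge 1$ by \propref{prop:swgtprops}(1). By the K\"unneth theorem the external cup product $\operatorname{pr}_0^*(f^*u \cup v) \cup \operatorname{pr}_1^* s_1 \cup \cdots \cup \operatorname{pr}_k^* s_k$ is a nonzero class in $H^*(M; \text{suitable local system})$, and by \propref{prop:swgtprops}(2),(3) its category weight is at least $3 + k$; then \propref{prop:swgtprops}(1) forces $\cat M \ge 3 + k$. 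The one point requiring care is that \theoref{th:main}'s class lives in \emph{twisted} cohomology, so I must check that the K\"unneth formula and the multiplicativity of category weight in \propref{prop:swgtprops}(3) are available in the mixed twisted/untwisted setting — but this is exactly the generality in which Section~\ref{local} and \propref{prop:swgtprops} were stated, so no new input is needed. This completes the proof modulo these routine verifications.
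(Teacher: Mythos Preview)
Your final approach---take a category-$3$ manifold $N$ with $\pi_1(N)=\pi$ from \theoref{t:high}, multiply by simply connected spheres, and certify the lower bound $\cat M\ge 3+k$ by cupping a weight-$3$ class on $N$ with the sphere fundamental classes via \propref{prop:swgtprops}---is exactly the paper's. The paper chooses $N$ of dimension $5+l$ and sets $M=N\times (S^2)^k$, citing \cite{R2} for the fact that a detecting element survives and gains one unit of category weight upon multiplication by each sphere; your K\"unneth argument is that same fact spelled out.

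Two slips need fixing. First, you write ``each $a_i\ge 1$'' immediately after asserting that every sphere factor is simply connected; an $S^1$ factor would contaminate $\pi_1$ just as your rejected torus did, so you need $a_i\ge 2$ (still possible, since $2k+l\ge 2k$). Second, and more substantively, your lower-bound step invokes the class $f^*u\cup v$ from the proof of \theoref{th:main}, but that construction only works when $\pi$ is \emph{not} free: it relies on $H^2(K(\pi,1);\A)\ne 0$ via \theoref{th:free} and \lemref{l:cd}. For free $\pi$ of rank $s$, the manifold $N$ produced by \theoref{t:high} is $(\#_s\, S^1\times S^2)\times S^{n-3}$, which has cuplength $3$; the triple cup product then serves as the weight-$3$ detecting element. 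The paper makes exactly this case split before multiplying by spheres.
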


\begin{proof}
By \theoref{t:high}, there exists a manifold~$N$ such that
$\pi_1(M)=\pi$,~$\cat M=3$ and~$\dim M=5+l$. Moreover, this manifold
$N$ possesses a detecting element, i.e. a cohomology class whose
category weight is equal to~$\cat N=3$. For~$\pi$ free this follows
since the cuplength of~$N$ is equal to 3, for other groups we have the
detecting element~$f^*u\cup v$ constructed in the proof of
\theoref{th:main}. If a space~$X$ possesses a detecting element then,
for every~$k>0$, we have~$\cat (X\times S^k) =\cat X+1$ and~$X \times
S$ possesses a detecting element, \cite{R2}. Now, the manifold
$M:=N\times (S^2)^k$ is the desired manifold.
\end{proof}

Generally, we have a question about relations between the category,
the dimension, and the fundamental group of a closed manifold.The
following proposition shows that the situation quite intricate.

\begin{prop}
\label{p:lense}
Let $p$ be an odd prime. Then there exists a closed $(2n+1)$-manifold
with $\cat M=\dim M$ and $\pi_1(M)=\Zp$, but there are no closed
$2n$-manifolds with $\cat M=\dim M$ and $\pi_1(M)=\Zp$.
\end{prop}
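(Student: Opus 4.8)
The plan is to treat the two assertions separately, the positive one by an explicit construction and the negative one by a cohomological-dimension obstruction. For the first half, I would take~$M$ to be a lens space~$L^{2n+1}(p;q_1,\dots,q_n)$ with fundamental group~$\Zp$. The standard~$CW$-structure on the infinite lens space~$K(\Zp,1)=S^\infty/\Zp$ has one cell in each dimension, and its mod~$p$ cohomology ring is~$H^*(\Zp;\Z/p)=\Z/p[y]\otimes\Lambda(x)$ with~$\deg x=1$, $\deg y=2$. Restricting to the finite skeleton realized by~$L^{2n+1}$, the class~$x\cup y^n\in H^{2n+1}(L^{2n+1};\Z/p)$ is nonzero by Poincar\'e duality. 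Hence~$L^{2n+1}$ has cuplength~$n+1$ with~$\Z/p$ coefficients\dots but that only gives~$\cat\ge n+1$, not~$\dim=2n+1$. The right tool is category weight: by \propref{prop:swgtprops}~(4) applied to the skeletal map~$L^{2n+1}\hookrightarrow K(\Zp,1)$, the pulled-back one-dimensional class has~$\wgt\ge1$ and the pulled-back two-dimensional class has~$\wgt\ge2$ (these are restrictions of classes living on the~$K(\pi,1)$), so by the product formula \propref{prop:swgtprops}~(3) the class~$x\cup y^n$ has~$\wgt\ge 1+2n=2n+1$. Since~$\wgt(u)\le\cat M$ by \propref{prop:swgtprops}~(1) and~$\cat M\le\dim M=2n+1$ always, we get~$\cat L^{2n+1}=2n+1=\dim L^{2n+1}$.

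For the second, negative half, suppose~$M$ is a closed~$2n$-manifold with~$\pi_1(M)=\Zp$ and~$\cat M=\dim M=2n$. As noted in the excerpt after the fourth Question, if~$\cat M=\dim M=n$ then~$\cd_\Z(\pi_1 M)\ge n$ (cf.~\cite[Proposition 2.51]{CLOT}); equivalently, the Berstein--\v Svarc class~$u\in H^1(\pi;I(\pi))$ satisfies~$u^{\otimes 2n}\ne 0$, which forces~$H^{2n}(\Zp;A)\ne 0$ for some~$\Z[\Zp]$-module~$A$, i.e.~$\cd_\Z\Zp\ge 2n$. But~$\cd_\Z\Zp=\infty$, so this particular argument is vacuous and I need a parity refinement. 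The point is that an~$n$-essential closed~$n$-manifold pulls back a top class from~$K(\pi,1)$, so in fact one needs~$H^{2n}(\Zp;A)$ to contain a class that is a cup product of classes pulled back from~$M$; more usefully, since~$M$ is a closed orientable (or~$\Z/p$-orientable) manifold with~$\pi_1=\Zp$, the classifying map~$f\colon M\to K(\Zp,1)$ sends the fundamental class of~$M$ to an element of~$H_{2n}(\Zp;\Z/p)\cong\Z/p$, and the manifold is~$2n$-essential precisely when~$f_*[M]\ne 0$. So the obstruction I would invoke is: a closed~$\Z/p$-orientable~$m$-manifold with~$\pi_1=\Zp$ is~$m$-essential only if~$m$ is odd, because the~$\Z/p$-Poincar\'e-duality algebra of such a manifold, having~$\pi_1=\Zp$, must carry the top class~$x\cup y^{(m-1)/2}$ which requires~$m$ odd — for~$m=2n$ even, any top class would have to be~$y^n$ alone, but~$y^n$ cannot be dual to the fundamental class of an orientable manifold since the Bockstein/degree argument shows~$H^*(\Zp;\Z/p)$ has no Poincar\'e-duality quotient in even top degree compatible with orientability (the generator~$x$ in degree~$1$ is forced to survive by~$\Z/p$-Poincar\'e duality, contradiction with degree parity). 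Concretely: if~$M$ were~$2n$-essential then~$\cat M=2n=\dim M$ by \theoref{th:k-essent} and the Remark following it; conversely if~$\cat M=\dim M$ then~$M$ is~$\dim M$-essential by the same circle of ideas; so the negative statement reduces to ``no closed~$2n$-manifold with~$\pi_1=\Zp$ is~$2n$-essential,'' which follows from the cohomology ring of~$\Zp$ having its Poincar\'e-duality-type subquotients only in odd degrees.

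The main obstacle is making the parity argument in the second half precise: one must show that~$\Z/p$-essentialness of a closed~$2n$-manifold~$M$ with~$\pi_1=\Zp$ is genuinely obstructed, not merely that the naive cohomological-dimension count is inconclusive. The cleanest route is probably: essentialness of~$M$ means~$f^*\colon H^{2n}(\Zp;\Z/p)\to H^{2n}(M;\Z/p)$ is nonzero (apply~$f^*$ to the class dual to~$f_*[M]$), hence~$f^*(y^n)\ne0$ in~$H^{2n}(M;\Z/p)$; but then by~$\Z/p$-Poincar\'e duality on~$M$ there is~$w\in H^1(M;\Z/p)$ with~$f^*(y^n)\cup w\ne0$, and since~$H^1(M;\Z/p)=H^1(\Zp;\Z/p)=\Z/p\langle x\rangle$ is generated by~$f^*x$, we get~$f^*(x\cup y^n)\ne 0$ in~$H^{2n+1}(M;\Z/p)=0$ — a contradiction because~$\dim M=2n$. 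That pinpoints exactly where evenness is used: the odd-degree class~$x\cup y^n$ lives in degree~$2n+1$, which vanishes on a~$2n$-manifold, whereas on a~$(2n+1)$-manifold it is the top class, giving the positive result. So both halves hinge on the single fact~$x\cup y^n\ne0$ in~$H^{2n+1}(\Zp;\Z/p)$, used once to produce a detecting element and once to derive a contradiction. I would also need to confirm the general principle that~$\cat M=\dim M$ is equivalent to~$\dim M$-essentialness for closed manifolds with nontrivial~$\pi_1$; this is standard and can be cited to~\cite{KR1} or \cite[Theorem 12.5.2]{SGT} as in the Remark above, reducing everything to the cohomology computation.
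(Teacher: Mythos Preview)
Your argument for the odd-dimensional case is correct and in fact more detailed than the paper's, which simply names the quotient $S^{2n+1}/\Zp$ and takes the equality $\cat=\dim$ as known; your category-weight computation via $\wgt(f^*x\cup (f^*y)^n)\ge 1+2n$ is a clean way to certify it.

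The even-dimensional argument, however, contains a genuine error. In your final paragraph you assert that if $f^*(y^n)\ne 0$ in $H^{2n}(M;\Z/p)$, then $\Z/p$-Poincar\'e duality produces $w\in H^1(M;\Z/p)$ with $f^*(y^n)\cup w\ne 0$. But $f^*(y^n)$ already sits in the top degree~$2n$, so its Poincar\'e-duality partner lies in $H^0(M;\Z/p)$, not in $H^1$; cupping a top-degree class with anything of positive degree lands in $H^{2n+1}(M)=0$ tautologically. Thus the ``contradiction'' $f^*(x\cup y^n)\ne 0$ in $H^{2n+1}(M)=0$ is nothing but the dimension bound and has no logical connection to the hypothesis $f^*(y^n)\ne 0$. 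The step simply fails, and with it the proof.

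The paper's route is shorter and stays with integral coefficients. Since $p$ is odd, $M$ is orientable; since $H_{2n}(K(\Zp,1);\Z)=0$ (integral homology of $\Zp$ vanishes in positive even degrees), one has $f_*[M]=0$. Obstruction theory together with Poincar\'e duality (the paper refers to \cite[Section~8]{Bab1}) then deforms $f$ into the $(2n-1)$-skeleton of $K(\Zp,1)$, so $M$ is inessential and $\cat M<2n$ by the equivalence you cite. Your parity intuition is correct in spirit---the torsion class $Y^n\in H^{2n}(\Zp;\Z)\cong\Z/p$ must pull back to zero in $H^{2n}(M;\Z)\cong\Z$, and hence $f^*(y^n)=0$ in $\Z/p$-cohomology as well---but the way to turn this into a proof is through integral coefficients and compressibility, not through a $\Z/p$ cup-product contradiction that tries to live in degree $2n+1$ on a $2n$-manifold.
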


\begin{proof}
An example of $(2n+1)$-manifold is the quotient space $S^{2n+1}/\Zp$
with respect to a free $\Zp$-action on $S^{2n+1}$. Now, given a
$2n$-manifold with $\pi_1(M)=\Zp$, consider a map $f:M \to K(\Zp,1)$
that induces an isomorphism of fundamental groups. Since
$H_{2n}(K(\Zp,1))=0$, it follows from the obstruction theory and
Poincar\'e duality that $f$ can be deformed into the $(2n-1)$-skeleton
of $K(\Zp,1)$, cf.~\cite[Section 8]{Bab1}. Hence, $M$ is inessential,
and thus $\cat M <2n$ \cite{KR1}.
\end{proof}

\section{Upper bound for systolic category}
\label{upbound}

In this section, we recall the definition of systolic
category~$\syscat$, and prove an upper bound for~$\syscat$ of a smooth
manifold with free fundamental group.  Combined with the result of the
previous section, we thus obtain a proof of \theoref{12}.

The main idea behind the definition of systolic category is to bound
the total (top-dimensional) volume from below by a product of
lower-dimensional systolic information, in the following precise
sense, cf.~\eqref{dd}.

\begin{definition}
\label{21b}
\label{sys}
Let~$X$ be a Riemannian manifold with the Riemannian metric~$\gmetric$.
Given~$k\in \N$,~$k>1$, we set
\[
\ \sys_{k}(X, \gmetric)= \inf\{\sysh_k(X, \gmetric;\Z[\regular]),
\sysh_k(X, \gmetric;\Z_2[\regular]),\stsys_k(X, \gmetric) \},
\]
where the infimum is over all groups~$B$ of regular covering spaces
of~$X$.  Note that a systole of~$X$ with coefficients in the group
ring of~$B$ is by definition the systole of the corresponding covering
space~$\ov X \to X$ with deck group~$B$.  Here~$\sysh_k$ is the
homology~$k$-systole, while~$\Z[\regular]$ is the~$\Z$-group ring
of~$\regular$ and similarly for~$\Z_2$.  The invariant~$\stsys_k$ is
the stable~$k$-systole.  See \cite{KR1} and \cite[Chapter~12]{SGT} for
more detailed definitions.  Furthermore, we define
$$
\sys_{1}(X, \gmetric)=\min \{\pisys_1(X, \gmetric),\stsys_1(X,
\gmetric)\}.
$$
\end{definition}

Note that the systolic invariants thus defined are positive (or
infinite), see \cite{KR2}.

Let~$X$ be an~$n$-dimensional polyhedron, and let~$d\geq 1$ be an
integer.  Consider a partition
\begin{equation}
\label{2.1}
n= k_1 + \ldots + k_d,
\end{equation}
where~$k_i\geq 1$ for all~$i=1,\ldots, d$.  We will consider
scale-invariant inequalities ``of length~$d$'' of the following type:
\begin{equation}
\label{dd}
\sys_{k_1}(\gmetric) \sys_{k_2}(\gmetric) \ldots \sys_{k_d}(\gmetric)
\leq C(X) \vol_n(\gmetric),
\end{equation}
satisfied by all metrics~$\gmetric$ on~$X$, where the constant~$C(X)$
is expected to depend only on the topological type of~$X$, but not on
the metric~$\gmetric$.  Here the quantity~$\sys_k$ denotes the infimum
of all systolic invariants in dimension~$k$, as defined above.

\begin{definition}
\label{syscat}
Systolic category of~$X$, denoted~$\syscat(X)$, is the largest
integer~$d$ such that there exists a partition~\eqref{2.1} with
\[
\prod\limits^{d}_{i=1} \sys_{k_i}(X,\gmetric) \leq C(X) \vol_n(X,\gmetric)
\]
for all metrics~$\gmetric$ on~$X$.  If no such partition and
inequality exist, we define systolic category to be zero.
\end{definition}

In particular, we have~$\syscat X \le \dim X$.

\begin{rem}
Clearly, systolic category equals~$1$ if and only if the polyhedron
possesses an~$n$-dimensional homology class, but the volume cannot be
bounded from below by products of systoles of positive codimension.
Systolic category vanishes if~$X$ is contractible.
\end{rem}

\begin{thm}
\label{64}
Let~$M$ be an~$n$-manifold,~$n\geq 4$, with~$H_2(M)$ torsion-free.
Suppose the fundamental group of~$M$ is free.  Then its systolic
category is at most~$n-2$.
\end{thm}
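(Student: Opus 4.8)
The plan is to bound the systolic category from above by exhibiting an explicit obstruction: showing that no scale-invariant inequality of length $n-1$ can hold, because the only candidate partitions involve a $1$-dimensional factor whose systole cannot help. The key structural input is the hypothesis that $\pi_1(M)$ is free and $H_2(M)$ is torsion-free, which forces $M$ to be ``inessential'' in a strong sense below dimension $n$; more precisely, the classifying map $M\to K(\pi_1(M),1)\simeq\bigvee S^1$ and the map to $K(\Z,2)=\C P^\infty$ detecting $H^2$ together capture all the ``systolically relevant'' cohomology, and both target spaces are $1$- and $2$-dimensional respectively.

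First I would recall, as in the strategy for the LS-category statement, that a free group $\pi$ has $\cd_\Z\pi\le 1$, so $K(\pi,1)$ has the homotopy type of a wedge of circles, a $1$-complex. Then I would use the hypothesis that $H_2(M;\Z)$ is torsion-free to identify $H^2(M;\Z)$ with a free abelian group pulled back from $H^2(\C P^\infty;\Z)$ via a map $g\colon M\to\C P^\infty$. The point is that a systolic inequality of length $d$ with a partition $n=k_1+\dots+k_d$ needs each $\sys_{k_i}$ to be ``detected'' by genuine topology; a $k_i=1$ term is controlled by $\pi_1$, which is free, hence by a map to a $1$-complex, and a $k_i=2$ term is controlled by $H_2$, hence by $\C P^\infty$. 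This is exactly the mechanism of \theoref{64} as flagged in the introduction (``immediate from the structure of the spaces $K(\Z,1)$ and $K(\Z,2)$'').

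The core argument I would then run is a dimension/product bound: combining the map $f\colon M\to\bigvee S^1$ (degree-$1$ on $\pi_1$) with $g\colon M\to(\C P^\infty)^r$ where $r=\operatorname{rk}H_2(M)$, one gets a map $\Phi=(f,g)\colon M\to(\bigvee S^1)\times(\C P^\infty)^r$. Each systolic invariant $\sys_{k}(M,\gmetric)$ with $k\ge 1$ is, up to the relevant comparison, controlled by the pullback under $\Phi$ of cohomology in degree $k$; but the target has the homotopy type of a space whose cohomology is generated in degrees $1$ and $2$, and crucially any nontrivial product of systoles corresponding to a valid partition $n=k_1+\dots+k_d$ would force $d\le$ (number of ``independent directions'' available), which the structure of $(\bigvee S^1)\times(\C P^\infty)^r$ caps. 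Concretely: a length-$(n-1)$ partition of $n$ must have all parts equal to $1$ except one part equal to $2$; so I would show that the ``extra'' $1$-systole term contributes nothing, because in a wedge of circles the product of a $1$-systole with anything already accounts for that circle, i.e. one cannot split off an independent $1$-dimensional factor beyond what the free group provides, whereas a genuine length-$(n-1)$ inequality would require $n-2$ of the $S^1$ factors to be systolically independent — impossible once $\pi_1$ is free of any given rank only if... Here I would instead argue directly via \emph{cup-length}: the cohomology ring of $(\bigvee S^1)\times(\C P^\infty)^r$ pulled back to $M$ has cup-length at most $1+r\cdot\infty$ in naive terms, but what matters is the \emph{length of a partition realizable by systoles}, and the standard comparison (from \cite{KR1}, \cite[Ch.~12]{SGT}) between $\syscat$ and such cohomological data gives $\syscat(M)\le n-2$ directly once we know $M$ admits a map to a space with the stated low-dimensional cohomology generation and $H_2(M)$ torsion-free (so no $2$-torsion systolic phenomena intervene).

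The main obstacle I expect is the bookkeeping between the three flavors of systole appearing in \defref{21b} — the $\Z[\regular]$- and $\Z_2[\regular]$-homology systoles over all regular covers, and the stable systole — and checking that each is genuinely governed by the map $\Phi$ to the $1$-and-$2$-dimensional model. The $\Z_2$-coefficient and torsion issues are precisely why the hypothesis ``$H_2(M)$ torsion-free'' is imposed: it ensures the $\Z_2$-homology $2$-systole carries no information beyond the rational one, so that the relevant partitions collapse to the single shape $(2,1,1,\dots,1)$ and that shape is shown non-realizable for length $n-1$. I would isolate this as the key lemma: under the stated hypotheses, for any partition $n=k_1+\dots+k_d$ with $d=n-1$, the product $\prod_i\sys_{k_i}$ admits metrics making it arbitrarily large relative to $\vol_n$, because the unique ``large'' factor $\sys_2$ is pulled back from $\C P^\infty$ via $g$ and the remaining $1$-systoles are pulled back from the wedge of circles, and these cannot simultaneously stay small on a single sequence of metrics collapsing the volume — a standard ``pull apart the factors'' construction. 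Everything else is then routine invocation of the comparison results cited in Section~\ref{upbound}.
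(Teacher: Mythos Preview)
Your setup matches the paper's exactly: the map $\Phi\colon M \to (\bigvee S^1) \times (\C P^\infty)^{b_2}$ inducing isomorphisms on $\pi_1$ and on $H_2$ is precisely the map the paper uses. But the argument has a real gap at the decisive step. Upper bounds for $\syscat$ are proved by \emph{constructing metrics} that violate each candidate inequality; there is no cohomological or cup-length comparison that bounds $\syscat$ from above (such comparisons yield \emph{lower} bounds for $\cat$, not upper bounds for $\syscat$), so your cup-length digression is a dead end --- the sentence that trails off (``impossible once $\pi_1$ is free of any given rank only if\ldots'') is a symptom of this. What remains is your ``key lemma,'' which you dismiss as a ``standard `pull apart the factors' construction.'' In fact that construction is the entire content of the proof and is not routine. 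You also omit the length-$n$ partition (all $k_i=1$), which must be ruled out as well.

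The paper carries this out concretely. After arranging that $\Phi(M)$ lies in the $n$-skeleton of the target with its product cell structure, it splits by parity of~$n$. For $n$ even, every $n$-cell of $(\bigvee S^1)\times(\C P^\infty)^{b_2}$ lies entirely in the second factor (any cell touching a circle has odd dimension), and the pullback-of-metrics technique of Babenko \cite{Bab1, Ba06} and \cite{e7} then produces metrics on $M$ with fixed $\vol_n$ and fixed $\sys_2$ but $\sys_1\to\infty$; this kills every partition built from $1$'s and $2$'s containing at least one $1$, hence all partitions of length $\ge n-1$. For $n\ge 5$ odd, each top cell has closure $S^1\times\C P^{(n-1)/2}$; scaling the circle factor by $L\to\infty$ makes both $\sys_1$ and $\vol_n$ grow linearly while $\sys_2$ stays fixed, so any bound $(\sys_1)^a(\sys_2)^b\le C\,\vol_n$ forces $a\le 1$, and every surviving partition then has length at most $(n+1)/2\le n-2$. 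Without this parity split and the explicit metric families, the proof does not close.
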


\begin{proof}
By hypothesis, we have~$H_2(M)=\Z^{b_2}$.  Consider a map
\[
M\to K(\pi,1)\times K(\Z^{b_2},2),
\]
inducing an isomorphism of fundamental groups, as well as isomorphism
of~$2$-dimensional homology.  Here the first factor is a wedge of
circles, while the second is a product of~$b_2(M)$ copies of~$\C
{\mathbb P}^\infty$.  We work with the product cell structure.  We may
assume that the image of~$M$ lies in the~$n$-skeleton.  We consider
separately two cases according to the parity of the dimension.

If~$n$ is even, then all cells involving circles from the first factor
have positive codimension.  Since the map in homology is injective, we
can apply pullback arguments for metrics as in \cite{e7}.  Therefore
we can obtain metrics with fixed volume and fixed~$\sys_2$, but with
arbitrarily large~$\sys_1$.  In more detail, the pullback arguments
for metrics originate in I.~Babenko's 1992 paper translated in
\cite{Bab1}, where he treats the case of the~$1$-systoles.  In
\cite{Ba06}, he describes the argument for the stable~$1$-systole.  In
\cite{e7}, we present an argument that treats the case of the
stable~$k$-systole, involving a suitable application of the coarea
formula and an isoperimetric inequality.

This precludes the possibility of a lower bound for the total volume
corresponding to a partition of type~\eqref{2.1} satisfying
\[
\max_i k_i =2,
\]
i.e.~involving only the~$1$-systole and the~$2$-systole.  Thus any
lower bound must involve a~$k$-systole with~$k\geq 3$, and therefore
we obtain~$\syscat \leq n-2$, proving the theorem for even~$n$.

If~$n\geq 5$ is odd, each top dimensional cell is a copy of~$S^1\times
\C {\mathbb P}^{(n-1)/2}$.  By considering the product metric of a
circle of length~$L\to \infty$ with a fixed metric on~$\C {\mathbb
P}^{(n-1)/2}$, we see that both the volume and the~$1$-systole grow
linearly in~$L$.  Therefore a product of the form
\[
(\sys_1)^a (\sys_2)^b
\]
can only be a lower bound for the volume if the first exponent
satisfies~$a=1$.  Thus a factor of either~$(\sys_2)^2$ or~$\sys_k$
with~$k\geq 3$ must be involved in \eqref{dd}, and again we
obtain~$\syscat \leq n-2$.
\end{proof}

\begin{cor}
The systolic category of an orientable~$4$-manifold with free
fundamental group is at most~$2$.
\end{cor}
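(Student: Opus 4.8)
The plan is to reduce the statement directly to \theoref{64} applied with $n=4$. The only hypothesis of that theorem which is not already part of the present assumptions is the torsion-freeness of $H_2(M)$, so the first --- and essentially only --- step is to verify that this holds automatically for an orientable $4$-manifold with free fundamental group. Since $\pi:=\pi_1(M)$ is free, its abelianization $H_1(M;\Z)$ is a free abelian group, and hence $\operatorname{Ext}(H_1(M;\Z),\Z)=0$. The universal coefficient short exact sequence
\[
0\to \operatorname{Ext}(H_1(M;\Z),\Z)\to H^2(M;\Z)\to \Hom(H_2(M;\Z),\Z)\to 0
\]
then exhibits $H^2(M;\Z)$ as a subgroup of the free abelian group $\Hom(H_2(M;\Z),\Z)$, so $H^2(M;\Z)$ is torsion-free. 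As $M$ is orientable, untwisted Poincar\'e duality gives $H_2(M;\Z)\cong H^{2}(M;\Z)$, which is therefore torsion-free as well.

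With this observation in place, \theoref{64} with $n=4$ yields at once $\syscat(M)\le n-2=2$, as claimed.

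There is no genuine obstacle here: the corollary is a direct specialization of \theoref{64} once the torsion-freeness of $H_2(M)$ has been noted. The single point deserving a remark is the role of orientability, which is exactly what permits the use of untwisted Poincar\'e duality $H_2(M;\Z)\cong H^2(M;\Z)$; in the non-orientable case one would instead have to argue with $\Z_2$-coefficients or with the orientation sheaf $\O$ of Example~\ref{ex:bundle}, and the cleanest formulation is the orientable one stated here. If one wished to bypass \theoref{64} altogether, the content to be reproved would be its even-dimensional case for $n=4$: construct a map from $M$ to $\big(\bigvee S^1\big)\times \C{\mathbb P}^\infty\times\cdots\times\C{\mathbb P}^\infty$ (with $b_2(M)$ factors $\C{\mathbb P}^\infty$) inducing isomorphisms on $\pi_1$ and on $H_2$, deform $M$ into the $4$-skeleton of the product cell structure, and apply Babenko-type pullback constructions to produce metrics of fixed volume and fixed $\sys_2$ but arbitrarily large $\sys_1$, thereby ruling out any scale-invariant systolic inequality of length exceeding $2$.
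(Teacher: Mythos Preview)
Your proof is correct and follows exactly the paper's own argument: use the universal coefficient theorem (with $H_1$ free abelian since $\pi_1$ is free) to get $H^2(M;\Z)$ torsion-free, invoke orientable Poincar\'e duality to conclude $H_2(M;\Z)$ is torsion-free, and then apply \theoref{64} with $n=4$. The only difference is that you spell out the Ext computation and add some optional commentary, whereas the paper compresses the whole thing into two sentences.
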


\begin{proof}
If the fundamental group is free, then by the universal coefficient
theorem,~$2$-dimensional cohomology is torsion-free.  By~$\Z$-Poincar\'e duality, the same is true of homology, and we apply the previous
theorem.
\end{proof}

\begin{proof}[Proof of Theorem~$\ref{12}$]
Let~$M$ be an orientable~$4$-manifold with free fundamental group.
The possible inequalities of type \eqref{dd} in dimension~$4$
correspond to one of the three partitions~$4=1+3$,~$4=2+2$,
and~$4=1+1+2$.  The latter is ruled out by the previous theorem.  In
either of the first two cases, we have~$\syscat=2$.  Since
a~$4$-manifold with~$\syscat=4$ must be essential with suitable
coefficients~\cite{Bab1, Gr1, KR1}, it follows that~$\syscat \leq
\cat$ for all orientable~$4$-manifolds.
\end{proof}

\section{Self-linking of fibers and a lower bound for~$\syscat$}
\label{six}

There are three main constructions for obtaining systolic lower bounds
for the total volume of a closed manifold~$M$.  All three originate is
Gromov's 1983 Filling paper~\cite{Gr1}, and can be summarized as
follows.

\begin{enumerate}
\item
Gromov's inequality for the homotopy~$1$-systole of an essential
manifold~$M$, see \cite{We, Gu, Bru2} and \cite[p.~97]{SGT}.
\item
Gromov's stable systolic inequality (treated in more detail in
\cite{BK1, BK2}) corresponding to a cup product decomposition of the
rational fundamental cohomology class of~$M$.
\item
A construction using the Abel--Jacobi map to the Jacobi torus of~$M$
(sometimes called the dual torus), also based on a theorem of Gromov
(elaborated in \cite{IK, BCIK2}).
\end{enumerate}

Let us describe the last construction in more detail.  Let~$M$ be a
connected~$n$-manifold.  Let~$b=b_1(M)$.  Let
\[
\T^b := H_1(M;\R)/H_1(M;\Z)_\R
\]
be its Jacobi torus (sometimes called the dual torus).  A natural
metric on the Jacobi torus of a Riemannian manifold is defined by the
stable norm, see \cite[p.~94]{SGT}.

The Abel--Jacobi map~$\AJ_M: M\to \T^b$ is discussed in \cite{Li, BK2},
cf.~\cite[p.~139]{SGT}.  A typical fiber~$F_M\subset M$ (i.e.~inverse
image of a generic point) of~$\AJ_M$ is a smooth
imbedded~$(n-b)$-submanifold (varying in type as a function of the
point of~$\T^b$).  Our starting point is the following observation of
Gromov's \cite[Theorem~7.5.B]{Gr1}, elaborated in \cite{IK}.

\begin{theorem}[M.~Gromov]
\label{61}
If the homology class~$\fmanifold\in H_{n-b}(M)$ of the lift of~$F_M$
to the maximal free abelian cover of~$M$ is nonzero, then the total
volume of~$M$ admits a lower bound in terms of the product of the
volume of the Jacobi torus and the infimum of areas of cycles
representing~$\fmanifold$.
\end{theorem}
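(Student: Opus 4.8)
The plan is to reduce the statement to the coarea formula combined with an isoperimetric-type estimate in the Jacobi torus, following the scheme of Gromov's Filling paper and its elaboration by Ivanov--Katz. First I would set up the Abel--Jacobi map $\AJ_M : M \to \T^b$ and lift it to the maximal free abelian cover $\overline{M}\to M$, obtaining an equivariant map $\overline{\AJ}: \overline{M}\to \R^b = H_1(M;\R)$ lifting $\AJ_M$. The target $\R^b$ is given the stable-norm metric, so that $\AJ_M$ is $1$-Lipschitz with respect to the stable norm on $\T^b$ (this is the defining property of the Abel--Jacobi map, cf.~\cite[p.~94, p.~139]{SGT}). The key homological input is the hypothesis that $\fmanifold \in H_{n-b}(\overline{M})$ is nonzero: this guarantees that for a generic point $y\in \R^b$, the fiber $\overline{F}=\overline{\AJ}^{-1}(y)$ carries a nonzero $(n-b)$-dimensional homology class, so any $(n-b)$-cycle homologous to it has area bounded below by a fixed positive constant $\sigma$ — the infimal area appearing in the statement.

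Next I would apply the coarea formula to the map $\overline{\AJ}$ (or equivalently, to $\AJ_M$ on $M$, using that the fibers and their areas are the same downstairs and on the cover since $\AJ_M$ factors through $\overline{M}$ up to the deck action). The coarea inequality gives
\[
\vol_n(M) \;\geq\; \int_{\T^b} \vol_{n-b}\big(\AJ_M^{-1}(y)\big)\, dy,
\]
where $dy$ is Lebesgue measure on the Jacobi torus in the stable-norm metric, and the Jacobian factor is controlled because $\AJ_M$ is $1$-Lipschitz. The crucial step is to bound the integrand from below: for \emph{every} regular value $y$, the fiber $\AJ_M^{-1}(y)$, when lifted to $\overline M$, represents (a multiple of) the nonzero class $\fmanifold$, hence has $(n-b)$-volume at least $\sigma>0$, where $\sigma$ is the infimum of areas of cycles representing $\fmanifold$. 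Integrating over $\T^b$ then yields
\[
\vol_n(M) \;\geq\; \vol_b(\T^b)\cdot \sigma,
\]
which is precisely the asserted lower bound in terms of the product of the volume of the Jacobi torus and the infimal area of cycles representing $\fmanifold$.

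The main obstacle, and the place requiring genuine care rather than bookkeeping, is the passage from "generic fiber represents $\fmanifold$" to "\emph{every} regular fiber has area $\geq\sigma$." Two points need attention here: first, one must verify that the lift of a regular fiber $\AJ_M^{-1}(y)$ to the maximal free abelian cover is a well-defined compact $(n-b)$-cycle whose homology class in $H_{n-b}(\overline M)$ is independent of $y$ (this is a connectedness/monodromy argument for the family of fibers over $\T^b$, using that $\T^b$ is connected so all regular values give homologous fibers). Second, one must ensure the lifted fiber really is a single compact cycle rather than an infinite union of translates — this is where the restriction $b=b_1(M)$ and the structure of the maximal free abelian cover enter, since the deck group $\Z^b$ acts on $\overline M$ and one works with a fundamental-domain representative. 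Once these homological facts are in place, the analytic core (coarea plus the $1$-Lipschitz property of $\AJ_M$) is standard. I would cite \cite[Theorem~7.5.B]{Gr1} and \cite{IK} for the detailed version of these fiber-homology arguments, and \cite{BCIK2} for refinements, rather than reproducing them in full.
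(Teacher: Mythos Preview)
The paper does not supply its own proof of Theorem~\ref{61}; it is stated as a result of Gromov \cite[Theorem~7.5.B]{Gr1} with the elaboration attributed to \cite{IK}, and is used as a black box in the subsequent arguments. Your sketch is a faithful reconstruction of the Ivanov--Katz argument the paper cites: the $1$-Lipschitz Abel--Jacobi map, the coarea inequality, and the observation that each lifted regular fiber in $\overline{M}$ is isometric to the corresponding fiber in $M$ and represents the fixed nonzero class $\fmanifold$, hence has $(n-b)$-volume at least $\sigma$. In that sense your proposal matches what the paper intends, even though the paper itself only points to the references rather than writing out the proof.

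One small clarification on the point you flagged as the main obstacle: the compactness of the lifted fiber is immediate once you note that $\overline{M}$ is the pullback of $\R^b\to\T^b$ along $\AJ_M$, so the preimage of $F_y\subset M$ in $\overline{M}$ is canonically $F_y\times\Z^b$, and a single lift $\overline{F}_{\tilde y}=\overline{\AJ}^{-1}(\tilde y)$ is isometric to $F_y$. This disposes of your second worry directly; the first (that all regular fibers give the same homology class in $\overline{M}$, up to the deck action) is the usual cobordism argument over a path in $\R^b$.
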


\begin{prop}\label{p:fm}
If a typical fiber of the Abel--Jacobi map represents a nontrivial
$(n-b)$-dimensional homology class in~$M$, then systolic category
satisfies~$\syscat(M)\geq b+1$.
\end{prop}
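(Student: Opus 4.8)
The plan is to manufacture, for every Riemannian metric $\gmetric$ on $M$, a scale-invariant systolic inequality ``of length $b+1$'' attached to the partition $n=\underbrace{1+\cdots+1}_{b}+(n-b)$, namely
\[
\sys_1(M,\gmetric)^{\,b}\;\sys_{n-b}(M,\gmetric)\ \le\ C(M)\,\vol_n(M,\gmetric),
\]
with $C(M)$ depending only on the topology of $M$. Granting this, Definition~\ref{syscat} yields $\syscat(M)\ge b+1$ at once. (Here $n-b\ge1$, since otherwise the asserted bound $b+1\le\dim M$ is vacuous; the case $n-b=1$ needs only a minor variant of the argument below, noted in passing.) The engine is Gromov's Theorem~\ref{61}, and the work consists in checking its hypothesis and rephrasing its conclusion through the invariants of Definition~\ref{sys}.

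First I would verify the hypothesis of Theorem~\ref{61}. Let $\bar M\to M$ be the maximal free abelian cover, with deck group $\Z^b$, and let $\fmanifold\in H_{n-b}(\bar M)$ be the class of the lift of a typical fiber $F_M$. The covering projection $\bar M\to M$ maps $\fmanifold$ to $[F_M]\in H_{n-b}(M)$, which is nonzero by hypothesis; hence $\fmanifold\ne0$, and Theorem~\ref{61} gives
\[
\vol_n(M,\gmetric)\ \ge\ c(M)\,\vol_b(\T^b)\cdot\inf\bigl\{\vol_{n-b}(z):z\ \text{an}\ (n-b)\text{-cycle in}\ \bar M,\ [z]=\fmanifold\bigr\},
\]
where $c(M)$ depends only on the topology of $M$. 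Since $\fmanifold\ne0$, every such $z$ is homologically nontrivial in $\bar M$, whence $\vol_{n-b}(z)\ge\sysh_{n-b}(M,\gmetric;\Z[\Z^b])\ge\sys_{n-b}(M,\gmetric)$, the last step because $\Z^b$ appears among the deck groups over which the infimum defining $\sys_{n-b}$ is taken. (When $n-b=1$ one argues instead with the homotopy $1$-systole of $\bar M$, using that $\bar M\to M$ is a local isometry and that a homologically nontrivial loop is noncontractible.) Thus the trailing factor above is $\ge\sys_{n-b}(M,\gmetric)$.

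It remains to bound $\vol_b(\T^b)$ from below by $\sys_1(M,\gmetric)^{\,b}$. With its stable-norm metric the Jacobi torus is a flat torus of the form $H_1(M;\R)/H_1(M;\Z)_\R$, and its shortest noncontractible loop has length equal to the shortest nonzero vector of the lattice $H_1(M;\Z)_\R$ measured in the stable norm, that is, to $\stsys_1(M,\gmetric)$; since $\sys_1=\min\{\pisys_1,\stsys_1\}$, this yields $\sys_1(M,\gmetric)\le\stsys_1(M,\gmetric)=\sys_1(\T^b)$. Minkowski's theorem on lattice points in symmetric convex bodies gives $\sys_1(\T^b)^{\,b}\le c_b\,\vol_b(\T^b)$ with $c_b$ depending only on $b$. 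Combining this with the previous paragraph,
\[
\sys_1(M,\gmetric)^{\,b}\,\sys_{n-b}(M,\gmetric)\ \le\ c_b\,\vol_b(\T^b)\,\sys_{n-b}(M,\gmetric)\ \le\ \frac{c_b}{c(M)}\,\vol_n(M,\gmetric)
\]
for all $\gmetric$; this is the desired inequality with $C(M)=c_b/c(M)$, so $\syscat(M)\ge b+1$.

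The real content is in the directions of the inequalities. One must make certain that the ``infimum of areas of cycles representing $\fmanifold$'' appearing in Theorem~\ref{61} is bounded \emph{below} by a systolic invariant of $M$, and that $\vol_b(\T^b)$, which sits on the volume side of Gromov's estimate, can \emph{itself} be bounded below by $\sys_1^{\,b}$ via Minkowski. The identification $\stsys_1(M)=\sys_1(\T^b)$, the inequality $\sys_1\le\stsys_1$, and the other standard facts about the Abel--Jacobi map and the stable norm used implicitly above are all covered by the references cited around Theorem~\ref{61}.
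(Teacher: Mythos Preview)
Your proposal is correct and follows essentially the same line as the paper's proof: both invoke Gromov's Theorem~\ref{61} for the partition $n=1+\cdots+1+(n-b)$, bound the Jacobi--torus volume below by $\stsys_1^{\,b}$ (you via Minkowski, the paper by the same fact stated without a name), and bound the fiber--class area term below by $\sys_{n-b}$. The only cosmetic difference is how the hypothesis of Theorem~\ref{61} is checked: you observe that the covering projection $\bar M\to M$ sends $\fmanifold$ to the nonzero class $[F_M]$, whereas the paper notes instead that nonvanishing of $[F_M]$ forces surjectivity of $\AJ_M$ and then appeals to the technique of Gromov's proof; both are equally valid entry points into the same argument.
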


\begin{proof}
If the fiber class is nonzero, then the Abel--Jacobi map is
necessarily surjective in the set-theoretic sense.  One then applies
the technique of Gromov's proof of Theorem~\ref{61}, cf.~\cite{IK},
combined with a lower bound for the volume of the Jacobi torus in
terms of the~$b$-th power of the stable~$1$-systole, to obtain a
systolic lower bound for the total volume corresponding to the
partition
\[
n=1+1+\cdots+1+(n-b),
\]
where the summand~$1$ occurs~$b$ times.
\end{proof}

Our goal is to describe a sufficient condition for applying Gromov's
theorem, so as to obtain such a lower bound in the case when the fiber
class in~$M$ vanishes.  From now on we assume that~$M$ is orientable,
has dimension~$n$, and~$b_1(M)=2$.  Let~$\{\alpha,\beta \} \subset
H^1(M)$ be a basis of~$H^1(M)$.  Let~$F_M$ be a typical fiber of the
Abel-Jacobi map.  It is easy to see that~$[F_M]$ is Poincar\'e dual to
the cup product~$\ga\cup \gb$.  Thus, if~$\ga\cup\gb\ne 0$
then~$\syscat M\ge 3$ by \propref{p:fm}.  If~$\ga\cup \gb=0$ then the
Massey product~$\la\ga,\ga,\gb\ra$ is defined and has zero
indeterminacy.

\begin{thm}\label{t:massey}
If~$\la\ga,\ga,\gb\ra\gb \ne 0$ then~$\syscat M \ge 3$.
\end{thm}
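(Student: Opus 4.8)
The plan is to reduce Theorem~\ref{t:massey} to \propref{p:fm} by producing, from the hypothesis~$\la\ga,\ga,\gb\ra\gb\ne 0$, a new closed manifold with first Betti number~$2$ whose Abel--Jacobi fiber class \emph{does} carry nontrivial homology, and then to show that a systolic lower bound for this auxiliary manifold pulls back to~$M$. More precisely, I would look for a self-map or a covering-type construction that converts the Massey-product information (a ``secondary'' obstruction) into a genuine cup-product (a ``primary'' one), since Gromov's Theorem~\ref{61} and \propref{p:fm} only see the fiber homology class, which is Poincar\'e dual to~$\ga\cup\gb$ and hence zero in our situation.

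**Key steps.** First I would fix the basis~$\{\ga,\gb\}\subset H^1(M)$ and recall that, because~$\ga\cup\gb=0$, the triple Massey product~$\la\ga,\ga,\gb\ra$ is defined with zero indeterminacy; choose cochain representatives~$a,b$ with~$a\cup a=\delta x$ and~$a\cup b=\delta y$ and set the representing cocycle~$m=x\cup b - a\cup y$ (signs as appropriate), so that~$[m]=\la\ga,\ga,\gb\ra\in H^1(M)$. Second, I would pass to the double cover~$\wt M\to M$ classified by, say, the mod-2 reduction of~$\ga$ (or more naturally work with the free abelian cover corresponding to the subgroup~$2\Z\times\Z\subset\Z^2=H_1(M)/\text{torsion}$): the standard transfer/Massey-product identity says that the pullback of a Massey product to such a cover can be expressed as an honest cup product of pulled-back classes, essentially because~$\ga$ becomes divisible there and~$x$ becomes a genuine class. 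Thus on the appropriate finite cover~$\ov M$, the class corresponding to the Massey datum becomes~$\ov\ga'\cup\ov\gb$ for a new degree-one class~$\ov\ga'$, and the hypothesis~$\la\ga,\ga,\gb\ra\gb\ne 0$ guarantees~$\ov\ga'\cup\ov\gb\cup\ov\gb\ne 0$, hence a fortiori~$\ov\ga'\cup\ov\gb\ne 0$. Third, on~$\ov M$ (which still has~$b_1=2$, or we restrict attention to the~$2$-dimensional subspace of~$H^1(\ov M)$ spanned by these classes and the corresponding quotient torus) the fiber of the associated Abel--Jacobi map is Poincar\'e dual to~$\ov\ga'\cup\ov\gb\ne 0$, so \propref{p:fm} gives~$\syscat(\ov M)\ge 3$, i.e.\ a length-$3$ systolic inequality~$\sys_1\sys_1\sys_{n-2}\le C\vol_n$ on~$\ov M$. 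Finally, I would push this inequality down to~$M$: a Riemannian metric on~$M$ lifts to~$\ov M$ with volume multiplied by the degree~$d$ of the cover and with systoles only increased (systoles go up under covers), so the inequality on~$\ov M$ forces the same type of inequality on~$M$ up to the constant factor~$d$, yielding~$\syscat M\ge 3$.

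**Main obstacle.** The delicate point is the second step: making precise the passage from the Massey product~$\la\ga,\ga,\gb\ra$ on~$M$ to an honest cup product on a finite cover, and arranging that~$\la\ga,\ga,\gb\ra\gb\ne 0$ on~$M$ implies the relevant triple cup product is nonzero on~$\ov M$. One has to be careful that the transfer map in cohomology is not injective in general, so I would instead argue directly at the cochain level on the cover~$\ov M$, where~$\ga$ pulls back to~$2\ga''$ for some integral class~$\ga''$ (after choosing the cover corresponding to~$2\Z\times\Z$), whence~$\ov a = 2\ov a''$ in cochains makes~$\ov a\cup\ov a = 4\,\ov a''\cup\ov a''$ and, crucially, the coboundary~$\ov x$ descends to an actual class~$2\ov x''$; then the Massey cocycle~$\ov m$ becomes cohomologous to a multiple of~$\ov a''\cup\ov b$, which is a genuine cup product. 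Verifying that the numerical coefficients don't vanish (they won't, since we are over~$\Z$ or~$\Q$ and the covers only multiply by powers of~$2$) and that~$\ov\ga''\cup\ov\gb\cup\ov\gb\ne 0$ follows from~$\la\ga,\ga,\gb\ra\gb\ne 0$ is where the real work lies; alternatively one can phrase the whole argument functorially via the classifying map~$M\to T^2$ and a degree-$2$ self-covering~$T^2\to T^2$ that kills the obstruction to lifting, which may be the cleanest route and is the one I would try to write up.
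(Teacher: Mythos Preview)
Your approach has a genuine gap in the second step, and it diverges from the route the paper takes.

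A small slip first: the Massey product $\langle\alpha,\alpha,\beta\rangle$ lives in $H^{2}(M)$, not $H^{1}(M)$; your cocycle $m=x\cup b - a\cup y$ has degree~$2$. This is consistent with your later wish that it equal $\overline\alpha'\cup\overline\beta$ on a cover, but the ``$\in H^1(M)$'' is wrong.

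The substantive problem is the claim that a \emph{finite abelian} cover converts $\langle\alpha,\alpha,\beta\rangle$ into a cup product. Your cochain sketch does not do this. On the double cover $p:\widehat M\to M$ associated to $\alpha\bmod 2$ one indeed has $p^*\alpha=2\alpha''$ integrally, but then $2(\alpha''\cup p^*\beta)=p^*(\alpha\cup\beta)=0$ only says $\alpha''\cup p^*\beta$ is $2$-torsion; nothing identifies it with $p^*\langle\alpha,\alpha,\beta\rangle$, and nothing prevents it from vanishing. Your manipulation of $x$ is beside the point: the relation that matters is $a\cup b=\delta y$, and from $2(\alpha''\cup p^*b)=\delta(p^*y)$ you cannot conclude that $\alpha''\cup p^*b$ is exact, nor that some ``half of $y$'' becomes a closed form. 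The general mechanism that turns triple Massey products into primary products is a map to a two-stage nilpotent space (e.g.\ the Heisenberg nilmanifold) or passage to an \emph{infinite} cover, not a finite abelian one. Further obstacles: $b_1(\widehat M)$ need not equal~$2$, so \propref{p:fm} is not directly available on $\widehat M$; and the push-down of a stable-systolic inequality through a finite cover requires more care than ``systoles go up'' (one must track which homology classes survive).

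The paper proceeds quite differently. It first dualizes the hypothesis: a short lemma shows that $\langle\alpha,\alpha,\beta\rangle\cup\beta\in H^3(M)$ is Poincar\'e dual, up to sign, to the \emph{self-linking class} $\ell_M(F_M,F_M)\in H_{n-3}(M)$ of a typical Abel--Jacobi fiber, defined (since $[F_M]=0$) by choosing an $(n-1)$-chain $X$ with $\partial X=F_M$ and intersecting with a parallel fiber $F'$. The heart of the argument is then Proposition~\ref{propal}, which works on the \emph{maximal free abelian} (hence infinite) cover $\manbar\to M$: if the lifted fiber class $\fmanifold$ were zero in $H_{n-2}(\manbar)$, one could bound it by a compact hypersurface $\Sigma\subset\manbar$, intersect with the preimage $S$ of a ray in~$\R^2$ avoiding the relevant lattice points, and compute that $\ell_M(F_x,F_y)=\sum_{g}\partial(g.\Sigma\cap S)\sim 0$. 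Contrapositively, $\ell_M(F_M,F_M)\ne 0$ forces $\fmanifold\ne 0$, and then Gromov's Theorem~\ref{61} gives the length-$3$ systolic inequality for the partition $n=1+1+(n-2)$ directly on~$M$, with no finite cover and no push-down needed. So your instinct to pass to a cover was sound, but the right cover is the infinite $\Z^2$-cover, and the right tool is the geometric self-linking interpretation of the Massey product rather than an attempted algebraic reduction to a cup product.
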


Note that also~$\cat M \ge 3$ if~$\la\ga,\ga,\gb\ra\ne 0$, since~$\la
\ga, \ga, \gb \ra$ has category weight~$2$~\cite{R2}.

To prove the theorem, we reformulate it in the dual homology language.

\begin{definition}
 Let~$F=F_M \subset M$ be an oriented typical fiber.  Assume~$[F]=0\in
H_{n-2}(M)$.  Choose an~$(n-2)$-chain~$X$ with~$\partial X = F$.
Consider another regular fiber~$F'\subset M$.  The oriented
intersection~$X\cap F'$ defines a class
\[
\ell_M(F_M, F_M) \subset H_{n-3}(M),
\]
which will be referred to as the {\em self-linking class\/} of a
typical fiber of~$\AJ_M$.
\end{definition}

The following lemma asserts, in particular, that the self-linking
class is well-defined, at least up to sign.

\begin{lemma}
The class~$\ell_M(F_M, F_M)$ is dual, up to sign, to the cohomology
class~$\la \alpha,\alpha,\beta \ra \cup\beta \in H^3(M)$.
\end{lemma}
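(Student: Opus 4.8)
The plan is to unwind both sides of the claimed duality into the language of chains on~$M$ and its universal (or maximal free abelian) cover, and to match the geometric intersection-theoretic description of~$\ell_M(F_M,F_M)$ with a Čech/cochain-level representative of the Massey product~$\la\alpha,\alpha,\beta\ra\cup\beta$. Recall that~$F=F_M$ is Poincaré dual to~$\alpha\cup\beta$, and since~$[F]=0$ we have~$\alpha\cup\beta=0$ in~$H^2(M)$; thus there is a~$1$-cochain~$x$ with~$\delta x = a\cup b$ (where~$a,b$ are cocycle representatives of~$\alpha,\beta$), and the chain~$X$ with~$\partial X=F$ is exactly the Poincaré dual of~$x$ (this is the content of the observation preceding Theorem~\ref{t:massey}, applied at chain level). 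First I would fix cocycle representatives~$a$ of~$\alpha$ and~$b$ of~$\beta$ and a cochain~$s$ with~$\delta s = a\cup a$; then by definition~$\la\alpha,\alpha,\beta\ra$ is represented by~$a\cup y + (-1)^{?}s\cup b$ for an appropriate choice, and the Massey product has zero indeterminacy because~$H^1(M)\cup H^1(M)$ is spanned by~$\alpha\cup\alpha,\alpha\cup\beta,\beta\cup\beta$ and the relevant indeterminacy subgroup vanishes (here one uses~$b_1=2$ together with~$\alpha\cup\beta=0$, just as stated in the text).

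\textbf{Key steps.} The argument proceeds in three stages. (1) \emph{Poincaré-dualize everything.} Under the cap-product isomorphism~$\Delta\colon H^j(M)\to H_{n-j}(M)$ of Theorem~\ref{th:PD} (with trivial~$\Z$ coefficients, using orientability), the class~$\alpha\cup\beta$ goes to~$[F]$, the bounding cochain~$x$ with~$\delta x=a\cup b$ dualizes to the bounding chain~$X$ with~$\partial X=F$, and~$\beta$ dualizes to the class of a regular fiber~$F'$ of the projection to the second circle factor (or more precisely to a codimension-one cycle Poincaré dual to~$\beta$; intersecting with~$F'$ realizes cap product with~$\beta$ geometrically). (2) \emph{Identify the triple product geometrically.} Cap product with the Massey cochain~$a\cup x \pm s\cup b$ corresponds, after dualization, to the operation: take~$F$, cap with a representative of~$\alpha$ to get an~$(n-3)$-cycle, but since this is a \emph{secondary} operation one must instead form the intersection of~$X$ (the chosen null-homology of~$F$) with a Poincaré-dual cycle of~$\alpha$ — and this is precisely~$X\cap F''$ where~$F''$ represents~$\alpha$; then a further cap with~$\beta$ intersects with~$F'$. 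One checks the two terms of the Massey cocycle account respectively for the~$X\cap(\cdot)$ part and a correction term coming from~$\delta s=a\cup a$, and that the correction is exact (hence invisible in homology). (3) \emph{Match with the definition of~$\ell_M$.} The definition of~$\ell_M(F_M,F_M)$ takes~$X\cap F'$ for a second regular fiber~$F'$; since~$F_M$ itself is dual to~$\alpha\cup\beta$, a regular fiber simultaneously plays the role of a~$\beta$-cycle and an~$\alpha$-cycle up to intersection with lower strata, and~$X\cap F'$ lands in~$H_{n-3}(M)$ dual to~$\la\alpha,\alpha,\beta\ra\cup\beta$ after one verifies the bookkeeping of which cup factor is ``used up'' by the nullhomology. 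The sign ambiguity in the statement absorbs the choices of orientations of fibers and of the representative cochains~$x,s$.

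\textbf{Main obstacle.} The hard part will be the precise chain-level bookkeeping in step (2): keeping track of exactly which~$1$-cocycle gets ``resolved'' by the nullhomology~$X$ versus which survives as an honest intersection, and confirming that the extra term~$s\cup b$ in the Massey representative corresponds under Poincaré duality to a genuinely exact chain rather than contributing to~$\ell_M$. Equivalently, one must show that the geometric self-linking construction — bound~$F$, intersect with~$F'$ — is literally computing the secondary cohomology operation~$\la\alpha,\alpha,\beta\ra\cup\beta$ and not merely something in the same indeterminacy coset; this is where the vanishing of indeterminacy (guaranteed by~$b_1(M)=2$ and~$\alpha\cup\beta=0$) is essential, since it means the answer is an honest invariant and the matching of representatives need only be checked up to coboundary. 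A clean way to organize this is to pass to the mapping torus / pullback of the universal cover of~$\T^2$ under~$\AJ_M$ and run the whole computation equivariantly there, where~$F$, its bounding chain, and the second fiber all have canonical lifts, and the Massey product becomes an explicit coset computation in the equivariant cochain complex; the descent back to~$M$ is then automatic.
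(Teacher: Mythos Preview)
Your overall framework---Poincar\'e duality converts Massey products into secondary intersection/linking operations---is the right one, and the paper argues along the same lines. But your execution is more complicated than needed and contains a confusion that your ``main obstacle'' paragraph does not actually resolve.

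The paper's argument is short and entirely geometric. Rather than manipulating abstract cochains $a,b,s,x$, it realizes $\alpha,\beta$ as hypersurfaces $A=\AJ_M^{-1}(u)$ and $B=\AJ_M^{-1}(v)$ for generating loops $u,v$ of $\T^2$, so that $F_M=A\cap B$. The key simplification you miss: choosing a \emph{parallel} loop $u'$ disjoint from $u$ gives a second representative $A'=\AJ_M^{-1}(u')$ of the same class with $A\cap A'=\emptyset$. At the intersection level this amounts to taking $a\cup a=0$, so your auxiliary cochain $s$ is zero and the ``correction term $s\cup b$'' you worry about never appears. The Massey product is then represented by $a\cup x$ alone, Poincar\'e dual to $A'\cap X$; this is a cycle since $\partial(A'\cap X)=A'\cap\partial X=A'\cap A\cap B=\emptyset$. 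Cupping with $\beta$ is intersecting with $B'$, and $(A'\cap X)\cap B'=(A'\cap B')\cap X=F'\cap X=\ell_M(F_M,F_M)$.

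Your step~(3) is where the muddle shows. You write that ``a regular fiber simultaneously plays the role of a $\beta$-cycle and an $\alpha$-cycle'': it does not. A regular fiber $F'$ is Poincar\'e dual to $\alpha\cup\beta$, not to either factor; the point is rather that $F'$ \emph{factors} as the transverse intersection $A'\cap B'$ of hypersurfaces dual to $\alpha$ and $\beta$ separately. Without this factorization you cannot identify $X\cap F'$ with (dual of $\la\alpha,\alpha,\beta\ra$)$\,\cap\,$(dual of $\beta$), which is why your proposal leaves the identification as an unresolved obstacle. You also use the symbol $F'$ inconsistently---sometimes for a codimension-one cycle dual to $\beta$, sometimes for a codimension-two fiber of $\AJ_M$---and this is precisely the distinction the proof hinges on. The equivariant route you propose at the end would work but is unnecessary: once you have $A,A',B,B'$ the whole computation happens in~$M$.
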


\begin{proof}
The classes~$\alpha, \beta$ are Poincar\'e dual to hypersurfaces~$A, B
\subset M$ obtained as the inverse images under~$\AJ_M$ of a
pair~$\{u,v\}$ of loops defining a generating set for~$\T^2$. Clearly,
the intersection~$A\cap B \subset M$ is a typical fiber
\[
F_M=A\cap B
\]
of the Abel-Jacobi map (namely, inverse image of the point~$u\cap v\in
\T^2$). Then another regular fiber~$F'$ can be represented as~$A'\cap
B'$ where, say, the set~$A'$ is the inverse image of a loop~$u'$
``parallel" to~$u$. Then~$A'\cap X$ is a cycle, since~$\partial
(A'\cap X)=A'\cap A \cap B=\emptyset$. Moreover, it is easy to see
that the homology class~$[A'\cap X]$ is dual to the Massey
product~$\la \ga,\ga, \gb\ra$. (We take a representative~$a$ of~$\ga$
such that~$a\cup a=0$.) Now, since~$F'=A'\cap B'$, we conclude
that~$[F'\cap X]$ is dual, up to sign, to~$\la \ga,\ga,\gb\ra\cup
\gb$.
\end{proof}

\begin{remark}
In the case of~$3$-manifolds with Betti number~$2$, the non-vanishing
of the self-linking number is equivalent to the non-vanishing of
C.~Lescop's generalization~$\lambda$ of the Casson-Walker invariant,
cf.~\cite{Les}.  See T.~Cochran and J.~Masters~\cite{CM} for
generalizations.
\end{remark}

\m Now \theoref{t:massey} will follow from \theoref{t:self} below.

\begin{theorem}\label{t:self}
If the self-linking class in~$H_{n-3}(M)$ is non-trivial,
then~$\syscat(M)\geq 3$.
\end{theorem}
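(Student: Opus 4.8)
The plan is to show that non-triviality of the self-linking class $\ell=\ell_M(F_M,F_M)\in H_{n-3}(M;\Q)$ forces, for every Riemannian metric $\gmetric$ on $M$, a scale-invariant systolic inequality \eqref{dd} of length~$3$ associated with the partition $n=1+1+(n-2)$ of \eqref{2.1}; hence $\syscat(M)\ge 3$. This mirrors \propref{p:fm}, with $\ell$ replacing the fiber class; the new ingredient is Gromov's ``linking'' refinement of the Abel--Jacobi construction of \cite{Gr1}, as developed in \cite{IK} and reinterpreted in \cite{Les, KL}. Fix $\gmetric$.

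First I would pass to the maximal free abelian cover $p\colon\bar M\to M$, with deck group $\Z^2=H_1(M;\Z)/\Tors$, and lift the Abel--Jacobi map to a proper map $\overline{\AJ}\colon\bar M\to\R^2$. By the lemma preceding the theorem, the hypothesis is equivalent to $\la\ga,\ga,\gb\ra\cup\gb\ne 0$ in $H^3(M;\Q)$; in particular $\ga\cup\gb=0$, so a typical fiber $F=F_M$ is null-homologous in $M$ and bounds an $(n-1)$-chain. Since $\pi_1(F)$ maps trivially to $H_1(M;\Z)/\Tors$, a regular fiber lifts to a diffeomorphic copy $\bar F=\overline{\AJ}^{-1}(\tilde t)\subset\bar M$, and a second regular fiber $F'$ near $F$ lifts to a disjoint copy $\bar F'$. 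The key claim, which is where the present hypothesis genuinely goes beyond the hypothesis $[F_M]\ne 0$ of \propref{p:fm}, is that $\la\ga,\ga,\gb\ra\cup\gb\ne 0$ forces $[\bar F]\ne 0$ in $H_{n-2}(\bar M;\Q)$. Granting this for now:

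By \theoref{61} applied with $b=b_1(M)=2$, the infimum of areas of $(n-2)$-cycles representing $[\bar F]$ in $\bar M$ is at least the $(n-2)$-systole $\sysh_{n-2}(M,\gmetric;\Z[\Z^2])$, so
\[
\vol_n(M,\gmetric)\ \ge\ c(n)\,\vol(\T^2)\cdot\sysh_{n-2}\big(M,\gmetric;\Z[\Z^2]\big).
\]
Combining with $\vol(\T^2)\ge c\,\stsys_1(M,\gmetric)^2\ge c\,\sys_1(M,\gmetric)^2$ and $\sysh_{n-2}(M,\gmetric;\Z[\Z^2])\ge\sys_{n-2}(M,\gmetric)$ (from \defref{21b}; for $n=3$ one uses instead that a shortest homologically non-trivial loop in $\bar M$ is $\pi_1$-non-trivial, hence projects to a non-contractible loop of the same length in $M$, giving $\sysh_1(\bar M)\ge\sys_1(M)$), we obtain
\[
\sys_1(M,\gmetric)^2\,\sys_{n-2}(M,\gmetric)\ \le\ C(M)\,\vol_n(M,\gmetric)
\]
for all $\gmetric$. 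This is the inequality \eqref{dd} for the partition $n=1+1+(n-2)$, so $\syscat(M)\ge 3$; and since $\ell$ is Poincar\'e dual to $\la\ga,\ga,\gb\ra\cup\gb$, this simultaneously proves \theoref{t:massey}.

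The main obstacle is the key claim of the second paragraph: extracting $[\bar F]\ne 0$ in $H_{n-2}(\bar M;\Q)$ from $\la\ga,\ga,\gb\ra\cup\gb\ne 0$. Concretely one must organize Gromov's coarea argument so that, for a positive-measure set of values $t\in\T^2$, the short fiber $F_t$ admits a bounding chain $X_t$ of controlled size whose intersection $X_t\cap F_{t'}$ with a nearby fiber pushes to a non-zero multiple of $\ell$ in $H_{n-3}(M;\Q)$, which prevents $\bar F$ from bounding over $\Q$ in the cover. This is precisely the mechanism of \cite{Gr1} as elaborated in \cite{IK} and reinterpreted in \cite{Les, KL} through Lescop's generalized Casson--Walker invariant; the algebraic input that makes it run is the lemma identifying $\ell$ with the Poincar\'e dual of $\la\ga,\ga,\gb\ra\cup\gb$. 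Everything else is bookkeeping with the definitions of \secref{upbound} and the comparison of systolic invariants on $M$ with those on its covers.
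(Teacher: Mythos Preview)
Your overall architecture is exactly the paper's: reduce to showing that non-trivial self-linking forces the lifted fiber class $[\bar F]\in H_{n-2}(\manbar)$ to be nonzero, then invoke \theoref{61} for the partition $n=1+1+(n-2)$. The deduction of $\syscat(M)\ge 3$ from $[\bar F]\ne 0$ via \theoref{61} and the stable-norm lower bound on $\vol(\T^2)$ is fine and matches the paper's concluding paragraph.

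The gap is in your justification of the key claim. You describe it as needing to ``organize Gromov's coarea argument'' with ``short fibers'' admitting ``bounding chains of controlled size,'' and you defer to \cite{Gr1, IK, Les, KL}. This is misleading on two counts. First, the implication ``self-linking nonzero $\Rightarrow [\bar F]\ne 0$'' is a purely topological fact, with no metric or coarea input whatsoever; the coarea machinery belongs entirely to the proof of \theoref{61}, which you are invoking as a black box anyway. Second, your description does not actually supply a mechanism: you restate that $X_t\cap F_{t'}$ represents $\ell\ne 0$ (the hypothesis) and then simply assert that this ``prevents $\bar F$ from bounding'' (the conclusion). The cited references contain the $3$-manifold case only.

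The paper proves the key claim (\propref{propal}) by a short contrapositive, credited to A.~Marin via \cite{KL}. Suppose $\bar F_x$ bounds a \emph{compact} chain $\surface$ in $\manbar$. Choose a ray $r_y\subset\R^2$ from $y$ to infinity missing every $\Z^2$-translate of $x$, and set $S=\overline{\AJ}_M^{-1}(r_y)$, a properly embedded hypersurface with $\partial S=\bar F_y$ and $S\cap g.\bar F_x=\emptyset$ for all deck transformations $g$. Then the self-linking downstairs is
\[
\ell_M(F_x,F_y)=p(\surface)\cap F_y=\sum_{g\in\Z^2} g.\surface\cap\bar F_y=\sum_{g\in\Z^2}\partial\bigl(g.\surface\cap S\bigr)\sim 0,
\]
each intersection $g.\surface\cap S$ being compact because $\surface$ is. Replace your final paragraph by this argument and the proof is complete and coincides with the paper's.
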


The theorem is immediate from the proposition below.  If the fiber
class in~$M$ of the Abel--Jacobi map vanishes, one can define the
self-linking of a typical fiber, and proceed as follows.

\begin{prop}
\label{propal}
The non-vanishing of the self-linking of a typical fiber
$\AJ_M^{-1}(p)$ of~$\AJ_M: M \to \T^2$ is a sufficient condition for
the non-vanishing of the fiber class~$\fmanifold$ in the maximal free
abelian cover~$\manbar$ of~$M$.
\end{prop}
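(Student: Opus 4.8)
The plan is to relate the self-linking class of a typical fiber in $M$ to the homology class of the lifted fiber $\fmanifold$ in the maximal free abelian cover $\manbar$, and show that non-vanishing of the former forces non-vanishing of the latter. The maximal free abelian cover $\manbar \to M$ has deck group $H_1(M;\Z)/\text{torsion} \cong \Z^2$, and the Abel--Jacobi map $\AJ_M$ factors through a map $\manbar \to \R^2$ lifting $\AJ_M$, with the $\Z^2$-action on $\manbar$ covering the translation action on $\R^2 = \widetilde{\T^2}$. A typical fiber $F_M = \AJ_M^{-1}(p)$ lifts to a copy $\widetilde{F}$ sitting over a single preimage $\tilde p \in \R^2$; the class $\fmanifold \in H_{n-2}(\manbar)$ is (the image of) $[\widetilde{F}]$.

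First I would argue the contrapositive: assume $\fmanifold = 0$ in $H_{n-2}(\manbar)$, and deduce that the self-linking class vanishes. If $[\widetilde F]=0$ then there is an $(n-1)$-chain $\widetilde{Y}$ in $\manbar$ with $\partial \widetilde{Y} = \widetilde{F}$. Pushing $\widetilde Y$ down to $M$ gives an $(n-1)$-chain $Y$ bounding $F_M$ — but this is the wrong dimension for self-linking, which uses an $(n-2)$-chain $X$ with $\partial X = F_M$. The point is rather to use the $\Z^2$-structure: write the downstairs chain $X$ (with $\partial X = F_M$) as coming from the fact that $[F_M]=0$ in $H_{n-2}(M)$, and then observe that the intersection $X \cap F'$ with a second regular fiber $F'$ lifts to an intersection in $\manbar$ of a lift of $X$ with lifts of $F'$ indexed by $\Z^2$. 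The key observation is that $F'$ lies over a point $p' \ne p$, so its various $\Z^2$-translated lifts sit over the orbit of $\tilde p'$; only finitely many (in fact, the combinatorics of which lattice translates are ``crossed'' by a lift of $X$) contribute, and non-triviality of $X \cap F'$ downstairs is detected by one of these lifted intersections being non-zero in $\manbar$. Conversely, if the lifted fiber $\widetilde F$ already bounds in $\manbar$, I can use the bounding chain, made equivariant by the appropriate averaging or telescoping over the lattice, to produce a bounding chain for the relevant lifted configuration, forcing the self-linking intersection to be nullhomologous.

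More concretely, I would set up the maximal free abelian cover as the pullback of the universal cover $\R^2 \to \T^2$ along $\AJ_M$, pick the lift $\widetilde{\AJ} \colon \manbar \to \R^2$, and track fibers: $\widetilde{F} = \widetilde{\AJ}^{-1}(\tilde p)$ for a chosen lift $\tilde p$ of $p$. Then I would express the self-linking pairing $\ell_M(F_M, F_M)$ via the preceding lemma as Poincar\'e dual to $\langle \alpha, \alpha, \beta\rangle \cup \beta$, and also compute the analogous Massey-product quantity on $\manbar$, where $\widetilde\alpha, \widetilde\beta$ are pullbacks of $\alpha,\beta$. Since $\alpha, \beta$ pull back to \emph{trivial} classes in $H^1(\manbar)$ (they become coboundaries of the coordinate functions on $\R^2$), the Massey products upstairs live in a setting where the relevant cup products vanish on the nose, and the secondary operation becomes, essentially, the homology class of the lifted fiber. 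Thus $[\widetilde F] = 0$ in $H_{n-2}(\manbar)$ would force the whole self-linking quantity to vanish downstairs. I would conclude by contraposition that non-vanishing of the self-linking class implies $\fmanifold \ne 0$ in $H_{n-2}(\manbar)$, which is exactly the assertion of \propref{propal}; combined with \theoref{61} and \propref{p:fm} this yields \theoref{t:self}, hence \theoref{t:massey}.

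The main obstacle will be the careful bookkeeping of how chains, fibers, and their intersections lift to $\manbar$ and interact with the $\Z^2$-action — in particular verifying that the downstairs intersection number $X \cap F'$ is faithfully recorded by a single (or finite combination of) lifted intersection class(es) in $H_{n-3}(\manbar)$, rather than being ``spread out'' and cancelled across the infinitely many lattice translates. Getting the signs and the choice of which lifts of $F'$ to use right, and making sure the bounding chain for $\widetilde F$ can be used locally enough to kill the relevant lifted intersection without needing equivariance, is where the real work lies. A clean way to handle this is to work with the $\Z[\Z^2]$-module structure on $H_*(\manbar)$ and note that the self-linking intersection corresponds to evaluating a certain module element at the augmentation, so that $\fmanifold = 0$ (which kills the generating module element) kills the evaluation too.
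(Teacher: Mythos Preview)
Your setup is correct --- contrapositive, pullback diagram, lifted fiber over a single point --- and you have correctly located where the difficulty lies: controlling the intersection $X\cap F'$ after lifting to $\manbar$ and summing over the $\Z^2$-orbit.  But you do not actually resolve that difficulty; the passages about ``averaging or telescoping'', about Massey products becoming ``essentially the homology class of the lifted fiber'', and about evaluating a $\Z[\Z^2]$-module element at the augmentation are all suggestions rather than arguments, and none of them, as stated, forces the infinite family of lifted intersections to organize into a boundary.

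The paper's proof supplies exactly the missing geometric idea.  Rather than lifting the downstairs chain $X$, one works entirely upstairs: assuming $\fmanifold=0$, choose a \emph{compact} chain $\Sigma\subset\manbar$ with $\partial\Sigma=\ov F_x$.  The second fiber $\ov F_y$ is bounded in $\manbar$ not by anything compact, but by the preimage $S=\overline{\AJ}_M^{-1}(r_y)$ of a properly embedded ray $r_y\subset\R^2$ from $y$ to infinity chosen to avoid $x$ and all its lattice translates.  This choice guarantees $S\cap g.\ov F_x=\emptyset$ for every $g\in\Z^2$, so each $g.\Sigma\cap S$ is a compact chain with $\partial(g.\Sigma\cap S)=g.\Sigma\cap\ov F_y$.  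Then
\[
\ell_M(F_x,F_y)=p(\Sigma)\cap F_y=\sum_{g}g.\Sigma\cap\ov F_y=\sum_{g}\partial(g.\Sigma\cap S)\sim 0,
\]
where the sum is finite because $\Sigma$ is compact.  The ray-to-infinity construction is the device that converts the worrisome infinite bookkeeping into a finite sum of boundaries; without it (or an equivalent trick) your outline does not close.
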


\begin{proof}
The argument is modeled on the one found in \cite{KL} in the case of
3-manifolds, and due to A. Marin (see also \cite[p.~165-166]{SGT}).

Consider the pullback diagram
\[
\CD \manbar @>\ov{\AJ}_M >> \R^2\\ @VpVV @VVV\\ M @>\AJ_M >> \T^2
\endCD
\]
where~$\AJ_M$ is the Abel--Jacobi map and the right-hand map is the
universal cover of the torus.  Take~$x,y\in \R^2$.  Let~$\ov
F_x=\overline{\AJ}_M^{-1}(x)$ and~$\ov F_y= \overline{\AJ}_M^{-1}(y)$
be lifts of the corresponding fibers~$F_x, F_y \subset M$.  Choose a
properly imbedded ray~$r_y\subset \R^2$ joining the point~$y\in \R^2$
to infinity while avoiding~$x$ (as well as its~$\Z^2$-translates), and
consider the complete hypersurface
\[
S = \overline{\AJ}^{-1}(r_y) \subset \manbar
\]
with~$\partial S = \ov F_y$.  We have~$S\cap g. \ov F_x = \emptyset$
for all~$g\in G$, where~$G=\Z^2$ denotes the group of deck
transformations of the covering~$p: \manbar \to M$.

We will prove the contrapositive.  Namely, the vanishing of the class
of the lift of the fiber implies the vanishing of the self-linking
class.  If the surface~$\ov F_x$ is zero-homologous in~$\manbar$, we
can choose a compact hypersurface~$\surface \subset \manbar$ with
\[
\partial \surface = \ov F_x.
\]
The linking~$\ell_M(F_x, F_y)$ in~$M$ can therefore be computed as the
oriented intersection
\begin{equation}
\label{1321}
\begin{aligned}
\ell_M(F_x, F_y) & = p(\surface) \cap F_y \\ & = \sum _{g\in
G} g. \surface \cap \ov F_y \\ & = \sum _{g\in G} \partial \left(
g. \surface \cap S \right) \\ & \sim 0,
\end{aligned}
\end{equation}
where all sums and intersections are oriented, and the intersection
surface~$\surface \cap S$ is compact by construction.
\end{proof}

To summarize, if the lift of a typical fiber to the maximal free
abelian cover of~$M^n$ with~$b_1(M)=2$ defines a nonzero class, then
one obtains the lower bound~$\syscat(M)\geq 3$, due to the existence
of a suitable systolic inequality corresponding to the partition
\[
n=1+1+(n-2)
\]
as in \eqref{2.1}, by applying Gromov's Theorem~\ref{61}.

\end{document}